\newtheorem{theorem}{Theorem}[section]
\newtheorem{proposition}[theorem]{Proposition}
\newtheorem{lemma}[theorem]{Lemma}
\theoremstyle{definition}
\newtheorem{remark}{Remark}
\newtheorem{example}{Example}
\newtheorem{question}{Question}
\def\R{\mathbb{R}}
\def\Z{\mathbb{Z}}
\def\F{\mathcal{F}}
\def\x{\boldsymbol{x}}
\def\HH{\mathcal{H}}
\def\K{\mathsf{K}}
\def\L{\mathsf{L}}
\def\B{\mathsf{B}}
\def\N{\mathsf{N}}
\renewcommand{\geq}{\geqslant}
\renewcommand{\leq}{\leqslant}
\renewcommand{\preceq}{\preccurlyeq}
\renewcommand{\succeq}{\succcurlyeq}
\newcommand{\uleq}{\mathbin{\rotatebox[origin=c]{45}{$\leq$}}}
\newcommand{\dleq}{\mathbin{\rotatebox[origin=c]{-45}{$\leq$}}}
\def\Hom{\operatorname{Hom}}
\def\susp{\operatorname{susp}}
\def\CN{\operatorname{CN}}
\def\KG{\operatorname{KG}}
\def\SG{\operatorname{SG}}
\def\conn{\operatorname{conn}}
\def\coind{\operatorname{coind}}
\def\ind{\operatorname{ind}}
\def\hind{\operatorname{ch-ind}_{\Z_2}}
\def\Xind{\operatorname{Xind}}
\def\cd{\operatorname{cd}}
\def\zig{\operatorname{zig}}
\def\sd{\operatorname{sd}}
\def\lb{\operatorname{lb}}
\def\param{\operatorname{param}}
\def\Bor{\operatorname{Bor}}
\def\height{\operatorname{height}}
\newcounter{relctr} 
\everydisplay\expandafter{\the\everydisplay\setcounter{relctr}{0}} 
\newcounter{foo}
\newcommand\labelrel[2]{%
  \begingroup
    \refstepcounter{relctr}%
    \stackrel{\textnormal{(\roman{relctr})}}{\mathstrut{#1}}%
    \originallabel{#2}%
  \endgroup
}
\newcommand\labeldiag[1]{
\begin{minipage}{1cm}
        \begin{equation}
        \label{#1}
        \end{equation}
\end{minipage}
 }
    \newcommand{\customlabel}[2]{%
        \protected@write \@auxout {}{%
            \string \newlabel {#1}{{#2}{\thepage}{#2}{#1}{}}%
        }%
        \hypertarget{#1}{#2}%
    }
\title{Box complexes: at the crossroad of \\ graph theory and topology}
\author{Hamid Reza Daneshpajouh}
\address{HR. Daneshpajouh,
School of Mathematical Sciences, University of Nottingham Ningbo China, 199 Taikang
East Road, Ningbo 315100, China}
\email{Hamid-Reza.Daneshpajouh@nottingham.edu.cn}
\author{Fr\'ed\'eric Meunier}
\address{F. Meunier, CERMICS, \'Ecole des Ponts, 77455 Marne-la-Vall\'ee CEDEX, France}
\email{frederic.meunier@enpc.fr}
\begin{document}

\begin{abstract}
Various simplicial complexes can be associated with a graph. Box complexes form an important families of such simplicial complexes and are especially useful for providing lower bounds on the chromatic number of the graph via some of their topological properties. They provide thus a fascinating topic mixing topology and discrete mathematics. This paper is intended to provide an up-do-date survey on box complexes. It is based on classical results and recent findings from the literature, but also establishes new results improving our current understanding of the topic, and identifies several challenging open questions.
\keywords{box complexes, coloring, graph theory, topology}
\end{abstract}

\subjclass[2020]{05C15,	55P10, 68Q17}

\maketitle

\section{Introduction}

Since the 1978 breakthrough paper by Lov\'asz solving the Kneser conjecture~\cite{lovasz1978kneser}, various simplicial complexes associated with graphs have been studied, in relations with other combinatorial problems or in their own right. The search for good topological bounds on the chromatic number of graphs has been a great stimulation in this area, and has been at the origin of an especially prominent family of simplicial complexes, namely that of box complexes. (Simplicial complexes will play a ubiquitous role in this paper; see Matou{\v{s}}ek~\cite[Chapter 1]{matousek2008using} for a gentle introduction to these mathematical objects.) A {\em box complex} associated with a graph is a simplicial complex whose simplices are its (not necessarily induced) complete bipartite subgraphs. This is just a rough definition, especially because we do not explain the status of an empty part, and this gives actually freedom for considering various types of box complexes. The group $\Z_2$ acts freely on box complexes by exchanging the two parts, which gives them interesting features, and which allows the use of elementary results from equivariant topology.

The popularity of box complexes can be explained by the simplicity of their definition, but also for other reasons: they provide among the best topological lower bounds on the chromatic number, their relation with other simplicial complexes is well understood, and they form an intriguing connection between discrete mathematics and topology

The objective of this paper is to form an up-to-date survey of the properties of box-complexes. The diagram of Figure~\ref{fig} is the main object around which our work is organized. It shows how various bounds on the chromatic number, especially topological lower bounds in relation with box complexes, compare: each arrow represents a $\leq$ inequality, from the smaller parameter to the larger. The meaning of the various expressions in the diagram is given later in the paper. However, we already emphasize that we focus only on two kinds of box complexes: $\B(G)$ and $\B_0(G)$. Other box complexes have been considered in the literature but all of them are $\Z_2$-homotopy equivalent to one or the other.

\setcounter{foo}{\theequation}
\setcounter{equation}{0}
\renewcommand*\theequation{\alph{equation}}

\begin{figure}
\begin{tikzpicture}
        \node at (0,-11) (X1) {{$\omega(G)$}};
        \node at (4,-11) (X2) {{$\conn(\B(G))+3$}};
        \node at (2,-9.5) (X3) {{$\coind(\B(G))+2$}};
        \node at (6,-9.5) (X4) {{$\conn(\B_0(G))+2$}};
        \node at (0,-8) (Y1) {{$\cd(\HH)$}};
        \node at (4,-8) (Y2) {{$\coind(\B_0(G))+1$}};
        \node at (10,-8) (Y3) {{$\conn_{\Z_2}(\B(G))+3=\conn_{\Z_2}(\B_0(G))+2$}};
        \node at (6,-6.5) (Z1) {{$\hind(\B(G))+2=\hind(\B_0(G))+1$}};
        \node at (6,-5) (Z2) {{$\ind(\B_0(G))+1$}};
        \node at (2.6,-3.5) (Z3) {{$b(G)$}};
        \node at (6,-3.5) (Z4) {{$\ind(\B(G))+2$}};
        \node at (6,-2) (Z5) {{$\Xind(\Hom(K_2,G))+2$}};
        \node at (6,-0.5) (W1) {{$\zig(G)$}};
        \node at (6,1) (W2) {{$\chi(G)$}};
        \node at (11.5,-7.2) (T) {\textcolor{purple}{\small Borsuk--Ulam boundary}};
       
        \draw[->, thick] (X1) -- (X3);
        \node at (0.85,-9.9) (ar1) {\labeldiag{ar1}};
        \draw[->, thick] (X2) -- (X3);
        \node at (3.6,-9.9) (ar2) {\labeldiag{ar2}};
        \draw[->, thick] (X2) -- (X4);
        \node at (4.85,-9.9) (ar3) {\labeldiag{ar3}};
        \draw[->, thick] (X3) -- (Y2); 
        \node at (2.85,-8.4) (ar4) {\labeldiag{ar4}};
        \draw[ ->, thick] (X4) -- (Y2); 
        \node at (5.6,-8.4) (ar5) {\labeldiag{ar5}};
        \draw[ ->, thick] (X4) -- (Y3);
        \node at (7.85,-8.4) (ar6) {\labeldiag{ar6}};
        \draw[ ->, thick] (Y1) -- (Y2);
        \node at (1.7,-7.55) (ar7) {\labeldiag{ar7}};
        \draw[ ->, thick] (Y2) -- (Z1);
        \node at (4.85,-6.9) (ar8) {\labeldiag{ar8}};
        \draw[->, thick] (Y3) -- (Z1); 
        \stepcounter{equation}
        \node at (8.8,-6.9) (a9) {\labeldiag{ar9}};
       
        \draw [purple, dashed, very thick] (-0.5,-7.35) -- (13.5,-7.35);
       
        \draw[ ->, thick] (Z1) -- (Z2); 
        \node at (5.9,-5.6) (a10) {\labeldiag{ar10}};
        \draw[ ->, thick] (Z2) -- (Z4); 
        \node at (5.9,-4.1) (a11) {\labeldiag{ar11}};
        \draw[ ->, thick] (Z4) -- (Z3); 
        \node at (4.2,-3) (a12) {\labeldiag{ar12}};
        \draw[ ->, thick] (Z4) -- (Z5); 
        \node at (5.8,-2.65) (a13) {\labeldiag{ar13}};
        \draw[ ->, thick] (Z5) -- (W1);
        \node at (5.9,-1.1) (a14) {\labeldiag{ar14}};
        \draw[ ->, thick] (W1) -- (W2); 
        \node at (5.9,0.4) (a15) {\labeldiag{ar15}};
\end{tikzpicture}
\caption{\label{fig}Each arrow represents a $\leq$ inequality, from the smaller parameter to the larger. Apart from the clique number, the parameters below the ``Borsuk--Ulam boundary'' requires the Borsuk--Ulam theorem---or a ``stronger'' statement---to be established as a lower bound on the chromatic number. This is further discussed in Section~\ref{sec:comput}.}
\end{figure}
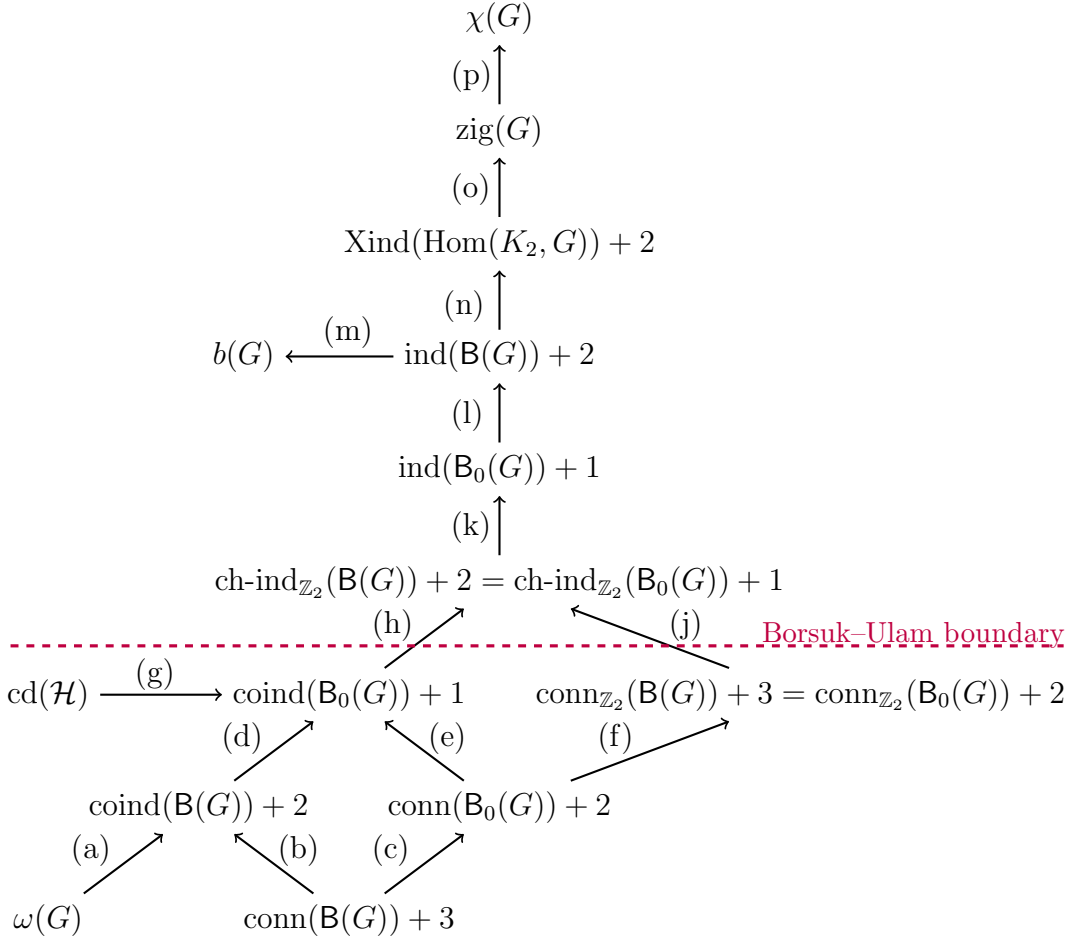

All arrows (and all non-arrows) will thoroughly be discussed. To achieve this, we not only collect results from the literature but also prove new results, such as possible gaps between consecutive bounds in the diagram. For instance, Simony, Tardos, and Vrecica~\cite{simonyi2009local} have shown that the gap in inequality~\eqref{ar4} can be equal to $1$, but they left open the question whether the gap can be larger. We contribute to that question by constructing infinitely many graphs for which the gap is $2$; see Section~\ref{subsec:arrows}, where inequality~\eqref{ar4} is discussed, and Section~\ref{subsec:Briesk} for the topological result used for the construction.

Apart from the improved understanding of the arrows and non-arrows of Figure~\ref{fig}, the paper provides other contributions, like: the decidability of $\conn(\B_0(G))$ (Theorem~\ref{thm:dec_B0}); a complexity result showing the equivalence between the Borsuk--Ulam theorem and the inequality $\coind(\B(G))+2 \leq \chi(G)$ (Theorem~\ref{thm:complex-coind}); a theorem relating the box complex of the join of two graphs to the join of their box complexes (Theorem~\ref{thm:join-B}).

The paper is organized as follows. 

\smallskip

\begin{easylist}\ListProperties(Style1*=\scshape$\star$\hspace{0.1cm}, Style2*=$-$, Hide=2, FinalSpace=2em, Indent=0.5cm, Space1=0.2cm, Space1*=0.2cm, Space2=0.2cm, Space2*=0.2cm)

# Section~\ref{sec:first} deals with the definition of the box complex $\B(G)$ and presents its main properties and its relevance. It is a short section aiming at being a gentle and self-contained introduction to the topic.

# Section~\ref{sec:complex} introduces the other box complex considered in the paper, namely $\B_0(G)$, as well as two other complexes that can be built from graphs, namely the Hom complex and the neighborhood complex. A few topological properties of the box complex $\B(G)$ when $G$ is a Kneser graph, a Schrijver graph, or a chordal graph are given. The first two kinds of graphs are classical ones in the area of topological combinatorics.

# Section~\ref{sec:parameter} provides the definition of various parameters of graphs and complexes. All parameters that appear in Figure~\ref{fig} are in particular defined in that section. The section is divided into three subsections: the first on combinatorial parameters, the second on topological parameters, the third on the comparisons between the parameters attached to simplicial complexes.

# Section~\ref{sec:comput} discusses the computability of the various parameters and their relation with the Borsuk--Ulam theorem. As explained in that section, these two topics are not independent.

# Section~\ref{sec:product} is a short section on (categorical) products of graphs and topological spaces. Even if this section is short, it is an important topic dealing mostly with Hedetniemi's conjecture, a central conjecture in graph theory (now disproved).

# Section~\ref{sec:join} is about joins of graphs and topological spaces. The join operation can also be seen as a kind of product, but which is more flexible and has more applications than the product studied in the previous section.

# Section~\ref{sec:comment} provides a thorough explanation of the diagram of Figure~\ref{fig}. For each arrow, the current knowledge on how large the gap can be is discussed. The non-arrow are also discussed (is an arrow absent because there is no way to order consistently the parameters, or by lack of knowledge?).

# Section~\ref{sec:compl} gathers complementary remarks, and collects the main open questions met in the survey.
\end{easylist}

\medskip

Throughout the paper, all graphs are simple, i.e., they have no parallel edges and no loops, and finite. 

\section*{Acknowledgments}
The authors are grateful to Moishe Kohan~\cite{moishe} for pointing out the existence of the Brieskorn manifolds. They thank Elba Garcia-Failde and Bram Petri for clarifying email messages that helped the writing of the proof of Lemma~\ref{lem:Briesk}, Roman Karasev for pointing out the reference to a theorem by Conner and Floyd in the comment of inequality~\eqref{ar10}, and Anton Dochtermann for clarifications regarding Hom complexes of directed graphs. They also thank Ishay Haviv and G\'abor Simonyi for many helpful comments on the first version of the paper, especially for providing complementary references, correcting a few technical inaccuracies, and spotting some typos.

 Part of this work was done when the first author was at the School of Mathematics of IPM as a guest researcher in Spring 2021 and received support for his research by this institution.

\setcounter{equation}{\thefoo}
\renewcommand*\theequation{\arabic{equation}}

\section{Box complexes: definition, relevance, and main properties}\label{sec:first}

\subsection{Definition} Let $G$ be a graph. For a subset $A\subseteq V(G)$, let
\[
\CN_G(A) = \{v\in V(G) \colon av\in E(G)\; \text{for all $a\in A$}\} \, .
\]
It is the set of \emph{common neighbors} of $A$. When there is no ambiguity, we will write $\CN$ instead of $\CN_G$. Note that $\CN(\varnothing)=V(G)$ and that since $G$ has no loops, we have $\CN(A)\subseteq V\setminus A$. The \emph{box complex} of $G$, denoted by $\B(G)$, is the simplicial complex defined as follows:
\[
\B(G) = \big\{ A' \uplus A'' \colon A',A'' \subseteq V, A' \cap A'' = \varnothing, \; G[A',A''] \text{ is complete}, \;  \CN(A'),\CN(A'') \neq \varnothing \big\} \, .
\]
Its vertex set is the ``signed'' version of $V(G)$. Each vertex $v$ of $G$ becomes two vertices: $+v$ and $-v$. The notation $A' \uplus A''$ means $\{+v\colon v\in A'\} \cup \{-v\colon v\in A''\}$.
The notation $G[A',A'']$ stands for the bipartite graph with parts $A'$ and $A''$ and whose edges are all edges of $G$ with one endpoint in $A'$ and the other in $A''$. Roughly speaking, $\B(G)$ is the simplicial complex formed by all {\em bicliques}, i.e., complete bipartite subgraphs, of $G$. We also count a subset of vertices with at least one common neighbor as a complete bipartite subgraph with no edges (case when $A'$ or $A''$ is empty).

\begin{example}\label{ex:box-complete}
The box complex $\B(K_n)$ of the complete graph with $n$ vertices is the boundary of the $n$-dimensional cross-polytope to which we have removed two opposite facets. This implies in particular that $\B(K_n)$ is homotopy equivalent to the $(n-2)$-dimensional sphere $S^{n-2}$.
\end{example}

\begin{example}\label{ex:box-C2n}
    The box complex $\B(C_4)$ is formed by the disjoint union of two copies of the $3$-dimensional simplex. This implies in particular that $\B(C_4)$ is homotopy equivalent to $S^0$. This is also a special case of complete bipartite graphs dealt with in Example~\ref{ex:box-bipcomplete}.

    For $n\geq 3$, the box complex $\B(C_{2n})$ of the $2n$-cycle is homeomorphic to the disjoint union of two copies of $S^1 \times [0,1]$ (the boundary of each copy being formed by two $n$-cycles).  This implies in particular that $\B(C_{2n})$ is homotopy equivalent to two disjoint copies of $S^1$. See Figure~\ref{fig:BC6} for an illustration of the $6$-cycle and its box complex $\B(C_6)$.
\end{example}

\begin{example}\label{ex:box-C2n+1}
    The box complex $\B(C_{2n+1})$ of the $(2n+1)$-cycle is homeomorphic to $S^1 \times [0,1]$ (whose boundary is formed by two $(2n+1)$-cycles).  This implies in particular that $\B(C_{2n+1})$ is homotopy equivalent to $S^1$.
\end{example}

\begin{figure}
    \centering
    \subcaptionbox{The cycle $C_6$}[0.3\textwidth]{
\begin{tikzpicture}[scale=0.8]
	\node[circle, regular polygon,
	draw,
	regular polygon sides = 6, minimum size = 3cm] (p) at (0,0) {};
 \foreach \x in {1,2,...,6}
  \fill (p.corner \x) circle[radius=2pt];
 \node [left] (5) at (p.west) {\footnotesize $5$};
 \node [right] (2) at (p.east) {\footnotesize $2$};
 \node [above left] (6) at (p.120) {\footnotesize $6$};
 \node [above right] (1) at (p.60) {\footnotesize $1$};
 \node [below left](4) at (p.240) {\footnotesize $4$};
 \node [below right] (3) at (p.300) {\footnotesize $3$};
\end{tikzpicture} 
}
\subcaptionbox{The box complex $\B(C_6)$}[0.6\textwidth]{
\begin{tikzpicture}[scale=1.3]
\node (A) at (1,0,0) {};
\node (B) at (-1,0,0) {};
\node (C) at (0.3,0,1.3) {};
\coordinate (H) at (0,2,0);

\draw($(B)$) -- ($(A)$) -- ($(C)$) -- cycle ;
\draw($(B)+(H)$) -- ($(A)+(H)$) -- ($(C)+(H)$) -- cycle ;
\draw[fill=lightgray,fill opacity=.7] ($(B)$) -- ($(A)+ (H)$) -- ($(B)+ (H)$) -- cycle ;
\draw[fill=lightgray,fill opacity=.7] ($(B)$) -- ($(A)+ (H)$) -- ($(A)$) -- cycle ;
\draw[fill=lightgray,fill opacity=.7] ($(A)$) -- ($(C)+ (H)$) -- ($(A)+ (H)$) -- cycle ;
\draw[fill=lightgray,fill opacity=.7] ($(A)$) -- ($(C)$) -- ($(C)+ (H)$) -- cycle ;
\draw[fill=lightgray,fill opacity=.7] ($(C)$) -- ($(B)+ (H)$) -- ($(C)+ (H)$) -- cycle ;
\draw[fill=lightgray,fill opacity=.7] ($(C)$) -- ($(B)$) -- ($(B)+ (H)$) -- cycle ;

\node [minimum size=0.2cm]  (lA) at (1.3,0,0) {\footnotesize $-1$};
\node (lB) at (-1.3,0,0) {\footnotesize $-5$};
\node (lC) at (0.37,-0.10,1.6) {\footnotesize $-3$};
\node (lAH) at (1.3,2,0) {\footnotesize $+6$};
\node (lBH) at (-1.3,2,0) {\footnotesize $+4$};
\node (lCH) at (1.35,3.23,3.8) {\footnotesize $+2$};

\fill (A) circle[radius=1.2pt];
\fill (B) circle[radius=1.2pt];
\fill (C) circle[radius=1.2pt];
\fill ($(A)+ (H)$) circle[radius=1.2pt];
\fill ($(B)+ (H)$) circle[radius=1.2pt];
\fill ($(C)+ (H)$) circle[radius=1.2pt];
\end{tikzpicture}
\centering
\begin{tikzpicture}[scale=1.3]
\node (A) at (1,0,0) {};
\node (B) at (-1,0,0) {};
\node (C) at (0.3,0,1.3) {};
\coordinate (H) at (0,2,0);

\draw($(B)$) -- ($(A)$) -- ($(C)$) -- cycle ;
\draw($(B)+(H)$) -- ($(A)+(H)$) -- ($(C)+(H)$) -- cycle ;
\draw[fill=lightgray,fill opacity=.7] ($(B)$) -- ($(A)+ (H)$) -- ($(B)+ (H)$) -- cycle ;
\draw[fill=lightgray,fill opacity=.7] ($(B)$) -- ($(A)+ (H)$) -- ($(A)$) -- cycle ;
\draw[fill=lightgray,fill opacity=.7] ($(A)$) -- ($(C)+ (H)$) -- ($(A)+ (H)$) -- cycle ;
\draw[fill=lightgray,fill opacity=.7] ($(A)$) -- ($(C)$) -- ($(C)+ (H)$) -- cycle ;
\draw[fill=lightgray,fill opacity=.7] ($(C)$) -- ($(B)+ (H)$) -- ($(C)+ (H)$) -- cycle ;
\draw[fill=lightgray,fill opacity=.7] ($(C)$) -- ($(B)$) -- ($(B)+ (H)$) -- cycle ;

\node [minimum size=0.2cm]  (lA) at (1.3,0,0) {\footnotesize $-2$};
\node (lB) at (-1.3,0,0) {\footnotesize $-6$};
\node (lC) at (0.37,-0.1,1.6) {\footnotesize $-4$};
\node (lAH) at (1.3,2,0) {\footnotesize $+1$};
\node (lBH) at (-1.3,2,0) {\footnotesize $+5$};
\node (lCH) at (1.35,3.23,3.8) {\footnotesize $+3$};

\fill (A) circle[radius=1.2pt];
\fill (B) circle[radius=1.2pt];
\fill (C) circle[radius=1.2pt];
\fill ($(A)+ (H)$) circle[radius=1.2pt];
\fill ($(B)+ (H)$) circle[radius=1.2pt];
\fill ($(C)+ (H)$) circle[radius=1.2pt];
5\end{tikzpicture}}

\caption{ \label{fig:BC6} The $6$-cycle and its box complex}
\end{figure}
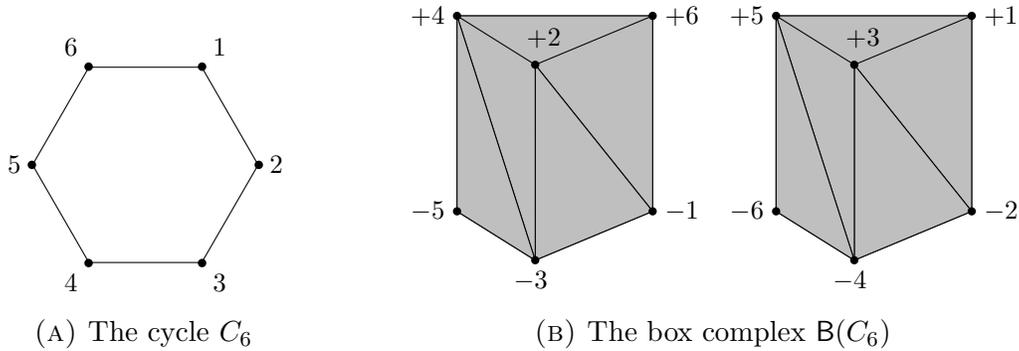

\begin{example}\label{ex:box-bipcomplete}
    The box complex $\B(K_{m,n})$ of the complete bipartite graph with parts of size respectively $m$ and $n$ is formed by two disjoint copies of the $(m+n-1)$-dimensional simplex. This implies in particular that $\B(K_{m,n})$ is homotopy equivalent to $S^0$.
\end{example}

Box complexes have been introduced by Matou\v sek and Ziegler~\cite{matousek2002topological} (motivated by former complexes introduced by Alon, Frankl, and Lov\'asz~\cite{alon1986chromatic} and by K\v r\'i\v z~\cite{kriz1992equivariant}). In Section~\ref{sec:complex}, we will see the other box complex of $G$ considered in the paper, denoted by $\B_0(G)$.

\subsection{Chromatic number and connectivity} The chromatic number is a central notion in graph theory, and likewise for connectivity in topology. It turns out that these two fundamental notions, from two seemingly distant areas of mathematics, are present in a fundamental inequality in the theory of box complexes. Before stating this inequality, we remind the definition of these two fundamental notions.

Coloring graphs is of great importance in discrete mathematics, and beyond (computer science, operations research, etc.). A map $c\colon V(G)\rightarrow [t]$ is usually called a {\em coloring}, the integers in $[t]$ being considered as {\em colors}. When $c(u)\neq c(v)$ for every pair of \emph{adjacent} vertices $u$ and $v$ (the vertices $u$ and $v$ are \emph{adjacent} if $uv\in E(G)$), the coloring is {\em proper}. The minimal $t$ for which there exists a proper coloring is the {\em chromatic number} of $G$ and is denoted by $\chi(G)$. It is a graph parameter that is not easy to compute (it is \NP-hard) and, considering its importance, any bound is welcome.

The {\em connectivity} of a topological space $X$ is a central notion in topology. It is denoted by $\conn(X)$ and is the maximal integer $d$ such that any continuous map $f$ from the $k$-dimensional sphere $S^k$ to $X$ with $k\in\{-1,0,1,\ldots,d\}$ can be extended to a continuous map $\bar f$ from the $(k+1)$-dimensional ball $B^{k+1}$ to $X$. When $\conn(X) \geq d$, we say that $X$ is {\em $d$-connected}. In this context, $S^{-1}$ is interpreted as $\varnothing$ and $B^0$ as a single point. Therefore, $(-1)$-connected means non-empty.

One of the most important facts about the box complex $\B(G)$ is that its connectivity is related to the chromatic number of $G$, making a surprising connection between topology and graph theory:
\begin{equation}\label{eq:bg}
\conn(\B(G))+3 \leq \chi(G)\,.
\end{equation}
The connectivity of a simplicial complex is not easy to compute, but inequality~\eqref{eq:bg} offers a powerful approach to the chromatic number in many situations. There are cases where the connectivity can yet be precisely computed, others where lower bounds are known, etc. An inequality similar to inequality~\eqref{eq:bg} has in particular been used by Lov\'asz to settle Kneser's conjecture. 
Our discussion about the diagram of Figure~\ref{fig} in Section~\ref{sec:comment} will actually provide a proof of inequality~\eqref{eq:bg}, but let us sketch the general idea of the proof. The box complex is a free simplicial $\Z_2$-complex. A \emph{simplicial $\Z_2$-complex} is a simplicial complex on which $\Z_2$ acts. It is \emph{free} if each orbit of its polyhedron (underlying space) is of size two. In the case of the box complex, the action is simply the exchange $A' \uplus A'' \rightarrow A'' \uplus A'$. This makes it amenable to ``equivariant'' topology and especially to tools like the Borsuk--Ulam theorem. This latter theorem states that there is no continuous map $S^d \rightarrow S^{d-1}$ that commutes with the central symmetry. 
When the graph is lifted to its box complex, the coloring is lifted to a continuous map, and it is the kind of obstruction provided by the Borsuk--Ulam theorem that prevents the graph of being colored with too few colors.

\subsection{``Universality'' of the box complex}\label{subsec:univers}

Seeing box complexes as free simplicial $\Z_2$-complexes is very useful. Things go also the other way around, as shown by the following theorem by Csorba (see also the paper by \v Zivaljevi\' c~\cite{vzivaljevic2005wi}). ($\Z_2$-homotopy equivalence, defined hereafter, implies in particular homotopy equivalence.)

\begin{theorem}[Csorba~\cite{csorba2007homotopy}]\label{thm:csorba}
Every free simplicial $\Z_2$-complex is $\Z_2$-homotopy equivalent to the box complex $\B(G)$ of some graph $G$.
\end{theorem}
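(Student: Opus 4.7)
The strategy is to construct the graph $G$ directly from the simplicial data of $K$ and then exhibit a $\Z_2$-equivariant homotopy equivalence between $|\B(G)|$ and $|K|$. The first step is a preliminary reduction: replace $K$ by its barycentric subdivision $\sd K$, which is $\Z_2$-homotopy equivalent to $K$ and enjoys the useful property that the induced $\Z_2$-action has no fixed simplex (freeness of the action on $|K|$ forces $\nu(\sigma) \neq \sigma$ for every simplex, since otherwise its barycenter would be a fixed point). Consequently the vertex set of $\sd K$ admits a $\Z_2$-equivariant bipartition $V(\sd K) = V_+ \sqcup V_-$, and no edge $\{u, \nu(u)\}$ can appear in $\sd K$, because this would force $\sigma \subsetneq \nu(\sigma)$ for the simplex $\sigma$ corresponding to $u$, contradicting $\nu^2 = \mathrm{id}$.

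I would then define the graph $G$ with vertex set $V(G) = V_+$, declaring $uv \in E(G)$ iff $\{u, \nu(v)\} \in \sd K$. Symmetry of the edge relation follows from simpliciality of $\nu$ (applying $\nu$ to $\{u, \nu(v)\}$ gives $\{\nu(u), v\}$), and the observation above guarantees that $G$ is loopless. Under the identifications $+v \leftrightarrow v$ and $-v \leftrightarrow \nu(v)$ (both for $v \in V_+$), the vertex set of $\B(G)$ is placed in $\Z_2$-equivariant bijection with $V(\sd K)$, and the antipodal involution on $\B(G)$ becomes $\nu$.

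The heart of the proof is to show that, under this identification, $\B(G)$ is $\Z_2$-homotopy equivalent to $\sd K$. My plan is to construct an explicit $\Z_2$-equivariant deformation retraction of $|\B(G)|$ onto the image of $|\sd K|$ by systematically collapsing the ``spurious'' bicliques $A' \uplus A''$ for which the set $A' \cup \nu(A'')$ is not itself a simplex of $\sd K$ (even though every transverse pair $\{a', \nu(a'')\}$ is, by construction, an edge of $\sd K$). A discrete-Morse or simplicial-collapse argument organised around a well-chosen $\Z_2$-invariant filtration, for instance by the number of missing internal pairs within $A'$ or within $\nu(A'')$, is the natural tool here.

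The main obstacle is precisely this last step. The box complex of $G$ typically contains many simplices not coming from $\sd K$, and showing that they are topologically inessential \emph{and} in a way compatible with the $\Z_2$-action is the delicate part. A secondary subtlety is the common-neighbourhood non-emptiness condition in the definition of $\B(G)$, which prunes certain bicliques from the complex and which must be tracked carefully so that the collapse neither destroys essential features nor leaves dangling cells. Once this equivariant collapse is in place, $\Z_2$-equivariance of the resulting homotopy equivalence is automatic from the symmetric construction of $G$, and the theorem follows.
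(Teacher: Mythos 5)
Your construction of $G$ differs from Csorba's: the paper describes his graph as having \emph{all} non-empty simplices of $\K$ as vertices (with $\sigma\tau$ an edge when $\sigma\subseteq\nu(\tau)$ or $\tau\subseteq\nu(\sigma)$), whereas you choose one orbit representative per simplex by splitting $V(\sd\K)$ into $V_+\sqcup V_-$ and taking only $V_+$. Your edge relation translated through $\nu$ looks like Csorba's, so the constructions are related, but they are not the same graph, and the rest of the argument cannot be borrowed verbatim from his.

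More importantly, the proposal has a genuine gap that you yourself flag: the entire content of the theorem is the claim that $\B(G)$ $\Z_2$-deformation retracts onto a copy of $\sd\K$, and you state only that a ``discrete-Morse or simplicial-collapse argument organised around a well-chosen $\Z_2$-invariant filtration'' should do it, calling this ``the main obstacle.'' No such filtration is produced, no matching is exhibited, and the claim that the spurious bicliques are ``topologically inessential in a way compatible with the $\Z_2$-action'' is not verified. As stated, this is a plan, not a proof.

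There is a second unresolved problem that precedes the collapse. You want to retract $|\B(G)|$ onto ``the image of $|\sd\K|$,'' but the inclusion $\sd\K\hookrightarrow\B(G)$ is not automatic: a simplex $\sigma\in\sd\K$ with $\sigma\cap V_-=\varnothing$ corresponds to a would-be simplex $A'\uplus\varnothing$ of $\B(G)$, and this belongs to $\B(G)$ only if $\CN_G(A')\neq\varnothing$, i.e., only if there is some $w\in V_+$ with $\{a',\nu(w)\}\in\sd\K$ for every $a'\in A'=\sigma$. Nothing in the construction guarantees such a $w$ exists. Thus the putative retraction may not even have a well-defined target inside $\B(G)$. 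You call the common-neighbour condition a ``secondary subtlety,'' but it is in fact a prerequisite for the whole strategy and needs to be settled before any collapse argument can begin (for instance by first comparing with $\B_0(G)$ and invoking the suspension relation, or by arranging the bipartition so that common neighbours always exist — neither of which is addressed).
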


A \emph{$\Z_2$-map} between two \emph{$\Z_2$-spaces} (topological spaces endowed with a $\Z_2$-action) is a map that commutes with the action. Two continuous $\Z_2$-maps $f$ and $g$ between two $\Z_2$-spaces $X$ and $Y$ are \emph{$\Z_2$-homotopic} if there exists a continuous map $h\colon X\times[0,1] \rightarrow Y$ with $h(\cdot,0)=f(\cdot)$ and $h(\cdot,1)=g(\cdot)$, and such that $h(\cdot,t)$ is a $\Z_2$-map for all $t\in[0,1]$. Note that this definition is the traditional definition of homotopic maps, except that continuous maps are replaced by continuous $\Z_2$-maps. The definition of the $\Z_2$-homotopy equivalence is then the same as the definition of homotopy equivalence with $\Z_2$-homotopy in place of homotopy.

Csorba's proof of Theorem~\ref{thm:csorba} provides an explicit and easy construction of the graph $G$: given a simplicial $\Z_2$-complex $\K$, the vertices of $G$ are the non-empty simplices of $\K$ and $\sigma\tau$ forms an edge if $\sigma$ is a face of the image of $\tau$ by the action (or vice versa). 

\begin{figure}
    \centering
    \subcaptionbox{The square seen as a $1$-dimensional simplicial complex}[0.4\textwidth]{
\begin{tikzpicture}[scale=2]
    \node[circle,fill,inner sep=1pt](a) at (0,0) {}  ;
    \node[circle,fill,inner sep=1pt](b) at (1,0) {}  ;
    \node[circle,fill,inner sep=1pt](c) at (1,1) {}  ;
    \node[circle,fill,inner sep=1pt](d) at (0,1) {}  ;
    \draw (a)--(b)--(c)--(d)--(a);
\end{tikzpicture}}
 \subcaptionbox{Its barycentric subdivision}[0.4\textwidth]{
\begin{tikzpicture}[scale=2]
    \node[circle,fill,inner sep=1pt](a) at (0,0) {}  ;
    \node[circle,fill,inner sep=1pt](ab) at (0.5,0) {}  ;
    \node[circle,fill,inner sep=1pt](b) at (1,0) {}  ;
    \node[circle,fill,inner sep=1pt](bc) at (1,0.5) {}  ;
    \node[circle,fill,inner sep=1pt](c) at (1,1) {}  ;
    \node[circle,fill,inner sep=1pt](cd) at (0.5,1) {}  ;
    \node[circle,fill,inner sep=1pt](d) at (0,1) {}  ;
    \node[circle,fill,inner sep=1pt](da) at (0,0.5) {}  ;
    \draw (a)--(b)--(c)--(d)--(a);
\end{tikzpicture}}
\subcaptionbox{The graph $G$ obtained from Csorba's construction}[0.4\textwidth]{
\begin{tikzpicture}[scale=2]
    \node[circle,fill,inner sep=1pt](a) at (0,0) {}  ;
    \node[circle,fill,inner sep=1pt](ab) at (0.5,0) {}  ;
    \node[circle,fill,inner sep=1pt](b) at (1,0) {}  ;
    \node[circle,fill,inner sep=1pt](bc) at (1,0.5) {}  ;
    \node[circle,fill,inner sep=1pt](c) at (1,1) {}  ;
    \node[circle,fill,inner sep=1pt](cd) at (0.5,1) {}  ;
    \node[circle,fill,inner sep=1pt](d) at (0,1) {}  ;
    \node[circle,fill,inner sep=1pt](da) at (0,0.5) {}  ;
    \draw (ab)--(c);
    \draw (ab)--(d);
    \draw (ab)--(cd);
    \draw (bc)--(a);
    \draw (bc)--(d);
    \draw (bc)--(da);
    \draw (cd)--(a);
    \draw (cd)--(b);
    \draw (cd)--(ab);
    \draw (da)--(c);
    \draw (da)--(b);
    \draw (da)--(bc);
\end{tikzpicture}}
\caption{Illustration of Csorba's construction underlying the proof of Theorem~\ref{thm:csorba}}
\end{figure}
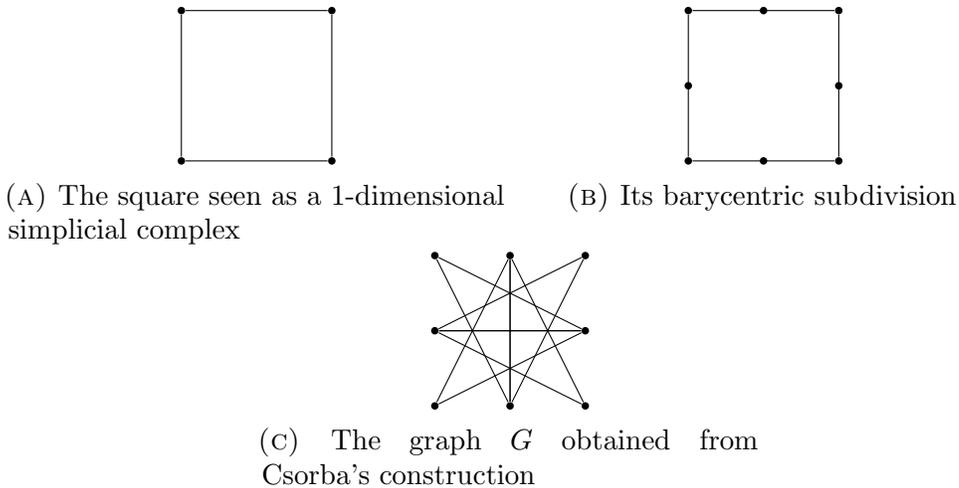

This construction suggests a few questions, which, to the authors' knowledge, have not been addressed yet. For instance, the number of vertices of $G$ in this construction is exponential in the dimension of $\K$. Would it be possible to come up with a construction requiring much less vertices in general? Or with a construction providing a graph with small chromatic number? 
\label{page:csorba}

Concrete uses of this construction seem to be scarce, and the current interest of Theorem~\ref{thm:csorba} is the existence of this graph, independently of the way it is constructed. The main message of this theorem is that there is no hope to achieve a meaningful characterization of box complexes of graphs, since these latter are actually almost as general as all free simplicial $\Z_2$-complexes. (Yet, as noted by Csorba~\cite{csorba2007homotopy}, a version of the theorem with $\Z_2$-homeomorphism does not hold.) Theorem~\ref{thm:csorba} will be used in Section~\ref{sec:comment} to prove that the gap of some inequalities between topological lower bounds can be arbitrarily large.

In the literature, there is also another ``negative'' message, arguably in the same spirit: there is no hope for characterizing the chromatic number via topological property of the box complex (in slightly different terms, this was a question by Lov\'asz in his 1978 paper solving Kneser's conjecture). Indeed, Matsushita~\cite{matsushita2017homotopy} proved that there is no homotopy invariant of the box complex that gives an upper bound for the chromatic number of a graph.

Generalizations of Csorba's construction have been proposed by Dochtermann~\cite{dochtermann2009universality} and by Dochtermann and Schultz~\cite{dochtermann2012topology}.

\section{Further results on box complexes and other relevant complexes} \label{sec:complex}

\subsection{Another box complex}\label{subsec:another} There is another box complex, denoted by $\B_0(G)$, which is probably as popular as $\B(G)$. Its definition goes as follows:
\[
\B_0(G) = \big\{ A' \uplus A'' \colon A',A'' \subseteq V(G), A' \cap A'' = \varnothing, \; G[A',A''] \text{ is complete} \big\} \, .
\]
It contains $\B(G)$. The extra simplices are the bipartite subgraphs with an empty part and whose vertices in the other part do not have a common neighbor in $G$.

\begin{example}\label{ex:box0-complete}
The box complex $\B_0(K_n)$ of the complete graph with $n$ vertices is the boundary of the $n$-dimensional cross-polytope. This implies in particular that $\B_0(K_n)$ is $\Z_2$-homeomorphic to $S^{n-1}$.
\end{example}

\begin{example}\label{ex:box0-C2n}
    The box complex $\B_0(C_4)$ of the $4$-cycle is formed by two pairs of disjoint tetrahedra, somehow arranged into a ``circle,'' where two adjacent tetrahedra share $2$ vertices. This implies in particular that $\B_0(C_4)$ is $\Z_2$-homotopy equivalent to $S^1$. See Figure~\ref{fig:my_label}. This is also a special case of complete bipartite graphs dealt with in Example~\ref{ex:box0-bipcomplete}.

    For $n \geq 3$, the box complex $\B_0(C_{2n})$ of the $2n$-cycle can be described as follows: the box complex $\B(C_{2n})$ is formed by two copies of $S^1 \times [0,1]$ (see Example~\ref{ex:box-C2n}); in $\B_0(C_{2n})$, there is an extra $(2n-1)$-dimensional simplex attached  to one boundary cycle of each copy, and there is a second extra $(2n-1)$-dimensional simplex attached to the two other boundary cycles of the copies. This implies in particular that $\B_0(C_{2n})$ is homotopy equivalent to the wedge of $S^1$ and two copies of $S^2$ (i.e., the topological space obtained by joining them at a single point).
\end{example}

\begin{example}\label{ex:box0-C2n+1}
    The box complex $\B_0(C_{2n+1})$ of the $(2n+1)$-cycle can be described as the box complex $\B(C_{2n+1})$ (see Example~\ref{ex:box-C2n+1}) with two $2n$-dimensional simplices attached to the two $(2n+1)$-cycles. This implies in particular that $\B_0(C_{2n+1})$ is $\Z_2$-homotopy equivalent to $S^2$.
\end{example}

\begin{example}\label{ex:box0-bipcomplete}
    The box complex $\B_0(K_{m,n})$ of the complete bipartite graph with parts of size respectively $m$ and $n$ is formed by two pairs of disjoint $(m+n-1)$-dimensional simplices, somehow arranged into a ``circle,'' where two adjacent $(m+n-1)$-dimensional simplices share $m$ or $n$ vertices. This implies in particular that $\B_0(K_{m,n})$ is $\Z_2$-homotopy equivalent to $S^1$.
\end{example}

Several other definitions of box complexes have been proposed in the literature. In the paper by Matou\v sek and Ziegler~\cite{matousek2002topological}, many other definitions are considered, but by results of Csorba and  Zivaljevi\'c, they are all $\Z_2$-homotopy equivalent to $\B(G)$ or to $\B_0(G)$ (see, e.g.,~\cite{vzivaljevic2005wi}). This explains why $\B(G)$ and $\B_0(G)$ have attracted most of the attention devoted to box complexes in the literature, and motivated the focus on them.

The next theorem establishes the link between the two box complexes considered in this paper. Here, the notation $\susp(\cdot)$ stands for the standard ``suspension'' operation from topology, which is defined as follows. Let $X$ be a topological space. The suspension of $X$, denoted by $\susp(X)$, is the quotient space $(X \times [-1,1])/(X\times \{-1\}, X \times \{1\})$, which corresponds to shrinking in $X\times[-1,1]$ all points $(x,t)$ with $t=-1$ to a single point, and same thing for all points with $t=1$. Suspension, and the more general ``join operation,'' will be further discussed in Section~\ref{sec:join}.

\begin{theorem}[Csorba~\cite{csorba2007homotopy}]\label{thm:susp}
For every graph $G$, the complexes $\susp(\B(G))$ and $\B_0(G)$ are $\Z_2$-homotopy equivalent.
\end{theorem}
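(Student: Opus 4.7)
The plan is to recognize $\B_0(G)$ as $\B(G)$ together with two $\Z_2$-symmetric contractible caps, so that the homotopy pushout realizes the suspension.

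First, I inspect the simplices in $\B_0(G)\setminus\B(G)$. If $A'\uplus A''\in\B_0(G)$ has both parts non-empty, then $A'\subseteq\CN(A'')$ and $A''\subseteq\CN(A')$ (because $G[A',A'']$ is complete bipartite), so $\CN(A'),\CN(A'')\neq\varnothing$ and the simplex already lies in $\B(G)$. The extras are therefore exactly the one-sided simplices $A'\uplus\varnothing$ with $\CN(A')=\varnothing$ and their $\Z_2$-mirrors $\varnothing\uplus A''$ with $\CN(A'')=\varnothing$. In particular, the full simplices $V_+:=\{+v:v\in V(G)\}$ and $V_-:=\{-v:v\in V(G)\}$ both lie in $\B_0(G)$, are each contractible (being single geometric simplices), and are swapped by the $\Z_2$-action.

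Setting $X_\pm:=\B(G)\cup V_\pm$, we have the $\Z_2$-equivariant decomposition
\[
\B_0(G)=X_+\cup X_-,\qquad X_+\cap X_-=\B(G),
\]
with the $\Z_2$-action swapping $X_+$ and $X_-$. The heart of the argument is the claim that each $X_\pm$ is contractible. Granting this, since the inclusions $\B(G)\hookrightarrow X_\pm$ are cofibrations (CW-pairs), the pushout $\B_0(G)=X_+\cup_{\B(G)}X_-$ coincides with the homotopy pushout, and the homotopy pushout of two contractible spaces over $\B(G)$ is precisely $\susp(\B(G))=\B(G)*S^0$ with its natural $\Z_2$-action swapping the two cone apexes.

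To prove that $X_+$ is contractible, I would construct a deformation retract onto $V_+$. The natural simplicial retraction $r\colon X_+\to V_+$ sending $+v\mapsto+v$ and $-v\mapsto+v$ is well-defined because any subset of $V_+$ is a face of the full simplex $V_+$. A continuous homotopy from the identity of $X_+$ to $X_+\xrightarrow{r}V_+\hookrightarrow X_+$ can be built from two types of local moves: on a mixed simplex $A'\uplus A''$ with $A'\neq\varnothing$, linearly scale the negative coordinates down to zero, sending the simplex into $A'\uplus\varnothing\in V_+\cap\B(G)$; on a pure negative simplex $\varnothing\uplus A''$ (necessarily with $\CN(A'')\neq\varnothing$), first enlarge it inside $\B(G)$ into $\{+v(A'')\}\uplus A''$ for a canonical choice $v(A'')\in\CN(A'')$, then proceed as before. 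These local moves assemble into a global null-homotopy via an acyclic discrete Morse matching that pairs each pure-negative simplex $\varnothing\uplus A''$ with $\{+v(A'')\}\uplus A''$, where $v(A''):=\min\CN(A'')$ relative to a fixed linear order on $V(G)$, combined with a standard acyclic collapse of the full simplex $V_+$ to a vertex.

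The main obstacle is verifying acyclicity of the combined matching. The pure-negative/mixed pairing cannot create cycles because each matched mixed simplex $\{+v(A'')\}\uplus A''$ has no other pure-negative facets (its remaining codimension-one faces all retain the vertex $+v(A'')$), and its composition with the standard collapse of $V_+$ is routine. By $\Z_2$-symmetry, applying the mirrored construction to $X_-$ gives the same conclusion, and the two contractions can be chosen equivariantly so that the resulting homotopy equivalence $\B_0(G)\simeq_{\Z_2}\susp(\B(G))$ respects the action.
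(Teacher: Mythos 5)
The paper does not prove Theorem~\ref{thm:susp}; it is cited from Csorba's paper, so there is no in-paper proof against which to compare. Evaluated on its own terms, your overall strategy is sound: $\B_0(G)=X_+\cup X_-$ with $X_+\cap X_-=\B(G)$ (and indeed $V_+\cap V_-=\varnothing$), and if each $X_\pm$ is contractible then the homotopy pushout over $\B(G)$ is $\susp(\B(G))$. The contractibility of $X_\pm$ is the genuine content, and that is where the argument has a gap.

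The discrete Morse matching you describe pairs only the pure-negative simplices $\varnothing\uplus A''$ with $\{+v(A'')\}\uplus A''$, plus the standard collapse of $V_+$. This leaves unmatched every mixed simplex $A'\uplus A''$ with $A'',A'\neq\varnothing$ and $A'\neq\{v(A'')\}$ (for instance all those with $|A'|\geq 2$; in $G=C_4$ the simplex $\{+1,+3,-2\}$ is already of this type). These become critical cells of your matching, so it does not show $X_+$ collapses to a point; the acyclicity discussion you give addresses only the small part of the matching you described. The geometric ``local moves'' do not rescue this: scaling negative coordinates to zero is undefined on pure-negative simplices, and the ``first enlarge to $\{+v(A'')\}\uplus A''$'' step is not globally coherent because $v(B'')=\min\CN(B'')$ can strictly decrease when passing to a face $B''\subsetneq A''$, so adjacent pure-negative simplices would be pushed towards different apexes. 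The fix, however, stays entirely within your framework: extend the matching so that for every $A''\neq\varnothing$ (with $\CN(A'')\neq\varnothing$) and every $A'\subseteq\CN(A'')$, the cell $A'\uplus A''$ is paired with $(A'\triangle\{v(A'')\})\uplus A''$, and keep the standard toggle on the faces of $V_+$. This is a total matching with a single critical vertex, and acyclicity follows from the observation that $A''$ is non-increasing along any gradient path (up-steps only add a positive vertex), so a closed V-path must keep $A''$ fixed and then reduces to the standard acyclic toggle matching on the Boolean lattice of subsets of $\CN(A'')$. You should also say a bit more about why the contraction of $X_-$ can be taken to be the $\Z_2$-translate of that of $X_+$, and why the resulting equivalence with $\susp(\B(G))$ is then equivariant; as written this is only asserted.
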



\subsection{Other complexes}

Apart from box complexes, the {\em Hom complex}, denoted by $\Hom(K_2,G)$, and the {\em neighborhood complex}, denoted by $\N(G)$, have also played a prominent role in topological combinatorics. The Hom complex has been introduced (in a slightly different setting, and for the more general case of hypergraphs) by Alon, Frankl, and Lov\'asz~\cite{alon1986chromatic}. The neighborhood complex has been introduced by Lov\'asz  in his 1978 foundational paper. The Hom complex $\Hom(K_2,G)$ is the partial ordered set (poset) defined as
\[
\Hom(K_2,G) =  \big\{ A' \uplus A'' \colon A',A'' \subseteq V, A' \cap A'' = \varnothing, \; G[A',A''] \text{ is complete}, \;  A', A'' \neq \varnothing \big\}
\]
equipped with the following partial order: $(A',A'') \preceq (B',B'')$ if $A' \subseteq B'$ and $A'' \subseteq B''$. Note that in contrast with the previous definitions, we do not deal here with a simplicial complex but with a poset. In the literature, several options have been taken regarding Hom complexes (CW-complexes, simplicial complexes). We follow the option chosen by Dochtermann~\cite{Dochtermann09} and also by Simonyi, Tardif, and Zsb\'an~\cite{simonyi2013colourful}. (The notation comes from the original definition that used ``multihomomorphisms'' from $K_2$ to $G$.)

The neighborhood complex $\N(G)$ is defined as follows:
\[
\N(G) = \big\{ A \subseteq V \colon \CN(A) \neq \varnothing \big\}\, .
\]
Note that, contrary to the other complexes introduced so far, $\Z_2$ does not act on it.

\begin{figure}
    \centering
    \subcaptionbox{The cycle $C_4$}[0.4\textwidth]{
    \centering
\begin{tikzpicture}[scale = 2]
    \node[circle,fill,inner sep=1pt](a) at (0,0) {}  ;
    \node [minimum size=0.2cm]  (la) at (-0.1,-0.1) {\footnotesize $4$};
    \node[circle,fill,inner sep=1pt](b) at (1,0) {}  ;
    \node [minimum size=0.2cm]  (lb) at (1.1,-0.1) {\footnotesize $3$};
    \node[circle,fill,inner sep=1pt](c) at (1,1) {}  ;
    \node [minimum size=0.2cm]  (lc) at (1.1,1.1) {\footnotesize $2$};
    \node[circle,fill,inner sep=1pt](d) at (0,1) {}  ;
    \node [minimum size=0.2cm]  (ld) at (-0.1,1.1) {\footnotesize $1$};
    \draw (a)--(b)--(c)--(d)--(a);
\end{tikzpicture}}
\subcaptionbox{The box complex $\B_0(C_4)$}[0.4\textwidth]{
\begin{tikzpicture}[scale = 0.8]
\centering
\coordinate (W) at (4,0,0);
\node[circle,fill,inner sep=1pt](w) at (4,0,0) {};
\node [minimum size=0.2cm]  (lW) at (4.6,0,0) {\footnotesize $+1$};
\coordinate (X) at (0,0,0);
\node[circle,fill,inner sep=1pt](x) at (0,0,0) {};
\node [minimum size=0.2cm]  (lX) at (-0.6,0,0) {\footnotesize $-2$};
\coordinate (Y) at (2,0,3);
\node [minimum size=0.2cm]  (lY) at (2.2,0,3.6) {\footnotesize $-4$};
\node[circle,fill,inner sep=1pt](y) at (2,0,3) {};
\coordinate (Z) at (2,2,1);
\node[circle,fill,inner sep=1pt](z) at (2,2,1) {};

\coordinate (A) at (-1.3,4,0);
\node [minimum size=0.2cm]  (lA) at (-1.9,4,0) {\footnotesize $-1$};
\node[circle,fill,inner sep=1pt](a) at (-1.3,4,0) {};
\coordinate (B) at (2.7,4,0);
\node [minimum size=0.2cm]  (lB) at (3.3,4,0) {\footnotesize $+2$};
\node[circle,fill,inner sep=1pt](b) at (2.7,4,0) {};
\coordinate (C) at (0.7,4,-3);
\node[circle,fill,inner sep=1pt](c) at (0.7,4,-3) {};
\node [minimum size=0.2cm]  (lC) at (0.5,4,-3.6) {\footnotesize $+4$};
\coordinate (D) at (0.7,2,-1);
\node[circle,fill,inner sep=1pt](d) at (0.7,2,-1) {};



\draw[fill=lightgray,fill opacity=.7] (W)--(X)--(Y)--cycle;
\draw[fill=lightgray,fill opacity=.7] (W)--(X)--(Z)--cycle;
\draw[fill=lightgray,fill opacity=.7] (W)--(Z)--(Y)--cycle;
\draw[fill=lightgray,fill opacity=.7] (Z)--(X)--(Y)--cycle;


\draw[fill=lightgray,fill opacity=.7] (W)--(Z)--(C)--cycle;
\draw[fill=lightgray,fill opacity=.7] (C)--(B)--(W)--cycle;
\draw[fill=lightgray,fill opacity=.7] (B)--(C)--(Z)--cycle;
\draw[fill=lightgray,fill opacity=.7] (W)--(Z)--(B)--cycle;

\draw[fill=lightgray,fill opacity=.7] (A)--(D)--(C)--cycle;

\draw[fill=lightgray,fill opacity=.7] (D)--(B)--(C)--cycle;
\draw[fill=lightgray,fill opacity=.7] (A)--(B)--(C)--cycle;
 \draw[fill=lightgray,fill opacity=.7] (A)--(D)--(B)--cycle;


\draw [fill=lightgray,fill opacity=.7](X)--(D)--(A)--cycle;
\draw[fill=lightgray, opacity=0.7,] (X)--(D)--(Y)--cycle;
\draw[fill=lightgray,fill opacity=.7] (X)--(Y)--(A)--cycle;
\draw [fill=lightgray,fill opacity=.7](Y)--(D)--(A)--cycle;

\node [minimum size=0.2cm]  (lD) at (0.25,1.8,-1.2) {\footnotesize $-3$};\node [minimum size=0.2cm]  (lZ) at (2.5,2.2,1.2) {\footnotesize $+3$};



\end{tikzpicture}}
    
\caption{\label{fig:my_label} The $4$-cycle and its box complex $\B_0(C_4)$}
\end{figure}
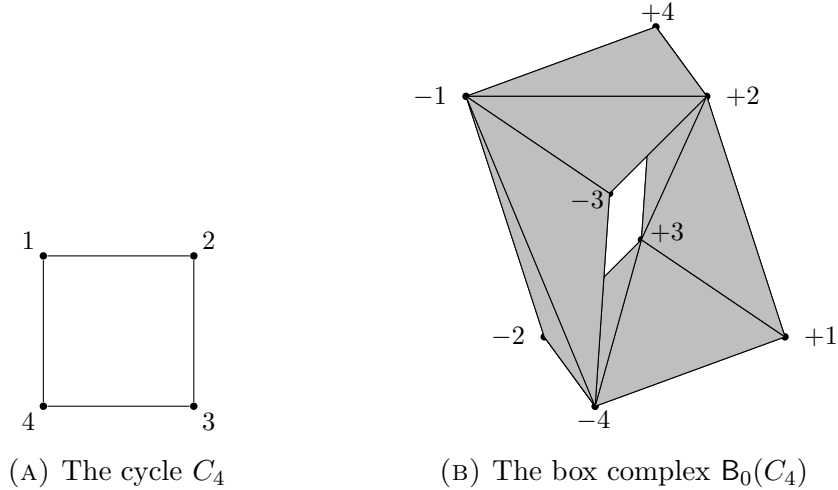

The following theorems show that these four complexes are strongly related. The next theorem is an immediate consequence of results due to Csorba et al.~\cite{csorba2004box}.

\begin{theorem}[Csorba et al.~\cite{csorba2004box}]\label{thm:NB}
For every graph $G$, the complexes $\B(G)$ and $\N(G)$ are homotopy equivalent.
\end{theorem}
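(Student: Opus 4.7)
My plan is to identify $\N(G)$ with a natural subcomplex of $\B(G)$ and to show that the inclusion is a homotopy equivalence. The map $\iota : \N(G) \hookrightarrow \B(G)$ defined by $A \mapsto A \uplus \varnothing$ is a well-defined simplicial embedding: the condition $A \in \N(G)$ is exactly $\CN(A) \neq \varnothing$, which, together with the trivial $\CN(\varnothing) = V \neq \varnothing$, fulfils both simplex conditions for $\B(G)$. It therefore suffices to prove that $\iota$ is a homotopy equivalence.

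To do so, I would construct a homotopy inverse via a contractible-carrier argument, assigning to each simplex $\sigma = A' \uplus A''$ of $\B(G)$ a non-empty contractible subcomplex $\Phi(\sigma) \subseteq \N(G)$, monotone under face inclusions, and invoking the contractible-carrier lemma. A first attempt would be $\Phi(A' \uplus A'') = \{B \in \N(G) : B \subseteq A' \cup A''\}$, but this fails because $A' \cup A''$ need not belong to $\N(G)$: for instance, in $G = K_2$ the simplex $\{+a, -b\} \in \B(G)$ produces $\{a,b\} \notin \N(G)$, so the candidate carrier is either empty or disconnected. A natural enlargement using the common-neighbor sets --- for example $\Phi(A' \uplus A'') = A' \cup \CN(A'')$ when $A'' \neq \varnothing$ --- does land in $\N(G)$, because $\CN(A' \cup \CN(A'')) = \CN(A') \cap \CN(\CN(A'')) \supseteq A'' \neq \varnothing$ (using $A'' \subseteq \CN(A')$ and $A'' \subseteq \CN(\CN(A''))$), and each such subcomplex is a simplex, hence contractible.

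The main obstacle is the monotonicity of $\Phi$ across the boundary between $A'' = \varnothing$ and $A'' \neq \varnothing$: if $\sigma_1 = \varnothing \uplus A''_1 \subseteq \sigma_2 = A'_2 \uplus A''_2$ with $A''_2 \neq \varnothing$, then the expected inclusion $\CN(A''_1) \subseteq A'_2 \cup \CN(A''_2)$ fails in general (a counter-example is provided by the path $a{-}b{-}c{-}d$, taking $A'_2 = \{c\}$, $A''_1 = \{b\}$, $A''_2 = \{b,d\}$, in which $a \in \CN(\{b\})$ lies outside $\{c\} \cup \CN(\{b,d\}) = \{c\}$). To overcome this, one passes to the barycentric subdivision and replaces the carrier by a poset map $\pi$ on face posets, then applies Quillen's fiber lemma: the fiber of $\pi$ over each simplex $A \in \N(G)$ is identified as a simplicial cone whose apex is extracted from $\CN(A)$, which handles the ``boundary'' simplices coherently via the $\Z_2$-symmetry of $\B(G)$. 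This is the approach realized in Csorba et al.~\cite{csorba2004box}; once the fibers are verified contractible, $\iota$ is a homotopy equivalence and $\B(G) \simeq \N(G)$ follows.
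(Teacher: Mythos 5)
The paper does not actually prove this theorem: it states that it is an immediate consequence of results in Csorba et al.~\cite{csorba2004box} and cites that reference. Your write-up is therefore not in competition with a proof that exists in the paper, but taken on its own terms it also stops short of a proof, and ends by pointing to the same reference.

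The preliminary reasoning you give is correct and worth having: the inclusion $\iota\colon \N(G)\hookrightarrow\B(G)$, $A\mapsto A\uplus\varnothing$, is indeed a well-defined simplicial embedding; the first candidate carrier $\Phi(A'\uplus A'')=\{B\in\N(G)\colon B\subseteq A'\cup A''\}$ fails for the reason you state (your $K_2$ example shows it can be disconnected); and the second candidate $\Phi(A'\uplus A'')=A'\cup\CN(A'')$ does land in $\N(G)$ by the computation $\CN(A'\cup\CN(A''))\supseteq A''$ but fails monotonicity, as your path example correctly shows. One small slip: your path counterexample has $A''_1=\{b\}$ and $A''_2=\{b,d\}$ both non-empty, so this is not a failure ``across the boundary between $A''=\varnothing$ and $A''\neq\varnothing$''; the monotonicity defect is more pervasive than that.

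The genuine gap is the final step. You announce that one passes to face posets, defines a map $\pi$, and applies Quillen's fiber lemma with fibers identified as cones with apex in $\CN(A)$ --- but $\pi$ is never actually written down, the asserted structure of the fibers is not justified, and the logical relation between $\pi$ and the inclusion $\iota$ (which is the map you set out to show is a homotopy equivalence) is left implicit. You attribute this argument to Csorba et al.\ and conclude by deferring to them, so as a self-contained proof this proposal is incomplete at precisely the point where the theorem becomes nontrivial. That said, since the paper under review also gives no proof, your sketch at least makes the difficulty visible, which is more than the paper itself does.
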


\begin{example}
    The neighborhood complex $\N(K_n)$ of the complete graph with $n$ vertices is the boundary of the $(n-1)$-dimensional simplex. It is thus homotopy equivalent to $S^{n-2}$, to which $\B(K_n)$ is also homotopy equivalent (Example~\ref{ex:box-complete}).
\end{example}

Combining this theorem with the inequality~\eqref{eq:bg} leads to the inequality 
\begin{equation}\label{eq:ng}
\conn(\N(G))+3 \leq \chi(G)\, .
\end{equation}
Historically speaking, it is the first topological lower bound on the chromatic number and it is due to Lov\'asz.

There is a topological relation between $\B(G)$ and $\Hom(K_2,G)$. A poset $P$ is brought in the realm of topology via the \emph{order complex}, which is the simplicial complex whose ground set is formed by the elements of $P$ and whose simplices are the collections of pairwise comparable elements. It is denoted by $\triangle(P)$.

\begin{theorem}[\v{Z}ivaljevi\'c~\cite{vzivaljevic2005wi}]\label{thm:BH}
For every graph $G$, the complexes $\B(G)$ and ${\triangle}\big( \Hom(K_2,G) \big)$ are $\Z_2$-homotopy equivalent.
\end{theorem}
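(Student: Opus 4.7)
My plan is to prove the theorem via Quillen's Theorem A, applied to the face poset of $\B(G)$. Let $F(\B(G))$ denote the poset of non-empty simplices of $\B(G)$ ordered by inclusion; it inherits the $\Z_2$-action from $\B(G)$, and its order complex $\triangle(F(\B(G)))$ is the barycentric subdivision of $\B(G)$, hence $\Z_2$-homeomorphic to $|\B(G)|$. Since $\Hom(K_2,G)$ coincides with the subposet of $F(\B(G))$ whose elements are the pairs $(A',A'')$ with both parts non-empty, the inclusion $i\colon \Hom(K_2,G) \hookrightarrow F(\B(G))$ is a $\Z_2$-equivariant poset map, and it is enough to show that it induces a $\Z_2$-homotopy equivalence on order complexes.

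I would then appeal to Quillen's Theorem A, which reduces matters to checking that for every $p \in F(\B(G))$, the fiber $\Hom(K_2,G)_{\geq p}$ has contractible order complex. When $p \in \Hom(K_2,G)$, the fiber admits $p$ itself as minimum, so its order complex is a cone. The interesting case is $p = (\varnothing, A'')$; the case $p = (A', \varnothing)$ is symmetric under the $\Z_2$-action. Here I would define a poset map $\phi$ on $\Hom(K_2,G)_{\geq p}$ by $\phi(B',B'') = (B',A'')$. The required verifications are all routine: the pair $(B',A'')$ really belongs to $\Hom(K_2,G)_{\geq p}$ because the completeness of $G[B',A'']$ follows from that of $G[B',B'']$ combined with $A'' \subseteq B''$, and $B' \cap A'' = \varnothing$ holds since $B' \subseteq \CN(A'')$; the map $\phi$ is order-preserving; and $\phi \leq \mathrm{id}$, so $\phi$ and the identity induce homotopic maps on the order complex. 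The image of $\phi$ is the subposet $\{(B',A'')\colon \varnothing \neq B' \subseteq \CN(A'')\}$, whose maximum $(\CN(A''),A'')$ is well-defined thanks to the condition $\CN(A'') \neq \varnothing$ built into the definition of $\B(G)$. The image therefore has contractible order complex, and so does the fiber.

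Quillen's Theorem A now furnishes a homotopy equivalence $\triangle(\Hom(K_2,G)) \simeq \triangle(F(\B(G))) \cong |\B(G)|$ induced by the $\Z_2$-equivariant inclusion $i$. The $\Z_2$-actions on $\B(G)$ and on $\triangle(\Hom(K_2,G))$ are both free: any chain in $\Hom(K_2,G)$ stabilized by the swap $(A',A'') \leftrightarrow (A'',A')$ would have to satisfy $A' = A''$ on each of its elements, which is impossible by disjointness, and the argument for $\B(G)$ is analogous. Hence the equivariant Whitehead theorem upgrades the previous homotopy equivalence to a $\Z_2$-homotopy equivalence.

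The hard part is the design of the retraction $\phi$ for the ``boundary'' fibers associated with pairs having an empty part. The condition $\CN(A'') \neq \varnothing$ baked into $\B(G)$ is precisely what ensures that the image of $\phi$ carries a maximum, and hence is contractible; this condition is what links the combinatorial definition of $\B(G)$ to the topology of $\triangle(\Hom(K_2,G))$.
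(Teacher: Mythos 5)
The paper does not reprove this theorem (it simply cites \v{Z}ivaljevi\'c), so there is no internal proof to compare against; judged on its own, your argument is correct and follows exactly the poset-theoretic toolkit the paper itself deploys for Theorem~\ref{thm:join-B}: you identify $\Hom(K_2,G)$ with the subposet of the face poset $\F(\B(G))$ consisting of faces with both parts nonempty (correctly noting that the $\CN$-conditions in the definition of $\B(G)$ become automatic once both parts are nonempty), invoke the up-fiber form of Quillen's fiber lemma, dispose of the fibers over $p\in\Hom(K_2,G)$ via the minimum $p$, and for $p=(\varnothing,A'')$ construct the order-preserving, idempotent $\phi\leq\operatorname{id}$ whose image has the maximum $(\CN(A''),A'')$—this is precisely the point where $\CN(A'')\neq\varnothing$, baked into $\B(G)$, is used, and it mirrors the ``\,$\psi\succeq\operatorname{id}$\,'' step in the paper's own Theorem~\ref{thm:join-B} proof, which even cites Proposition~12 of the same \v{Z}ivaljevi\'c paper for that homotopy fact. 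The final upgrade from a nonequivariant to a $\Z_2$-homotopy equivalence via freeness of both actions and the equivariant Whitehead theorem is also standard and correctly argued.
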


\subsection{Box complex of some specific graphs}\label{subsec:spec-graph}

Kneser's conjecture, whose resolution by Lov\'asz is generally considered as the birth of topological combinatorics, was about the chromatic number of Kneser graphs. These graphs, which are important objects from discrete mathematics, continue to play a central role in topological combinatorics. Given two integer numbers $n$ and $k$, with $n \geq 2k-1$, the {\em Kneser graph} $\KG(n,k)$ has all $k$-subsets of $[n]$ as vertex set and has an edge between each pair of disjoint vertices.

 To achieve the computation of the chromatic number of Kneser graphs, Lov\'asz proved first inequality~\eqref{eq:ng}, and then that $\conn(\N(\KG(n,k)))$ is equal to $n-2k-1$. Since $n-2k+2$ is an easy upper bound, the equality $\chi(\KG(n,k)) = n-2k+2$ follows immediately. This result about the neighborhood complex of Kneser graphs has been improved in various ways. The {\em wedge} of topological spaces is a standard operation in topology and consists in joining the topological spaces at a single point.

\begin{theorem}[De Longueville~{\cite[Proposition 2.4]{de2012course}}]
The box complex $\B(\KG(n,k))$ is homotopy equivalent to the wedge of an odd number of $(n-2k)$-dimensional spheres.
\end{theorem}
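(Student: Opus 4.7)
The plan is to combine three ingredients: the homotopy equivalence $\B(\KG(n,k))\simeq \N(\KG(n,k))$ from Theorem~\ref{thm:NB}, a proof that $\N(\KG(n,k))$ has the homotopy type of a wedge of $(n-2k)$-spheres, and an Euler-characteristic parity argument exploiting the free $\Z_2$-action on the box complex.

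First, by Theorem~\ref{thm:NB}, it suffices to analyze the neighborhood complex $\N(\KG(n,k))$, whose faces are the families $\{A_1,\ldots,A_t\}$ of pairwise distinct $k$-subsets of $[n]$ with $|A_1\cup\cdots\cup A_t|\leq n-k$, the support condition being equivalent to the existence of a common neighbor. Lov\'asz's classical computation underlying inequality~\eqref{eq:ng} already shows that $\N(\KG(n,k))$ is $(n-2k-1)$-connected. I would upgrade this connectivity statement to a homotopy equivalence with a wedge of $(n-2k)$-spheres by constructing a discrete Morse function on the face poset of $\N(\KG(n,k))$ whose critical cells lie only in dimensions $0$ and $n-2k$. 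A plausible matching fixes a linear order on $[n]$ and, for a simplex $\sigma$ with support $U_\sigma=\bigcup_{A\in\sigma}A$, pairs $\sigma$ either with $\sigma\cup\{B(\sigma)\}$ or with $\sigma\setminus\{B(\sigma)\}$, where $B(\sigma)$ is a canonical $k$-subset built from the smallest element of $[n]\setminus U_\sigma$ when $|U_\sigma|<n-k$, and from the structure of $U_\sigma$ itself when $|U_\sigma|=n-k$. Once the matching is checked to be acyclic and its critical cells identified, the collapsing theorem combined with the $(n-2k-1)$-connectedness forces the desired homotopy equivalence. An alternative route would be an EL-shelling of the face poset using the $S_n$-symmetry of $\KG(n,k)$; both approaches share the same combinatorial heart.

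To prove that the number $r$ of spheres is odd, I would exploit the $\Z_2$-action on $\B(\KG(n,k))$ sending $A'\uplus A''$ to $A''\uplus A'$. A fixed simplex would force $A'=A''$, hence $A'=A''=\varnothing$ by the disjointness condition, which is not a valid simplex; the same argument rules out fixed points in the geometric realization (at a fixed point one must have $t_{+v}=t_{-v}$ for all $v$, and the support is of the form $A\uplus A$ with $A=\varnothing$). Consequently $|\B(\KG(n,k))|\to|\B(\KG(n,k))|/\Z_2$ is a free two-fold covering, so $\chi(\B(\KG(n,k)))$ is even. Since $\chi\big(\bigvee_r S^{n-2k}\big)=1+(-1)^{n-2k}r$, evenness forces $r$ to be odd.

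The main obstacle is collapsing $\N(\KG(n,k))$ onto a complex of dimension $n-2k$: the $(n-2k-1)$-connectedness alone does not suffice, and the explicit construction of the matching (or shelling) and the verification of its acyclicity require a careful case analysis according to whether the support $U_\sigma$ is saturated (of size $n-k$) or not, and whether the canonical $k$-subset $B(\sigma)$ already belongs to $\sigma$. By contrast, once the wedge-of-spheres conclusion is in place, the parity argument is entirely painless.
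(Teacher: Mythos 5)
The parity half of your argument is clean and correct: the $\Z_2$-action swapping $A'\uplus A''$ and $A''\uplus A'$ is indeed free on $\B(\KG(n,k))$ (both at the level of simplices and of the polyhedron, exactly by the argument you give), so after passing to a barycentric subdivision the quotient is a finite CW-complex with half as many cells in each dimension, giving $\chi(\B(\KG(n,k))) = 2\,\chi(\B(\KG(n,k))/\Z_2)$, hence even; and $\chi\big(\bigvee_r S^{n-2k}\big) = 1+(-1)^{n-2k}r$ is even precisely when $r$ is odd. This is the standard deduction of oddness and it is independent of the parity of $n-2k$. Note that this step could just as well have been phrased as an Euler-characteristic computation on $\N(\KG(n,k))$ after transport through Theorem~\ref{thm:NB}, since homotopy equivalence preserves $\chi$.

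The genuine gap is the wedge-of-spheres claim itself. You write down a \emph{shape} of a discrete Morse matching (pairing $\sigma$ with $\sigma\cup\{B(\sigma)\}$ or $\sigma\setminus\{B(\sigma)\}$ for a ``canonical'' $k$-set $B(\sigma)$), but $B(\sigma)$ is left essentially undefined — ``built from the smallest element of $[n]\setminus U_\sigma$'' in one regime and ``from the structure of $U_\sigma$ itself'' in the other — and none of the three things that would make this a proof are established: (i) that the pairing is a well-defined matching (i.e.\ that the rule is an involution on the non-critical cells and never pairs a simplex both up and down), (ii) that it is acyclic, and (iii) that the critical cells really do live only in dimensions $0$ and $n-2k$, with exactly one critical $0$-cell. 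All the content of the theorem hides in these verifications, and you acknowledge as much. Without them, the $(n-2k-1)$-connectedness you invoke only pins down the homotopy type once you \emph{already} have a CW model with cells in dimensions $0$ and $n-2k$ only, which is exactly what is missing. So the proposal reduces the statement you want to a statement of comparable difficulty, rather than proving it. De Longueville's own argument (and the related Björner--de Longueville work for Schrijver graphs) proceeds differently, via explicit deformation retractions/closure operators on $\N(\KG(n,k))$ onto a subcomplex of the right dimension rather than a discrete Morse matching; but whichever route you take, the collapsing step has to be carried out in full, not merely asserted to be ``plausible.''
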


Schrijver graphs were introduced by Schrijver~\cite{schrijver1978vertex} soon after the resolution of Kneser's conjecture. Their role in topological combinatorics is also important. Given two integer numbers $n$ and $k$, with $n \geq 2k-1$, the {\em Schrijver graph} $\SG(n,k)$ is the subgraph of $\KG(n,k)$ induced by the {\em $2$-stable} $k$-subsets of $[n]$, which are the $k$-subsets not containing consecutive integers (with $1$ and $n$ being considered as consecutive). Schrijver proved that the chromatic number of $\SG(n,k)$ is $n-2k+2$ as well (but that they are {\em critical}, in the sense of the removal of any vertex reducing the chromatic number). Bj\"orner and De Longueville~\cite{bjorner2003neighborhood} proved in 2003 that the neighborhood complex of $\SG(n,k)$ is homotopy equivalent to an $(n-2k)$-dimensional sphere. Combined with inequality~\eqref{eq:ng}, this provides an alternative proof of $\chi(\SG(n,k))\geq n-2k+2$.
The next theorem is a strengthening of their result. 

\begin{theorem}[Schultz~\cite{schultz2011equivariant}]\label{thm:schultz}
The box complex $\B(\SG(n,k))$ is $\Z_2$-homotopy equivalent to an $(n-2k)$-dimensional sphere.
\end{theorem}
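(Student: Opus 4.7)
The non-equivariant part of the statement is already known: combining Theorem~\ref{thm:NB} with the Bj\"orner--De Longueville computation of $\N(\SG(n,k))$ shows that $\B(\SG(n,k))$ is homotopy equivalent to $S^{n-2k}$. The content of Schultz's theorem is therefore to upgrade this to a $\Z_2$-homotopy equivalence, where $S^{n-2k}$ carries the antipodal action and $\B(\SG(n,k))$ the usual action exchanging the two parts of a biclique.

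The plan is to construct an explicit $\Z_2$-equivariant map $\varphi\colon S^{n-2k}\to \B(\SG(n,k))$ and then promote it to a $\Z_2$-homotopy equivalence. For the construction of $\varphi$, the natural tool is a Gale-type parametrization in the spirit of B\'ar\'any's proof of Kneser's conjecture: choose $n$ points $\gamma(1),\ldots,\gamma(n)$ on a moment curve in $\R^{n-2k+1}$ and, for $u\in S^{n-2k}$, partition $[n]$ according to the signs of $\langle \gamma(i),u\rangle$. The key combinatorial feature of the moment curve is that every linear functional produces at most $n-2k$ sign changes along $1,\ldots,n$; combined with a careful choice of $k$-subsets inside each sign class (after passing to a barycentric subdivision of $S^{n-2k}$), this is what forces the selected $k$-subsets to be $2$-stable. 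Equivariance is automatic from $u\mapsto -u$ swapping the two sign classes.

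To promote $\varphi$ to a $\Z_2$-homotopy equivalence, I would proceed in two stages. First, $\varphi$ is a (non-equivariant) homotopy equivalence: both source and target are homotopy equivalent to $S^{n-2k}$, and the Borsuk--Ulam theorem forces any $\Z_2$-map between free $\Z_2$-spaces whose underlying spaces are $(n-2k)$-spheres to have odd degree, hence to be a homotopy equivalence. Second, an equivariant Whitehead-type argument lifts this to a $\Z_2$-homotopy equivalence: both spaces are simply connected free $\Z_2$-CW complexes when $n-2k\geq 2$, so $\varphi$ descends to a weak homotopy equivalence of the quotients by $\Z_2$ (both orbit spaces have the same $\pi_1=\Z_2$ and the same higher homotopy groups as their covers), which is a genuine homotopy equivalence by classical Whitehead, and then lifts back along the free $\Z_2$-covers to a $\Z_2$-homotopy equivalence. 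The low-dimensional cases $n-2k\in\{0,1\}$ can be verified directly from the examples of $K_2=\SG(2k,k)$ and odd cycles treated earlier in the paper.

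The main obstacle is the first step: while the Gale-type parametrization is standard, ensuring that $\varphi(u)$ actually lies in $\B(\SG(n,k))$---that the chosen subsets are $2$-stable, form a biclique in the Schrijver graph, and each have non-empty common neighborhood---requires a delicate combinatorial choice inside each sign class. This is the technical heart of the proof, and it is where the $2$-stability hypothesis (rather than mere disjointness as in the Kneser graph case) becomes essential, because the uniform bound of $n-2k$ sign changes is precisely what matches the required dimension of the sphere.
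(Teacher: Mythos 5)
The paper does not contain its own proof of this theorem; it is quoted from Schultz's paper with a citation and no argument. So the only thing to assess is the internal soundness of your proposed route.

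There is a genuine gap in the step that promotes $\varphi$ to a homotopy equivalence. You write that Borsuk--Ulam forces the $\Z_2$-map $\varphi\colon S^{n-2k}\to\B(\SG(n,k))$ ``to have odd degree, hence to be a homotopy equivalence.'' Odd degree does \emph{not} imply degree $\pm 1$: for example, $z\mapsto z^{3}$ on $S^1$ is a $\Z_2$-map of degree $3$, and suspending (or taking $(z_0,\dots,z_m)\mapsto(z_0^3,\dots,z_m^3)$ on odd spheres) produces $\Z_2$-maps of odd degree $\geq 3$ in every dimension. A $\Z_2$-map from $S^{m}$ to a free $\Z_2$-space homotopy equivalent to $S^{m}$ therefore need not be a homotopy equivalence, and the subsequent equivariant Whitehead argument cannot even begin until degree $\pm 1$ is established. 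Nothing in the Gale/moment-curve construction as you sketch it gives that control; you would have to argue it directly (e.g.\ by exhibiting a single transverse preimage, or by a discrete Morse collapse onto an explicit sub-sphere), and that is precisely the part you have labelled ``the technical heart'' and left open.

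Two further cautions. First, $\B(\SG(n,k))$ is not itself an $(n-2k)$-sphere but only homotopy equivalent to one, so ``$\Z_2$-map between free $\Z_2$-spaces whose underlying spaces are spheres'' does not literally apply; the odd-degree fact you invoke needs the cohomological-index (Conner--Floyd) formulation to make sense for a target that is merely a $\Z_2$-homology sphere. Second, your target for $\varphi$ is $\B$, not $\B_0$: you must ensure not only that each sign class yields a $2$-stable $k$-subset and that the two chosen families form a biclique in $\SG(n,k)$, but also that each part has nonempty common neighborhood (the $\CN(A')\neq\varnothing$ condition in the definition of $\B$). This is an extra constraint the Kneser-type moment-curve arguments do not automatically provide, and it is independent of the degree issue.

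Your overall architecture---build an explicit $\Z_2$-map, show it is a weak equivalence, invoke the equivariant Whitehead theorem for free $\Z_2$-CW complexes---is a reasonable template, but as written the middle step does not go through and the first step is only a program. The published proof by Schultz does not, to our knowledge, proceed by first citing Bj\"orner--De Longueville and then upgrading; it establishes the $\Z_2$-homotopy type directly, so the ``strengthening'' is methodological and not just a formal promotion of a known non-equivariant statement.
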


More generally, {\em $s$-stable $k$-subsets} of $[n]$ are defined as those $k$-subsets $X \subseteq [n]$ such that $s \leq |j-i| \leq n-s$ for every pair $i,j$ of distinct elements of $X$. For $n \geq sk$, the chromatic number of the {\em $s$-stable Kneser graph}, defined as the subgraph of $\KG(n,k)$ induced by the $s$-stable $k$-subsets of $[n]$, has been conjectured to be equal to $n-sk$~\cite{meunier2011chromatic}. When $s \geq 4$, Jonsson~\cite{jonsson2012chromatic} verified this conjecture for large $n$ in terms of $s$ and $k$. The conjecture has also been proved for all even $s$ by Chen~\cite{chen2015multichromatic} and for $k=2$ by the authors and Mizrahi~\cite{daneshpajouh2021colorings}. See also Frick~\cite{frick2020chromatic} for complementary results. Studying the box complex of these graphs could form a reasonable approach to this conjecture. Yet, this is still an almost unexplored field (see Oszt\'enyi~\cite{osztenyi2019neighborhood} and Daneshpajouh and Oszt\'enyi~\cite{daneshpajouh2021neighborhood} for related results).\label{page:sstable}


Here is a result about another classical family of graphs.

\begin{theorem}[Csorba~\cite{csorba2010homotopy}]
The box complex $\B(G)$ of a connected chordal graph $G$ is homotopy equivalent to the wedge of an odd number of spheres.
\end{theorem}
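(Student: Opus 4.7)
The plan. By Theorem~\ref{thm:NB}, $\B(G)\simeq\N(G)$, so it suffices to prove the claim for the neighborhood complex. I separate the result into two statements: $(a)$ $\N(G)$ is homotopy equivalent to a wedge of spheres, and $(b)$ the number of spheres in the wedge is odd.

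For $(b)$, I would rely on an Euler-characteristic argument valid for any graph. The involution $A'\uplus A''\mapsto A''\uplus A'$ on $\B(G)$ fixes no non-empty simplex, because the disjointness of $A'$ and $A''$ would force $A'=A''=\varnothing$. Hence the $\Z_2$-action on $|\B(G)|$ is free and $\chi(\B(G))=2\chi(\B(G)/\Z_2)$ is even. A wedge of $k$ spheres of dimensions $d_1,\ldots,d_k$ has Euler characteristic $1+\sum_{i=1}^{k}(-1)^{d_i}\equiv 1+k\pmod{2}$, so evenness is equivalent to $k$ being odd. Hence once $(a)$ is established, the parity of the sphere count follows automatically.

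The substance is $(a)$, which I would prove by induction on $|V(G)|$. The base cases are complete graphs: $\N(K_n)=\partial\Delta^{n-1}\simeq S^{n-2}$ is a single sphere (with the convention $\partial\Delta^0=S^{-1}=\varnothing$ when $n=1$). For the inductive step, suppose $G$ is connected chordal but not complete, and pick a simplicial vertex $v$ of $G$; such a vertex exists by chordality. Its neighborhood $K=N(v)$ is a clique lying inside a single component of $G-v$, so $G-v$ is connected and still chordal, and the inductive hypothesis gives that $\N(G-v)$ is a wedge of spheres. The key is then to decompose $\N(G)=\N(G-v)\cup\Sigma$, where $\Sigma$ is the subcomplex generated by all simplices of $\N(G)$ that either contain $v$ or are subsets of $K$ whose only common neighbor in $G$ is $v$; this is genuinely a decomposition, as can be seen by case analysis on whether a simplex of $\N(G)$ contains $v$, lies in $\N(G-v)$, or is a subset of $K$ having $v$ as its unique common neighbor. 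I would then study the attachment of $\Sigma$ to $\N(G-v)$ along their intersection, aiming to show that this attachment preserves the wedge-of-spheres property, adding either nothing or a single sphere of dimension $|K|-1$.

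The main obstacle is this attachment analysis. The subcomplex $\Sigma$ is not simply the cone on $v$: it contains both the simplex $\Delta(K)$ (with those subsets of $K$ that are already in $\N(G-v)$ identified there) and new simplices $A'\cup\{v\}$ in which $A'$ has a common neighbor inside $K$, and the interaction between these two types and $\N(G-v)$ must be tracked carefully, splitting the analysis according to whether $K$ itself has a common neighbor outside $v$. A cleaner alternative, if the direct gluing becomes too intricate, would be to use discrete Morse theory on $\N(G)$ based on a perfect elimination ordering of $G$, which exists precisely because $G$ is chordal: the critical cells of the resulting matching would correspond to the spheres in a wedge decomposition, and the odd count would still follow from the Euler-characteristic argument above.
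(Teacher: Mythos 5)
Your Euler-characteristic argument for part (b) is correct and standard: the $\Z_2$-action on $\B(G)$ is free (the paper states this), forcing $\chi(\B(G))$ to be even, and a wedge of $k$ spheres has $\chi\equiv 1+k\pmod 2$, so $k$ must be odd once (a) is in hand. The reduction to $\N(G)$ via Theorem~\ref{thm:NB} is legitimate, as are the elementary observations that a non-complete connected chordal graph has a simplicial vertex whose removal keeps the graph connected and chordal. Note also that the paper does not prove this theorem itself (it only cites Csorba), so your proposal is being judged on its own.

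The substance, part (a), has a genuine error: the claimed inductive step is false. You assert that attaching $\Sigma$ to $\N(G-v)$ ``adds either nothing or a single sphere of dimension $|K|-1$.'' Already for $K_n$ the passage from $K_{n-1}$ replaces $S^{n-3}$ by $S^{n-2}$---a suspension, not a wedge sum. Worse, it fails inside the inductive step: take the bowtie graph $G$ with $V(G)=\{1,\ldots,5\}$ and $E(G)=\{12,13,23,34,35,45\}$ (two triangles sharing vertex $3$), which is connected and chordal, and remove the simplicial vertex $v=5$, so $K=N(5)=\{3,4\}$. One checks that $\N(G-5)$ has maximal faces $\{1,2,4\},\{1,3\},\{2,3\}$ and is homotopy equivalent to $S^1$, while $\N(G)$ has maximal faces $\{1,2,4,5\},\{1,3\},\{2,3\},\{3,4\},\{3,5\}$ and is homotopy equivalent to $S^1\vee S^1\vee S^1$. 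Removing one simplicial vertex drops the wedge from three circles to one. Moreover, $\Sigma$ is not contractible here: it is the solid $3$-simplex on $\{1,2,4,5\}$ together with the edges $\{3,4\}$ and $\{3,5\}$, hence $\Sigma\simeq S^1$, and $\Sigma\cap\N(G-v)$ is a filled triangle plus the isolated vertex $3$, i.e., $S^0$; Mayer--Vietoris then yields two new $\Z$'s in $H_1$, not one. So both the shape of $\Sigma$ and the effect of the attachment are wrong, and the real argument must handle suspensions and multi-sphere changes depending on whether $K$ has a common neighbor other than $v$ and on the link structure at $v$. The discrete-Morse alternative you mention is only a gesture: a perfect elimination ordering does not by itself produce an acyclic matching, and you give no construction of one, so it cannot fill the gap.
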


\section{Parameters of graphs and complexes}\label{sec:parameter}



This section is devoted to the definition of the parameters present in Figure~\ref{fig}. A subsection concerns combinatorial parameters and another concerns topological parameters. Most of these parameters are then compared in a subsequent subsection. Two very important parameters are not defined hereafter since we have already met them in Section~\ref{sec:first}: the connectivity of a simplicial complex or a topological space, denoted by $\conn(\cdot)$, and the chromatic number of a graph, denoted by $\chi(\cdot)$.

\subsection{Combinatorial parameters}\label{subsec:comb}

The most classical lower bound on the chromatic number is the {\em clique number}, defined as the maximal number of vertices of a {\em clique}, i.e., a complete subgraph. It is denoted by $\omega(G)$. Another relevant parameter is the largest $n$ such that $G$ contains a biclique with parts of sizes $k,\ell$ for all $k,\ell \geq 1$ satisfying $k+\ell = n$. (We remind that {\em bicliques}, already met in Section~\ref{sec:first}, are complete bipartite subgraphs.) It is denoted by $b(G)$.


Consider a graph $G$ with a proper coloring $c \colon V(G) \rightarrow \Z_+$. A biclique is {\em zigzag} if there exists a numbering $v_1, v_2, \ldots, v_t$ of its vertices such that the vertices with an odd index form one part of the biclique, those with an even index form the other part, and 
\[
c(v_1) < c(v_2) < \cdots < c(v_t) \, .
\]
The notion of zigzag bicliques originates from the work of Simonyi and Tardos~\cite{simonyi2006local} (but the terminology is ours). The {\em zigzag number} of $G$, denoted by $\zig(G)$, is the minimum over the proper colorings of $G$ of the size of a largest zigzag biclique. We clearly have $\zig(G) \leq \chi(G)$.

A hypergraph is {\em $2$-colorable} if its vertices can be colored with two colors in such a way that no edge is monochromatic. The {\em $2$-colorability defect} of a hypergraph $\HH$, denoted by $\cd(\HH)$, is the minimal number of vertices to remove from $\HH$ such that the hypergraph induced by the remaining vertices is $2$-colorable:
\[
\cd(\HH)=\min\left\{|U|\colon \left(V(\HH)\setminus U,\{e\in E(\HH)\colon\;e\cap U=\varnothing\}\right)\mbox{ is $2$-colorable}\right\}\, .
\]
This parameter has been introduced by Dol'nikov~\cite{dol1988certain} who proved that it also provides a lower bound on the chromatic number with the help of the notion of Kneser representation. A {\em Kneser representation} of a graph $G$ is a hypergraph $\HH$ with a one-to-one mapping from $V(G)$ to $E(\HH)$ such that any two vertices are adjacent if and only if they are mapped to disjoint edges of $\HH$. Given a Kneser representation $\HH$ of $G$, then Dol'nikov's inequality reads
\begin{equation}\label{eq:dol}
\cd(\HH) \leq \chi(G)\, .
\end{equation}

\begin{example}
For the Kneser graph $\KG(n,k)$, the complete $k$-uniform hypergraph on $[n]$, which we denote by $\mathcal{K}_n^k$, is a natural Kneser representation. Removing less than $n-2k+2$ vertices from that hypergraph does not make it $2$-colorable: any $2$-coloring will color with the same color at least $k$ elements. Removing exactly $n-2k+2$ vertices does make it $2$-colorable: color $k-1$ elements with one color; color the other $k-1$ elements with the other color. Thus $\cd(\mathcal{K}_n^k)=n-2k+2$, and we get $\chi(\KG(n,k))\geq n-2k+2$. We already mentioned that $n-2k+2$ is an easy upper bound, and so inequality~\eqref{eq:dol} is tight for Kneser graphs.
\end{example}

It is actually not difficult to come up with a Kneser representation of any graph $G$. The standard construction consists in representing each vertex of $G$ by the set of its incident edges in the complement graph; in some cases, unfortunately, vertices can be represented by the same set, an issue that can be easily fixed by introducing an extra element for each vertex. The question of finding a Kneser representation of minimal size has been investigated by Hamburger, Por, and Walsh~\cite{hamburger2009kneser}.

The last combinatorial parameters we define are related to posets. A {\em chain} in a poset is a collection of pairwise comparable elements. The first parameter is the {\em height} of a poset $P$, denoted by $\height(P)$, and defined as the maximum size of a chain. It is a classical parameter. The second one is less common and has been introduced by Simonyi, Tardif, and Zsb\'an~\cite{simonyi2013colourful} in relations with topological bounds on the chromatic number. A {\em $\Z_2$-poset} is a poset endowed with an order-preserving involution. The group $\Z_2$ is thus acting on such a poset. For a free $\Z_2$-poset $P$, we define its {\em cross-index}, denoted by $\Xind(P)$, as the minimum $t$ such that there exists a $\Z_2$-map from $P$ to $Q_t$, where $Q_t$ is a $\Z_2$-poset defined as follows. It has $\{\pm 1,\pm 2,\ldots, \pm (t+1)\}$ as ground set, $x\preceq y$ for $x,y\in Q_t$ if $|x|\leq |y|$, and whose order-preserving involution is the map $x\mapsto -x$. We have 
\begin{equation}\label{eq:hP}
    \Xind(P) \leq \height(P) -1 \, .
\end{equation}
Indeed, assign a sign $+$ to one element and a sign $-$ to the other element of every orbit; define a $\Z_2$-map $\phi$ from $P$ to $Q_{\height(P)-1}$ by setting $\phi(x)$ to be the largest cardinality of a chain ending at $x$ with the sign assigned to $x$.

\subsection{Topological parameters}\label{subsec:topol} From now on, we assume basic knowledge in algebraic topology. For the terminology and the notation not defined in the present paper, we refer the reader to the book by Hatcher~\cite{hatcher2005algebraic}.

The {\em index} of a topological $\Z_2$-space $X$ is the smallest $d$ such that there exists a continuous $\Z_2$-map from $X$ to $S^d$ (the central symmetry making this latter a $\Z_2$-space). By convention, it is equal to $+\infty$ when there is no such map for any $d$, and to $-1$ when $X$ is empty. The {\em coindex} of a topological $\Z_2$-space $X$ is the largest $d$ such that there exists a continuous $\Z_2$-map from $S^d$ to $X$. By convention, it is equal to $-1$ when $X$ is empty and to $+\infty$ when there is such a map for arbitrarily large $d$. (Note that when $X$ is non-empty, such a map always exists for $d=0$.) Therefore, both the index and the coindex take their values in $\{-1,0,1,2,\ldots\} \cup \{+\infty\}$. The Borsuk--Ulam theorem implies (and is actually equivalent to) the inequality $\coind(X) \leq \ind(X)$ for every topological $\Z_2$-space $X$.

We already defined the notion of free simplicial $\Z_2$-complexes. Similarly, a {\em free} $\Z_2$-space is such that every orbit is of size $2$. It is immediate to check that index and coindex of non-free topological $\Z_2$-spaces are equal to $+\infty$. So, even if the definition of index and coindex is valid for non-free topological $\Z_2$-spaces, it is only relevant for free topological $\Z_2$-spaces.


A topological parameter that is closely related to the connectivity is the $\Z_2$-connectivity. The \emph{$\Z_2$-connectivity} of a topological space $X$ is denoted by $\conn_{\Z_2}(X)$ and is the maximal integer $d$ such that the reduced homology groups $\widetilde H_k(X,\Z_2)$ with $\Z_2$ coefficients are trivial for all $k \in\{-1,0,1,\ldots,d\}$. The classical Hurewicz theorem states in particular that the inequality $\conn(X)\leq\conn_{\Z_2}(X)$ holds for every such topological space $X$.

The last topological parameter we consider is also maybe the most abstract. We define it for a (finite) simplicial free $\Z_2$-complex $\K$. There always exists a $\Z_2$-map from such a $\K$ to the infinite-dimensional sphere $S^{\infty}$. This can be easily achieved by induction on the dimension of $\K$, the sphere $S^{\infty}$ being contractible; see~\cite[Proposition 5.3.2(v)]{matousek2008using} or \cite[Proposition 8.16]{kozlov2008combinatorial} for a complete proof. Pick any such map $f$. This $\Z_2$-map $f$ induces a map $\bar f \colon \K / \Z_2 \rightarrow S^{\infty} / \Z_2 = \R P^{\infty}$ and in cohomology we get a map $\bar f^* \colon H^*(\R P^{\infty},\Z_2) \rightarrow H^*(\K/\Z_2,\Z_2)$. The graded algebra $H^*(\R P^{\infty},\Z_2)$ is of the form $\Z_2[\alpha]$, with generator $\alpha$ taken in $H^1(\R P^{\infty},\Z_2)$~\cite[Theorem 3.19]{hatcher2005algebraic}. Denoting by $\varpi_1(\K)$ the image of $\alpha$ by $\bar f^*$, the {\em cohomological-index} of $\K$, also called its \emph{Stiefel--Withney height}, is the maximum $n$ such that $(\varpi_1(\K))^n \in H^n(\K/\Z_2,\Z_2)$ is not $0$. (Here, the power $n$ is computed according to the cup product of the cohomology ring $H^*(\K/\Z_2,\Z_2)$.) We denote it by $\hind(\K)$. The cohomological index is well-defined because the map $\bar f^*$ does not depend on the choice for $f$: indeed, all $\Z_2$-maps from $\K$ to $S^{\infty}$ induce the same map $\K / \Z_2 \rightarrow S^{\infty} / \Z_2 = \R P^{\infty}$ up to homotopy; see~\cite[Theorem 8.17]{kozlov2008combinatorial}.

\subsection{General relations between various parameters}\label{subsec:gen-rel-par}


For a free (finite) simplicial $\Z_2$-complex $\K$, we always have

 \begin{tikzpicture}
        \node at (2/3,2/3) (s1)  {{$\labelrel\uleq{a1}$}};
        \node at (2/3,-1/3) (s2)  {{$\labelrel\dleq{a2}$}};
        \node at (10.7/3,2/3) (s3)  {{$\labelrel\dleq{b1}$}};
        \node at (11/3,-1/3) (s4)  {{$\labelrel\uleq{b2}$}};
        \node at (-2/3,0) (a1) {$1+\conn(\K)$};
        \node at (2.2,0.85) (a2) {$\coind(\K)$};
        \node at (2.3,-1) (a3) {$1+\conn_{\Z_2}(\K)$};
        \node at (5.5,0) (a4) {$\hind(\K)$};
        \node at (7.25,1/8) (s5)  {{$\labelrel\leq{c}$}};
        \node at (8.25,0) (a5) {$\ind(\K)$};
         \node at (9.25,1/8) (s6)  {{$\labelrel\leq{d}$}};
        \node at (10.75,0) (a6) {$\Xind(\F(\K))$};
        \node at (12.25,1/8) (s7)  {{$\labelrel\leq{e}$}};
        \node at (13.35,0) (a7) {$\dim(\K) \, ,$};
\end{tikzpicture}  
where $\F(\K)$ is the face poset of $\K$. These inequalities will be mostly used in Section~\ref{sec:comment}, where we explain Figure~\ref{fig} in detail. We now discuss briefly each inequality.

\begin{proof}[Inequality~\eqref{a1}] 
The inequality $1+\conn(\K) \leq \coind(\K)$ is proved for instance by Matou\v sek in his book~\cite[p.97, proof of (iv)]{matousek2002topological}. (The objective in this reference is to prove a lower bound on the index, but the proof is actually showing the stronger result with coindex.)
\end{proof}

\begin{proof}[Inequality \eqref{a2}]
It is a consequence of the Hurewicz theorem.
\end{proof}

\begin{proof}[Inequality \eqref{b1}] 
It is a now classical inequality but we provide a complete and self-contained proof. 

We first establish that the cohomological index of a sphere is at least its dimension (it is actually equal; see discussion of inequality~\eqref{c} below). The conclusion will then follow easily. The graded algebra $H^*(\R P^d,\Z_2)$ is of the form $\Z_2[\beta]/\beta^{d+1}$ for some generator $\beta$ \cite[Theorem 3.19]{hatcher2005algebraic}. The inclusion map $S^d \xhookrightarrow{} S^{\infty}$ induces an injective homomorphism 
$H^*(\R P^{\infty},\Z_2) \rightarrow H^*(\R P^d,\Z_2)$ (this is because written as CW-complexes, $\R P^d$ is exactly the $d$-skeleton of $\R P^{\infty}$; see~\cite[Lemma 2.34, (c)]{hatcher2005algebraic} for a proof for a similar property for homology). This injectivity implies in particular that $\varpi_1(S^d) = \beta$, and thus $(\varpi_1(S^d))^d \neq 0$. Hence, $\hind(S^d)$ is at least $d$. To conclude, assume the existence of a $\Z_2$-map $g\colon S^d \rightarrow \K$. The composition $\bar g^* \circ \bar f^*\colon H^*(\R P^{\infty},\Z_2) \rightarrow H^*(\R P^d,\Z_2)$, with $\bar f^*$ defined as in Section~\ref{subsec:topol}, implies that $(\varpi_1(S^d))^n=0$ if $(\varpi_1(\K))^n=0$. Hence, $(\varpi_1(\K))^d\neq 0$.
\end{proof}

\begin{proof}[Inequality \eqref{b2}]
This inequality requires some work, and we refer to the work by Conner and Floyd~\cite[4.7]{conner1960} (where we use the isomorphism between homology and cohomology for finite simplical complexes and coefficients in a field); see also~\cite[Corollary 3.3]{kozlov2006homology}.
\end{proof}

\begin{proof}[Inequality \eqref{c}]
The quantity $\hind(S^d)$ is at most $d$ (actually, is equal to $d$ because of the argument used for proving inequality~\eqref{b2}), since $H^n(S^d/\Z_2)=0$ when $n>d$. Following similar lines as in the proof of inequality~\eqref{b1}, we get the desired inequality.
\end{proof}

\begin{proof}[Inequality \eqref{d}]
Let $t = \Xind(\F(\K))$. Any $\Z_2$-map from $\F(\K)$ to $Q_t$ lifts to a $\Z_2$-map from $\triangle \F(\K)$ to $\triangle Q_t$. The simplicial complex $\triangle Q_t$ is $\Z_2$-homeomorphic to a $t$-dimensional sphere. This implies inequality~\eqref{d}.
\end{proof}
\begin{proof}[Inequality \eqref{e}]
The dimension of $\K$ is the same as the height of its face poset minus one; therefore, inequality~\eqref{e} is a specialization of inequality~\eqref{eq:hP} for $P=\F(\K)$.
\end{proof}


\section{Computability and complexity of parameters}\label{sec:comput}

\subsection{Decidability and hardness}\label{subsec:dec-compl}

Clearly, all combinatorial parameters of Section~\ref{subsec:comb} are decidable (i.e., given an integer $k$, there is an algorithm deciding whether the parameter is at most $k$). The same thing holds for topological parameters based on homological or cohomological quantities. For the other parameters, we do not know the answer, except for $\conn(\B(G))$ and $\conn(\B_0(G))$. \label{page:dec}The former is undecidable since, via Csorba's result, it is equivalent to the decidability of topological connectivity in general, which is not possible~\cite[Corollary 3.9]{davis1977unsolvable}. (We reduce to $\Z_2$-complexes just by taking the product with a sphere of sufficiently large dimension.) For the latter, which is a better bound, we have the following.

\begin{theorem}\label{thm:dec_B0}
The quantity $\conn(\B_0(G))$ is decidable. 
\end{theorem}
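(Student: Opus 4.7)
My plan is to exploit the suspension description $\B_0(G) \simeq_{\Z_2} \susp(\B(G))$ provided by Theorem~\ref{thm:susp}. Connectivity of a general simplicial complex is undecidable because simple-connectivity is, but the suspension operation constrains $\pi_1(\B_0(G))$ so tightly that it can always be read off combinatorial data about $G$; once simple-connectivity is settled, Hurewicz's theorem reduces the computation of $\conn(\B_0(G))$ to that of the integer homology groups of $\B_0(G)$, which is algorithmic.

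First I would dispose of the low-connectivity cases. The polyhedron $|\B(G)|$ is non-empty if and only if some $\{\pm v\}$ belongs to $\B(G)$, i.e., if and only if $G$ has an edge. If $G$ is edgeless, then $\B_0(G)\simeq\susp(\varnothing)=S^0$, so $\conn(\B_0(G))=-1$. Otherwise, $\susp(|\B(G)|)$ is path-connected (any two points are joined via an apex), so $\conn(\B_0(G))\geq 0$. I would then compute the number $c$ of path-components of $|\B(G)|$ from its $1$-skeleton, which is read off the definition of $\B(G)$. Using the standard homotopy equivalence
\[
\susp(X_1\sqcup\cdots\sqcup X_c)\;\simeq\;\bigvee_{i=1}^{c}\susp(X_i)\;\vee\;\bigvee_{i=1}^{c-1}S^1,
\]
I can conclude that $\pi_1(\B_0(G))$ is free of rank $c-1$: if $c\geq 2$ this group is non-trivial, so $\conn(\B_0(G))=0$; if $c=1$, then $\susp(|\B(G)|)$ is simply connected (which is also confirmed directly via Van Kampen, since the two open cones covering $\susp(|\B(G)|)$ are contractible with path-connected intersection $|\B(G)|$).

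In the simply-connected case, Hurewicz's theorem yields
\[
\conn(\B_0(G))\;=\;\min\bigl\{n\geq 1\colon \tilde H_n(\B_0(G);\Z)\neq 0\bigr\}-1,
\]
with the convention that the minimum over an empty set is $+\infty$; by Whitehead's theorem, the latter case corresponds precisely to $\B_0(G)$ being contractible, and is detected by the vanishing of its integer homology in every degree up to $\dim\B_0(G)$. Since the integer simplicial homology of a finite complex is algorithmically computable (e.g., via Smith normal form of the boundary matrices), this completes the decision procedure.

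I do not anticipate a substantial obstacle. The only point where one has to be careful is the fundamental group: the general undecidability of topological connectivity cited on page~\pageref{page:dec} rests on the undecidability of $\pi_1$, and the role of Theorem~\ref{thm:susp} in the present argument is precisely to sidestep that difficulty by forcing $\pi_1(\B_0(G))$ to be a free group whose rank is combinatorial.
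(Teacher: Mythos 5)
Your proposal is correct and follows essentially the same route as the paper's proof: both rely on Theorem~\ref{thm:susp} to express $\B_0(G)$ as a suspension and then exploit the fact that a suspension of a path-connected space is simply connected, so that the Hurewicz theorem reduces $\conn(\B_0(G))$ to an integer-homology computation, which is algorithmic (Smith normal form). The only substantive difference is the treatment of the intermediate case: the paper determines path-connectedness of $\B(G)$ via the classical characterization in terms of $G$ being connected and non-bipartite, whereas you count path components of $\B(G)$ directly from its $1$-skeleton and then use the splitting of $\susp$ over disjoint unions to read off $\pi_1(\B_0(G))$. Your route is arguably more robust, since the direct component count does not need to worry about isolated vertices of $G$ (which contribute nothing to $\B(G)$) and so avoids the edge cases that the bipartite/connected criterion must implicitly absorb.

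One small gap: your ``edgeless'' branch conflates two cases. If $V(G)=\varnothing$, then $\B_0(G)$ is the void complex with $|\B_0(G)|=\varnothing$ and $\conn(\B_0(G))=-2$; here Theorem~\ref{thm:susp} cannot be applied literally, since $\susp(\varnothing)=S^0\not\simeq\varnothing$. Only when $G$ has at least one vertex but no edge does one get $\B_0(G)\simeq S^0$ and $\conn(\B_0(G))=-1$. The paper separates these two cases explicitly at the start of its proof; your argument should do the same.
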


Actually, the proof will make clear that there is an algorithm realizing the decision task in a time that is polynomial in the size of $\B_0(G)$.

\begin{proof}[Proof of Theorem~\ref{thm:dec_B0}]
If $G$ has no vertex, then $\conn(\B_0(G))$ is $-2$ because $\B_0(G)$ has no vertices and no simplices.

If $G$ has at least one vertex and no edges, then $\conn(\B_0(G))$ is $-1$ because $\B_0(G)$ has at least two $0$-simplices (obtained from a same vertex in the graph) but no $1$-simplices.

For the remaining cases, we assume that $G$ has at least one edge, and thus that $\B_0(G)$ is path-connected because $\B(G)$ is non-empty and $\B_0(G)$ is the suspension of $\B(G)$ (Theorem~\ref{thm:susp}). Moreover, we will use the (well-known) fact that when $G$ is connected, $\B(G)$ is path-connected if and only if $G$ is not bipartite (see, e.g., ~\cite[p.~320]{lovasz1978kneser} where it is stated for the neighborhood complex, which is homotopy equivalent according to Theorem~\ref{thm:NB}).

If $G$ is bipartite or not connected, then $\conn(\B_0(G))$ is $0$ because in that case $\B(G)$ is not path-connected, which implies that the reduced $1$-dimensional homology of $\B_0(G)$ is non-trivial (consequence of Theorem~\ref{thm:susp} and equality~\eqref{eq:homo-susp}).

If $G$ is non-bipartite and connected, then $\conn(\B_0(G))$ is equal to its homological connectivity over $\Z$ because in that case $\B(G)$ is path-connected, which implies that $\B_0(G)$ is simply connected (by equality~\eqref{eq:conn-susp}) and thus that the Hurewicz theorem applies. Since homology over $\Z$ can be computed via elementary linear algebra, this finishes the proof.
\end{proof}

The computation of $\chi(G)$ and $\omega(G)$ are among the first problems that have been proved $\NP$-hard~\cite{karp1972reducibility}. The computation of $\cd(\HH)$ and $b(G)$ are $\NP$-hard as well. For the first parameter, see~\cite{alishahi2017strengthening}, and for the second, this can be shown by a direct reduction from the ``maximum balanced biclique,'' which is \NP-hard~\cite{garey}: the existence in $G$ of a biclique whose parts are both of size at least $k$ is equivalent to $b(G') \geq 2k$, where $G'$ is the disjoint union of $G$ and all possible bicliques with part sizes $\ell,m$, with $\ell+m=2k$ and $\ell\leq k-1$.

The complexity status of the other parameters is not known. Note that even if computing the homology or cohomology is polynomial in terms of the size of the simplicial complex, it does not imply any polynomiality result for graphs since the complexes we consider are exponential in the size of the graph. 

The parameters $\zig(G)$ and $\Xind(\Hom(K_2,G))$ are the combinatorial parameters for which the complexity status is still open. Their combinatorial essence make their complexity status a natural challenge. Simonyi, Tardif, and Zsb\'an~\cite{simonyi2013colourful} proved that computing $\Xind(P)$ for any $\Z_2$-poset $P$ is $\NP$-hard, but this leaves open the complexity of computing $\Xind(P)$ when $P$ is the Hom complex of a graph $G$. \label{page:compl}




\subsection{Borsuk--Ulam boundary}\label{subsec:BU} Except for the clique number, the Borsuk--Ulam theorem---or a re-proof of it---is used to prove $\lb(G) \leq \chi(G)$ for all lower bounds $\lb$ below the red line in Figure~\ref{fig}. Moreover, if we also discard the lower bound $\cd(\HH)$, the inequality $\lb(G) \leq \chi(G)$ can be used to establish the Borsuk--Ulam theorem, as we will discuss further below. Concerning the inequality $\cd(\HH) \leq \chi(G)$ (with $\HH$ a Kneser representation of $G$), we are not aware of any way to derive the Borsuk--Ulam theorem from it. This will also be discussed a bit further at the end of the section.

On the other hand, the proof of $\lb(G) \leq \chi(G)$ is trivial for all lower bounds $\lb$ above the red line, namely when
\[
\lb(G) \in \{\hind(\B(G))+2,\ind(\B_0(G))+1,\ind(\B(G))+2,\Xind(\Hom(K_2,G))+2,\zig(G)\} \, .
\]
For $\zig(G)$, it is immediate from the definition. For the other bounds, it is also immediate once we notice that any proper coloring of $G$ with $n$ colors translates directly into a $\Z_2$-map from $\B(G)$, $\B_0(G)$, and $\Hom(K_2,G)$ to respectively $\B(K_n)$, $\B_0(K_n)$, and $\Hom(K_2,K_n)$. These inequalities are ``functorial,'' since they are just translations of the existence of a map in a setting into another setting. It does not seem that any of these inequalities can be used to prove the Borsuk--Ulam theorem.

These are the reasons why we call the red line in Figure~\ref{fig} the Borsuk--Ulam boundary: the topological lower bounds above the line does not require the Borsuk--Ulam theorem; the topological lower bounds below require the Borsuk--Ulam theorem (apart possibly for $\cd(\HH)$).

We are fully aware that this kind of statements is hard to support with mathematical arguments. Yet, we provide now an elementary proof of the Borsuk--Ulam theorem from the inequality $\conn(\B(G))+3 \leq \chi(G)$, and a complexity result showing that the statement $\coind(\B(G))+2 \leq \chi(G)$ is somehow equivalent to the Borsuk--Ulam theorem. We finish with a computational problem related to Schrijver's graph, which also illustrates the fact all topological lower bounds below the dashed line in Figure~\ref{fig} require the Borsuk--Ulam theorem (apart possibly for $\cd(\HH)$).

\subsubsection*{Proving the Borsuk--Ulam theorem from $\conn(\B(G))+3 \leq \chi(G)$}

The {\em Borsuk graph} $\Bor^d(\varepsilon)$ has the points of the unit sphere $S^d$ as vertices, and has an edge between two vertices if they are at distance at least $2-\varepsilon$ apart (Euclidean distance). It is well-known that the Borsuk--Ulam theorem is an immediate consequence of the chromatic number of $\Bor^d(\varepsilon)$ being at least $d+2$; see, e.g., Exercise 10 in \cite[section 2.2]{matousek2008using}. Let $r$ be an arbitrary positive integer. From the simplicial complex $\sd^r(\partial\Diamond^{d+1})$ ($r$-th barycentric subdivision of the boundary of the $(d+1)$-dimensional cross-polytope), we construct a graph $G$ , as done in Csorba's proof of Theorem~\ref{thm:csorba} (see Section~\ref{subsec:univers}). This proof ensures that the simplicial complexes $\B(G)$ and $\sd^r(\partial\Diamond^{d+1})$ are $\Z_2$-homotopy equivalent, and thus that $\conn(\B(G)) = d-1$ holds. The inequality $\conn(\B(G))+3 \leq \chi(G)$ implies then that
$d+2 \leq \chi(G)$. Since $G$ is actually isomorphic to a subgraph of $\Bor^d(\varepsilon)$ when $r$ is a sufficiently large integer, we have $\chi(\Bor^d(\varepsilon)) \geq d+2$, as required.

\subsubsection*{The coindex lower bound is polynomially equivalent to the Borsuk--Ulam theorem}
The problem \textsc{$\varepsilon$-Borsuk--Ulam} is defined as follows. An {\em arithmetic circuit} is a representation of a continuous function defined by a few elementary operations (addition, multiplication, maximum, etc.); see, e.g., ~\cite{deligkas2021computing}.

\medskip

\noindent \textsc{$\varepsilon$-Borsuk--Ulam}:

{\bf Input.} An arithmetic circuit $f\colon \R^{d+1} \rightarrow \R^d$; a constant $\varepsilon\in\R$.

{\bf Task.} Find a point $x \in S^d$ such that $\|f(x) - f(-x)\|_{\infty} \leq \varepsilon$.

\medskip

The problem \textsc{$\varepsilon$-Borsuk--Ulam} is \PPA-complete~\cite{deligkas2021computing}. The complexity class \PPA{} is one of the fundamental subclasses of \TFNP, the \NP-search problems that always have a solution. It has been introduced, with other classes, by Papadimitriou in 1994~\cite{papadimitriou1994complexity} and is defined as those \TFNP{} problems that can be reduced to the problem of finding another odd-degree vertex in a graph which is given with a first odd-degree vertex. \textsc{$\varepsilon$-Borsuk--Ulam} being \PPA-complete means, as for \NP-complete problems, that it is unlikely that the problem can be solved in polynomial time. (Note that \textsc{$\varepsilon$-Borsuk--Ulam} is not in \NP{} since it is not a decision problem.) Now, we define a computational problem related to the inequality $\coind(\B(G))+2 \leq \chi(G)$. A {\em $\Z_2$-circuit} $g$ is a circuit such that $g(-x)=-g(x)$ for all $x$ in the domain of $g$ (this property can be enforced syntactically).

\medskip

\noindent\textsc{Coind-lower-bound}:

{\bf Input.} A graph $G$, along with a (non-necessarily proper) coloring using at most $d+1$ distinct colors; an arithmetic $\Z_2$-circuit $g\colon\R^{d+1}\rightarrow \R^{V(G)}$; a constant $\delta\in\R$.

{\bf Task.} Find one of the following.
\begin{enumerate}
\item\label{item:zero} A point $x \in S^d$ such that $\|g(x)\|_{\infty} \leq \delta$.
\item\label{item:non-embed} A point $x \in S^d$ such that 
\[
\left\{s \cdot v\colon s \in \{+,-\},\,  v \in V(G), \,  s g_v(x) > \frac 1 {2(|V(G)|+1)}\delta\right\}
\] is not a simplex of $\B(G)$.
\item\label{item:monoc} A monochromatic edge of $G$.
\end{enumerate}

\medskip

The rationale of the problem is the following. It considers implicitly that $\B(G)$ is embedded with the two images of the orbit of each vertex $v$ located at $e_v$ and $-e_v$ (the $e_v$ being the unit vectors of the standard basis of $\R^{V(G)}$). If the coloring is proper, the inequality $\coind(\B(G))+2 \leq \chi(G)$ implies that there is no continuous $\Z_2$-map $S^d\rightarrow \B(G)$. Thus, any continuous $\Z_2$-map $\R^{d+1}\rightarrow\R^{V(G)}$ (as $g$) does not map $S^d$ to $\B(G)$ (expressed in two ways via type-\eqref{item:zero} solutions and type-\eqref{item:non-embed} solutions). Working with arithmetic circuit imposes some technicalities.

\begin{theorem}\label{thm:complex-coind}
    \textsc{Coind-lower-bound} is polynomially equivalent to \textsc{$\varepsilon$-Borsuk--Ulam}.
\end{theorem}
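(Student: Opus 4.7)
The plan is to prove the two directions separately, using $G = K_{d+1}$ for the easy direction \textsc{$\varepsilon$-Borsuk--Ulam} $\to$ \textsc{Coind-lower-bound} (exploiting that $\B(K_{d+1})$ is the cross-polytope boundary minus the two all-positive and all-negative facets), and the color-sum map associated with a proper $(d+1)$-coloring for the converse (exploiting that such a coloring yields the simplicial $\Z_2$-map $\B(G)\to\B(K_{d+1})$ underlying the bound $\coind(\B(G))\leq d-1$). Concretely, for the easy direction I would color $G=K_{d+1}$ by the identity, set $\tilde f(x)=(f(x)-f(-x))/2$, and define the $\Z_2$-circuit $g(x)=(\tilde f_1(x),\ldots,\tilde f_d(x),-\sum_{i=1}^d \tilde f_i(x))$, whose image lies in the hyperplane $\{\sum_v y_v=0\}$; setting $\delta=\varepsilon/2$, no type-3 output is possible (the coloring is proper), and a type-2 output would force the signed support to equal $\{+v\colon v\in V\}$ or $\{-v\colon v\in V\}$, the only two non-simplices of $\B(K_{d+1})$, both of which are incompatible with $\sum_v g_v(x)=0$. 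The oracle therefore returns a type-1 point $x$, from which $\|\tilde f(x)\|_\infty\leq\|g(x)\|_\infty\leq\delta$ and hence $\|f(x)-f(-x)\|_\infty\leq 2\delta=\varepsilon$.

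For the converse, I would first scan the edges of $G$ for a monochromatic one; any such edge is a type-3 certificate. Otherwise $c$ is proper. Let $L\colon\R^{V(G)}\to\R^{d+1}$ be the linearization $L(y)_j=\sum_{c(v)=j}y_v$ of the simplicial map $\B(G)\to\B(K_{d+1})$ induced by $c$, let $\pi(y)_j=y_j-\tfrac{1}{d+1}\sum_i y_i$ for $j\in[d]$ be the odd linear projection with kernel $\R\mathbf 1$, and set $f=\pi\circ L\circ g$ and $\varepsilon_{BU}=\delta/(2d(|V(G)|+1))$. The oracle returns $x\in S^d$ with $\|f(x)\|_\infty\leq\varepsilon_{BU}/2$, which translates into the spread bound $|L(g(x))_j-L(g(x))_k|\leq d\varepsilon_{BU}$ for all $j,k$. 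I would then return $x$ as type 1 if $\|g(x)\|_\infty\leq\delta$, or as type 2 if $T_\tau(g(x))$ fails to be a simplex of $\B(G)$.

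The decisive claim, ensuring that one of these two tests always succeeds, is the following. Suppose for contradiction that $g_v(x)>\delta$ for some $v\in A'$ and that $T_\tau(g(x))=A'\uplus A''$ is a simplex of $\B(G)$. The positive contributions from $A'\cap c^{-1}(c(v))$, dominated by the single term $g_v(x)>\delta$, together with the complement in $c^{-1}(c(v))$ (on which $|g_{v'}(x)|\leq\tau$, contributing at most $|V|\tau$ in absolute value), give $L(g(x))_{c(v)}>\delta-|V|\tau$. Properness of $c$ combined with completeness of $G[A',A'']$ forces $c(A')\cap c(A'')=\varnothing$, and the existence of a common neighbor of $A'$ in $G$ forces $c(A')\subsetneq[d+1]$; one therefore finds a color $j^-$ with $L(g(x))_{j^-}\leq|V|\tau$, either by taking $j^-\notin c(A'\cup A'')$ or $j^-\in c(A'')$ (in the second case the negative contributions dominate and yield $L(g(x))_{j^-}\leq(|V|-2)\tau$). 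Hence $L(g(x))_{c(v)}-L(g(x))_{j^-}>\delta-2|V|\tau=\delta/(|V|+1)$, while the BU bound forces this difference to be at most $d\varepsilon_{BU}=\delta/(2(|V|+1))$---a contradiction. The main obstacle is therefore not topological but quantitative: the factor $1/(2(|V|+1))$ in the definition of $\tau$ is tuned so that $\delta-2|V|\tau=\delta/(|V|+1)$ stays strictly positive, which is the polynomially small window into which $\varepsilon_{BU}$ must fit.
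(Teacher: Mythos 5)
Your proposal is correct and, for the hard direction, takes a genuinely different and more elementary route than the paper.

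For the easy direction both you and the paper reduce to $G=K_{d+1}$ with the identity coloring, exploiting that the only non-simplices of $\B(K_{d+1})$ are the all-positive and all-negative signed supports. The paper picks $g=\imath\circ(f(-\cdot)-f(\cdot))$ with $\imath$ the inclusion $\R^d\hookrightarrow\R^{d+1}$ that forces one coordinate of $g$ to vanish, which rules out a type-(2) answer because $\pm(d+1)$ can never appear in the thresholded support. You instead symmetrize $f$ and append a coordinate so that the image of $g$ lies in the hyperplane $\{\sum_v y_v=0\}$, which rules out type-(2) because all components of $g(x)$ can never simultaneously exceed, or all fall below, a strictly positive threshold. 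Both tricks do exactly the same job; your factor-of-two bookkeeping ($\delta=\varepsilon/2$) is slightly less tight than the paper's $\delta=\varepsilon$ but immaterial.

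For the hard direction the approaches diverge substantially. The paper goes through two auxiliary lemmas: Lemma~\ref{lem:bk} builds an explicit simplicial $\Z_2$-map $\sd(\B(K_{d+1}))\to\partial\Diamond^d$ by a sign-change count along each simplex, and Lemma~\ref{lem:tech} homogenizes its affine extension into a piecewise-affine $\Z_2$-circuit $h$ with the norm-control property $\|z\|_\infty\le d\|h(z)\|_\infty$ whenever the components of $z$ are not all of one sign; the reduction then composes $f=h\circ\bar c\circ g$. Your reduction replaces $h\circ\bar c$ by the simple linear composite $\pi\circ L$, where $L$ is the colour-sum linearization and $\pi$ the odd linear projection $\R^{d+1}\to\R^d$ with kernel $\R\mathbf 1$, and $\varepsilon_{\mathrm{BU}}=\delta/(2d(|V(G)|+1))$. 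The crux is then a direct quantitative argument: the oracle's answer forces the colour-sums $L(g(x))_j$ to have spread at most $d\varepsilon_{\mathrm{BU}}$; on the other hand, if some $|g_v(x)|>\delta$ while the thresholded signed support $\sigma=A'\uplus A''$ is a simplex of $\B(G)$, then properness of $c$ together with $\CN(A')\neq\varnothing$ forces $c(A')\subsetneq[d+1]$, so there is a colour $j^-\notin c(A')$ whose sum $L(g(x))_{j^-}$ stays $\le|V|\tau$ while $L(g(x))_{c(v)}>\delta-|V|\tau$, yielding a spread strictly larger than $\delta/(|V|+1)$ and a contradiction. Your spread bound $|L(g(x))_j-L(g(x))_k|\leq d\varepsilon_{\mathrm{BU}}$ does hold (the $(d+1)$-st coordinate of $L(g(x))$, not directly controlled by $\pi$, is bounded via the centering identity), and the arithmetic $\delta-2|V|\tau=\delta/(|V|+1)>\delta/(2(|V|+1))=d\,\varepsilon_{\mathrm{BU}}$ checks out. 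What your route buys is that it avoids all simplicial and cross-polytope machinery: the only map into $\R^d$ is linear, the norm control is replaced by an explicit pigeonhole on colour classes, and there is no need to extend a simplicial map onto the two missing facets or argue about which cone a point lies in. The paper's Lemma~\ref{lem:bk} is still of independent interest (it gives the canonical $\Z_2$-map witnessing $\ind(\B(K_{d+1}))\le d-1$), but for the reduction itself your linear-algebraic argument is shorter and sidesteps the delicate step in the paper that applies Lemma~\ref{lem:tech} under the hypothesis that the components of $z$ are not all of one sign.
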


Since the proof is a bit long and technical, it is given in the appendix.

\subsubsection*{Schrijver's graphs computationally}
Schrijver's graphs $\SG(n,k)$ have been introduced in Section~\ref{subsec:spec-graph}. The problem \textsc{Schrijver} is defined as follows.  A {\em Boolean circuit} is a representation of a Boolean function defined by a few elementary operations (AND, OR, NOT).

\medskip

\noindent \textsc{Schrijver}:

{\bf Input.} Two integers $n$ and $k$ such that $n\geq 2k-1$; a Boolean circuit 
\[
c\colon \big\{2\text{-stable $k$-subsets of $[n]$}\big\} \rightarrow [n-2k+1]
\]
representing a coloring of $\SG(n,k)$.

{\bf Task.} Find a monochromatic edge of $\SG(n,k)$, i.e., two disjoint $2$-stable $k$-subsets $S,T$ of $[n]$ such that $c(S)=c(T)$.

\medskip

Since $\chi(\SG(n,k))=n-2k+2$, the coloring $c$ cannot be proper and a monochromatic edge always exists. Haviv proved the following result, which means that \textsc{$\varepsilon$-Borsuk--Ulam} can be polynomially reduced from and to \textsc{Schrijver}. The proof makes these reductions explicit.

\begin{theorem}[Haviv~\cite{haviv2022complexity}]
\textsc{Schrijver} is \PPA-complete.
\end{theorem}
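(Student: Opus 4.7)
The plan is to establish both \PPA-membership and \PPA-hardness by exhibiting polynomial reductions between \textsc{Schrijver} and \textsc{$\varepsilon$-Borsuk--Ulam}, leveraging the topological structure of $\SG(n,k)$ given by Theorem~\ref{thm:schultz}.

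\textbf{Membership in \PPA{} (reduction from \textsc{Schrijver} to \textsc{$\varepsilon$-Borsuk--Ulam}).} Given a Schrijver instance with a Boolean circuit $c$ coloring $\SG(n,k)$ with $n-2k+1$ colors, I would build an arithmetic $\Z_2$-circuit $f\colon\R^{n-2k+1}\rightarrow\R^{n-2k}$ whose near-antipodal coincidences encode monochromatic edges of $\SG(n,k)$. The construction factors as $f=\pi\circ\psi\circ\varphi$, where: $\varphi\colon S^{n-2k+1}\to\B(\SG(n,k))$ realizes the $\Z_2$-homotopy equivalence coming from Theorem~\ref{thm:schultz} (a sufficiently fine simplicial approximation is what can actually be encoded into an arithmetic circuit of polynomial size, using the standard signed-subset description of $S^{n-2k+1}$ whose maximal faces are indexed by signed $2$-stable $k$-subsets of $[n]$); $\psi$ is the simplicial $\Z_2$-map $\B(\SG(n,k))\to\B(K_{n-2k+1})$ induced by $c$; and $\pi$ is a $\Z_2$-embedding of $\B(K_{n-2k+1})$ into $\R^{n-2k+1}\setminus\{0\}$ followed by the antipodal projection onto $S^{n-2k}$. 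A solution of \textsc{$\varepsilon$-Borsuk--Ulam} for $f$, for an $\varepsilon$ chosen polynomially small in the circuit size, locates a point whose two antipodal images land in a pair of simplices of $\B(K_{n-2k+1})$ that are $\Z_2$-related; tracing back through $\varphi$ and $\psi$ produces two disjoint $2$-stable $k$-subsets receiving the same color, i.e., a monochromatic edge.

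\textbf{\PPA-hardness (reduction from \textsc{$\varepsilon$-Borsuk--Ulam} to \textsc{Schrijver}).} Given $(f,\varepsilon)$ with $f\colon\R^{d+1}\to\R^d$, I would choose $n,k$ with $n-2k+2=d+2$ (for instance $n=2k+d$ with $k$ large enough to make the discretization fine), and construct a coloring $c$ of $\SG(n,k)$ as follows. Using the signed-subset model of $S^{d+1}$, every $2$-stable $k$-subset $S$ of $[n]$ determines a small region $R_S\subset S^{d+1}$; we then evaluate the arithmetic circuit $f$ on a canonical representative $x_S\in R_S$, and set $c(S)\in[d+1]$ to record the index of a maximal coordinate of $f(x_S)-f(-x_S)$ together with its sign, falling back to a default color when $\|f(x_S)-f(-x_S)\|_\infty\leq\varepsilon$ (in which case $x_S$ is already a solution, returned directly). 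A monochromatic edge $\{S,T\}$ of $\SG(n,k)$ consists of two disjoint $2$-stable $k$-subsets, so the regions $R_S$ and $R_T$ lie on antipodal portions of $S^{d+1}$; the matching of color indices between $S$ and $T$ then forces $f(x_S)$ and $f(x_T)=f(-x_{-S})$ to have a common large-magnitude coordinate with compatible sign, and refining the discretization makes the resulting point a valid solution of \textsc{$\varepsilon$-Borsuk--Ulam}.

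\textbf{Main obstacle.} The hard part is the second reduction: producing a coloring that is simultaneously (i)~computable by a polynomial-size Boolean circuit from the arithmetic circuit for $f$, (ii)~respects the $2$-stability constraint (so that one really uses $\SG(n,k)$ and not merely $\KG(n,k)$), and (iii)~guarantees that any monochromatic edge yields an $\varepsilon$-antipodal point, rather than a spurious coincidence introduced by the discretization. This requires a careful combinatorial model of $S^{d+1}$ whose antipodal pairs correspond exactly to disjoint $2$-stable $k$-subsets; the natural candidate is the octahedral/signed-subset realization used in the combinatorial proofs of Schrijver's theorem, and controlling the resolution versus the circuit size of $f$ is the main quantitative bookkeeping step.
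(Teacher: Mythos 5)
The survey paper does not prove this theorem; it is cited from Haviv~\cite{haviv2022complexity}. So your sketch is being assessed on its own merits, not against a proof given in the text. Your overall plan---establish \PPA-membership and \PPA-hardness by reducing between \textsc{Schrijver} and \textsc{$\varepsilon$-Borsuk--Ulam}---is the right high-level strategy. But there are two concrete problems in the membership direction and one structural one in the hardness direction.

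In the membership reduction you posit a $\Z_2$-circuit factoring as $f=\pi\circ\psi\circ\varphi$ with $\varphi\colon S^{n-2k+1}\to\B(\SG(n,k))$. By Theorem~\ref{thm:schultz}, $\B(\SG(n,k))$ is $\Z_2$-homotopy equivalent to $S^{n-2k}$, so a continuous $\Z_2$-map $S^{n-2k+1}\to\B(\SG(n,k))$ cannot exist (that is precisely the Borsuk--Ulam theorem). The dimensions you state for the output ($f\colon\R^{n-2k+1}\to\R^{n-2k}$) match the $\B_0$ story, not the $\B$ story: you need $\B_0(\SG(n,k))\simeq_{\Z_2}S^{n-2k+1}$ and $\B_0(K_{n-2k+1})\simeq_{\Z_2}S^{n-2k}$. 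So either the sphere should be $S^{n-2k}$ or the complexes should be $\B_0$; as written the composite $\varphi$ does not exist. More seriously, even after fixing the dimensions, the load-bearing claim is that the $\Z_2$-homotopy equivalence from Schultz's theorem ``can actually be encoded into an arithmetic circuit of polynomial size.'' This is precisely what must be proved and is not a consequence of Schultz's theorem, which is a non-constructive topological existence statement. The box complex $\B_0(\SG(n,k))$ has exponentially many vertices (the $2$-stable $k$-subsets of $[n]$), so the circuit would have to decide, for a given point of the source sphere, which simplex of the box complex it maps into, without materializing an exponentially long vector---and no such explicit description follows from the homotopy-equivalence theorem. The known route to \PPA-membership for \textsc{Schrijver} uses an explicit, polynomially computable construction of Gale/Tucker flavor that associates a $2$-stable $k$-subset to each point of a low-dimensional sphere directly, bypassing the box complex and Schultz's theorem entirely; that explicitness is exactly the ingredient your sketch assumes but does not supply.

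For the hardness direction you honestly flag the ``main obstacle,'' but as stated it remains a proposal rather than an argument. The difficulty is not only bookkeeping: you need a discretization of $S^{d+1}$ in which (i) pairs of disjoint $2$-stable $k$-subsets are exactly the antipodal pairs of cells, and (ii) any monochromatic edge of the constructed coloring certifies a genuine $\varepsilon$-antipodal point of $f$, with no spurious monochromatic edges introduced by the discretization. Designing a gadget that achieves both simultaneously is the bulk of the work in Haviv's hardness reduction, and your sketch does not indicate how to rule out those spurious coincidences. Without that, the sketch does not yet constitute a proof of \PPA-hardness.
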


The inequality $\chi(\SG(n,k))\geq n-2k+2$ can be established without too much work from all topological lower bounds below the dashed line of Figure~\ref{fig}, except from $\cd(\HH)$, which provides a lower bound of $n-4k+4$ when $\HH$ is the natural Kneser representation ($[n]$ being its vertex set, and the $2$-stable $k$-subsets being the edges). This again shows that these topological lower bounds somehow implies the Borsuk--Ulam theorem.

Regarding the colorability defect, we note that $\cd(\HH)$ is at most $n-2k+1$ for every Kneser representation $\HH$ of $\SG(n,k)$, at least when $n$ is odd: indeed Corollary 2 in~\cite{alishahi2017strengthening} shows that when the colorability defect lower bound is tight for some Kneser representation, then the circular chromatic number is equal to the usual chromatic number, which is not the case for Schrijver graphs with odd $n$. This might indicate that the colorability defect lower bound is ``weaker'' than the Borsuk--Ulam theorem, and may not require this latter theorem to be established.

\textsc{Schrijver} can be generalized by using a number $m$ of colors that can be smaller than $n-2k+1$. When $m$ is sufficiently small (of order $O(n/k)$), the problem becomes polynomial~\cite{haviv2023finding}. The maximal $m$, as a function of $n$ and $k$, for which the problem remains polynomial is not known. Finding this value would help identify the level of lower bounds for which the Borsuk--Ulam theorem is essential.

Another computational problem that can be considered is \textsc{Kneser}, defined just by replacing in \textsc{Schrijver} the domain of the Boolean circuit $c$ by the full collection of $k$-subsets of $[n]$. The following problem is probably among the most important problems at the intersection of topological combinatorics and complexity theory.

\begin{question}\label{q:kneser}
    Is \textsc{Kneser} \PPA-complete?
\end{question}

 A negative answer would imply that more elementary proofs of the Lov\'asz theorem and of the colorability defect lower bound than those known could be possible.

\section{Categorical product of graphs and topological spaces}\label{sec:product}

Given two graphs $G$ and $H$, their {\em categorical product} $G\times H$ is the graph whose vertices is the set $V(G) \times V(H)$ and for which $(u,v)(u',v')$ forms an edge whenever $uu'$ is an edge of $G$ and $vv'$ is an edge of $H$. This product has attracted attention especially because it appears in the famous Hedetniemi conjecture~\cite{hedetniemi1966homomorphisms} stating that $\chi(G\times H)$ is equal to the minimum of $\chi(G)$ and $\chi(H)$. This conjecture has recently been disproved by Shitov~\cite{shitov2019counterexamples}, but it is known to hold for some graphs for which topological bounds are tight, as we discuss now.

Consider all graphs for which a given topological lower bound is tight. We may then ask whether they satisfy Hedetniemi conjecture. The answer is known to be `yes' for some topological lower bounds~\cite{Dochtermann09,hajiabolhassan2016hedetniemi,hell1979introduction, simonyi2010topological}. The most general result in this line is the following theorem by Daneshpajouh, Karasev, and Volovikov~\cite{daneshpajouh2023hedetniemi} (immediate consequence of Corollary 2.6 of their paper), which implies all previous results of this kind because the corresponding topological lower bounds lie below the cohomological index lower bound in Figure~\ref{fig}.

\begin{theorem}[Daneshpajouh, Karasev, and Volovikov~\cite{daneshpajouh2023hedetniemi}]\label{thm:cohom-hedet}
Consider two graphs $G$ and $H$ for which the cohomological index lower bound is tight. Then $\chi(G \times H) = \min\{\chi(G),\chi(H)\}$.
\end{theorem}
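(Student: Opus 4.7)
The plan is to handle the two inequalities separately. The bound $\chi(G \times H) \leq \min\{\chi(G), \chi(H)\}$ is immediate: composing an optimal coloring of $G$ with the projection $G \times H \to G$ yields a proper $\chi(G)$-coloring of $G \times H$, and likewise for $H$. For the reverse inequality I would use the basic bound $\chi(G \times H) \geq \hind(\B(G \times H))+2$ together with the tightness hypothesis, reducing the problem to proving the topological inequality
\[
\hind(\B(G \times H)) \geq \min\{\hind(\B(G)), \hind(\B(H))\}.
\]
Once this is in hand, tightness converts it into $\chi(G \times H) \geq \hind(\B(G \times H))+2 \geq \min\{\hind(\B(G))+2, \hind(\B(H))+2\} = \min\{\chi(G), \chi(H)\}$, closing the chain.

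The first step toward the key inequality is to identify the $\Z_2$-homotopy type of $\B(G \times H)$ as a product. By Theorem~\ref{thm:BH} one has $\B(\cdot) \simeq_{\Z_2} \triangle(\Hom(K_2, \cdot))$, and the universal property of the categorical product of graphs, applied to multihomomorphisms from $K_2$, yields a $\Z_2$-poset isomorphism $\Hom(K_2, G \times H) \cong \Hom(K_2, G) \times \Hom(K_2, H)$ with diagonal $\Z_2$-action on the right. Combined with the classical $\Z_2$-homotopy equivalence between the order complex of a product of finite posets and the product of their order complexes, this yields $\B(G \times H) \simeq_{\Z_2} \B(G) \times \B(H)$ with diagonal action. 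The second step is the product formula
\[
\hind(X \times Y) = \min\{\hind(X),\hind(Y)\}
\]
for free $\Z_2$-spaces $X, Y$ with diagonal action on the product. The $\leq$ direction follows immediately from the $\Z_2$-equivariance of the two projections together with the naturality of $\varpi_1$ under $\Z_2$-maps; the $\geq$ direction, which is the one we actually need, is the crux and can be extracted from the Leray--Serre spectral sequence of the Borel fibration $X \times Y \to (X \times Y)\times_{\Z_2} E\Z_2 \to B\Z_2$ combined with the K\"unneth theorem, or by invoking an existing product theorem in equivariant cohomology of Volovikov's type.

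The main obstacle is precisely the $\geq$ direction of this product formula. Unlike the coindex or the $\Z_2$-connectivity, the characteristic class $\varpi_1$ does not interact transparently with the diagonal action on a Cartesian product, and one really has to track the surviving power of $\alpha \in H^1(B\Z_2;\Z_2)$ through the multiplicative structure of the Borel cohomology of $X \times Y$. This is precisely why the hypothesis singles out tightness of the \emph{cohomological} index lower bound rather than, say, the ordinary index or the coindex bound: it is exactly the invariant $\hind$ for which a product formula of the required shape is available, making it the natural hypothesis to propagate along the categorical product.
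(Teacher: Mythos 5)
Your high-level structure matches the paper's: the easy inequality $\chi(G\times H)\leq\min\{\chi(G),\chi(H)\}$ from the projections, followed by the chain
\[
\chi(G\times H)\geq\hind(\B(G\times H))+2=\min\{\hind(\B(G))+2,\hind(\B(H))+2\}=\min\{\chi(G),\chi(H)\},
\]
using tightness and the two product-type facts: $\B(G\times H)\simeq_{\Z_2}\B(G)\times\B(H)$ (Theorem~\ref{thm:cat-prod}, due to Csorba) and $\hind(X\times Y)=\min\{\hind(X),\hind(Y)\}$ for triangulable free $\Z_2$-spaces (due to Daneshpajouh, Karasev, and Volovikov). However, your proposed derivation of the first of these facts contains a genuine error. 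You claim a $\Z_2$-\emph{poset isomorphism} $\Hom(K_2,G\times H)\cong\Hom(K_2,G)\times\Hom(K_2,H)$ coming from the universal property of the categorical product. That universal property applies to ordinary graph homomorphisms, not to multihomomorphisms: an element of $\Hom(K_2,G\times H)$ is a pair $(A',A'')$ of arbitrary disjoint nonempty subsets of $V(G)\times V(H)$ forming a biclique, and these subsets need not be rectangles $B\times C$. For example, with $G=H=K_3$, the pair $A'=\{(1,1),(2,2)\}$, $A''=\{(3,3)\}$ lies in $\Hom(K_2,G\times H)$ but is not in the image of the natural inclusion of $\Hom(K_2,G)\times\Hom(K_2,H)$, and the projection map sends both $\{(1,1),(2,2)\}$ and $\{(1,2),(2,1)\}$ to the same pair of projections, so it is not injective. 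The two posets are not isomorphic; what holds is only a $\Z_2$-\emph{homotopy equivalence}, and establishing it requires a real argument (Quillen's fiber theorem or a closure operator on the larger poset), which is precisely the content of Theorem~\ref{thm:cat-prod}. You should cite that theorem rather than attempt to derive it from a universal property that does not apply.

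One smaller point of framing: you suggest that $\hind$ is singled out because it is ``exactly the invariant for which a product formula of the required shape is available.'' In fact product formulas of the same shape hold for $\conn$, $\conn_{\Z_2}$, and $\coind$ as well (see the list in Section~\ref{sec:product}); the relevant point is that $\hind$ is the \emph{largest} of the lower bounds for which such a formula is known, so stating the theorem in terms of $\hind$-tightness subsumes the analogous statements for the weaker bounds.
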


The proof of Theorem~\ref{thm:cohom-hedet} relies on the following result, which, according to Simonyi and Zsb\'an~\cite[Remark 3]{simonyi2010topological}, is a personal communication of Csorba.

\begin{theorem}[{Csorba}]\label{thm:cat-prod}
For every pair of graphs $G,H$, the complexes $\B(G \times H)$ and $\B(G) \times \B(H)$ are $\Z_2$-homotopy equivalent.
\end{theorem}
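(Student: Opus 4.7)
The plan is to construct an equivariant projection map between face posets and invoke Quillen's Theorem A. Let $\pi_G\colon V(G\times H)\to V(G)$ and $\pi_H\colon V(G\times H)\to V(H)$ denote the coordinate projections, and define
\[
\Phi\colon \F(\B(G\times H)) \longrightarrow \F(\B(G))\times\F(\B(H)), \quad A'\uplus A'' \longmapsto \bigl(\pi_G(A')\uplus\pi_G(A''),\;\pi_H(A')\uplus\pi_H(A'')\bigr).
\]
Well-definedness is routine: because $G$ and $H$ are loopless, the $G\times H$-biclique condition on $(A',A'')$ forces the projections to be disjoint and makes $G[\pi_G A',\pi_G A'']$ and $H[\pi_H A',\pi_H A'']$ complete, while $\CN_{G\times H}(A)\neq\varnothing$ holds iff both $\CN_G(\pi_G A)$ and $\CN_H(\pi_H A)$ are nonempty. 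The map $\Phi$ is clearly order-preserving and commutes with the swap $\Z_2$-actions (the action on the target being diagonal).

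Fix $q=(\alpha'\uplus\alpha'',\,\beta'\uplus\beta'')$ in the target poset. The preimage under $\Phi$ of the principal order ideal below $q$ consists of those $A'\uplus A''$ with $A'\subseteq\alpha'\times\beta'$ and $A''\subseteq\alpha''\times\beta''$, all remaining conditions of $\B(G\times H)$ being automatic (in particular $\CN_{G\times H}(A')\supseteq\CN_G(\alpha')\times\CN_H(\beta')\neq\varnothing$, and similarly for $A''$). This poset has $(\alpha'\times\beta')\uplus(\alpha''\times\beta'')$ as maximum element and is therefore contractible. Quillen's Theorem A then yields a homotopy equivalence on order complexes. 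Together with the natural homeomorphism $|\triangle(P\times Q)|\cong|\triangle(P)|\times|\triangle(Q)|$ (which is compatible with the product $\Z_2\times\Z_2$-action, hence with its diagonal restriction) and the identification $|\triangle(\F(K))|\cong|\sd(K)|\cong|K|$, we obtain an ordinary homotopy equivalence $\B(G\times H)\simeq \B(G)\times\B(H)$.

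The main obstacle I anticipate is upgrading this to a $\Z_2$-homotopy equivalence. Since the swap action fixes no pair of disjoint nonempty sets whose union is nonempty, both sides are free $\Z_2$-CW complexes; the equivariant Whitehead theorem for free $\Z_2$-CW complexes then converts any equivariant ordinary homotopy equivalence into a $\Z_2$-homotopy equivalence, closing the argument. The essential conceptual content is thus the fiber identification above: once one recognizes that the preimage of a principal order ideal decomposes as a product of order ideals in $2^{\alpha'\times\beta'}$ and $2^{\alpha''\times\beta''}$, the remainder reduces to standard facts about freeness of the $\Z_2$-action and compatibility of order-complex formation with products of posets.
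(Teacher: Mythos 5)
The paper gives no proof of this theorem---it is credited to a personal communication of Csorba---so there is nothing in the paper to compare against. What can be said is that your argument has a genuine gap in the Quillen fiber verification. You claim that $\Phi^{-1}(Q_{\preceq q})$ is nonempty with maximum element $(\alpha'\times\beta')\uplus(\alpha''\times\beta'')$, but that element can be the empty face. Take any $q=(\alpha'\uplus\alpha'',\,\beta'\uplus\beta'')$ with $\alpha'=\varnothing=\beta''$ and $\alpha'',\beta'\neq\varnothing$; this is a legitimate element of $\F(\B(G))\times\F(\B(H))$, yet $\alpha'\times\beta'=\alpha''\times\beta''=\varnothing$, so the only candidate preimage is $\varnothing\uplus\varnothing$, which does not belong to $\F(\B(G\times H))$. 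Already for $G=H=K_2$ and $q=(\{+1\},\{-1\})$ the fiber is empty, hence not contractible, and Quillen's fiber theorem does not apply. The source of the trouble is that the product face poset naturally carries a $\Z_2\times\Z_2$-action and contains elements whose two sign patterns are incompatible with the diagonal; your projection $\Phi$ never reaches these elements nor anything below them.

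One clean repair is to pass through $\Hom(K_2,\cdot)$ via Theorem~\ref{thm:BH}: there both parts of a multihomomorphism are required nonempty, so these troublesome $q$'s never arise. The projection $\phi\colon(A',A'')\mapsto\bigl((\pi_GA',\pi_GA''),(\pi_HA',\pi_HA'')\bigr)$ and the splicing map $\psi\colon\bigl((\alpha',\alpha''),(\beta',\beta'')\bigr)\mapsto(\alpha'\times\beta',\alpha''\times\beta'')$ are order-preserving $\Z_2$-maps satisfying $\phi\circ\psi=\operatorname{id}$ and $\psi\circ\phi\succeq\operatorname{id}$; the monotone-retraction lemma (the same Proposition~12 of~\cite{vzivaljevic2005wi} invoked in the paper's proof of Theorem~\ref{thm:join-B}) then yields a $\Z_2$-homotopy equivalence $\triangle(\Hom(K_2,G\times H))\simeq\triangle\bigl(\Hom(K_2,G)\times\Hom(K_2,H)\bigr)$, after which your closing steps (the equivariant homeomorphism $|\triangle(P\times Q)|\cong|\triangle(P)|\times|\triangle(Q)|$ and Theorem~\ref{thm:BH}) finish the argument. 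Alternatively, one could keep $\Phi$ but restrict its target to the subposet of $q$'s with nonempty fiber; but then you owe a separate proof that this inclusion is a $\Z_2$-homotopy equivalence, which is precisely the step Quillen's theorem was supposed to render unnecessary.
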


(The product $\B(G) \times \B(H)$ can be interpreted as a CW-complex, whose cells are the products of the simplices of $\B(G)$ and $\B(H)$, or simply as the Cartesian product of $\B(G)$ and $\B(H)$ seen as topological spaces, since only homotopy equivalence is at stake.) A similar relation does not hold for $\B_0(\cdot)$, as elementary examples show easily.

\begin{example}
    Consider the case where $G$ is an edge and $H$ a triangle. Then $G \times H$ is the $1$-skeleton of a prism. The complexes $\B(G)$ and $\B(H)$ are $\Z_2$-homotopy equivalent respectively to $S^0$ and $S^1$ (Example~\ref{ex:box-complete}). Since $G \times H$ is $C_6$, the complex of $\B(G\times H)$ is $\Z_2$-homeomorphic to the disjoint union of two copies of $S^1 \times [0,1]$ (Example~\ref{ex:box-C2n}, the stronger $\Z_2$-homeomorphism being immediate), which is indeed $\Z_2$-homotopy equivalent to the product of $S^0$ and $S^1$.
\end{example}

The proof of Theorem~\ref{thm:cohom-hedet} also relies on a Hedetniemi-like relation for the cohomological index of $\Z_2$-spaces. Most previous results on lower bounds below the cohomological index lower bound in Figure~\ref{fig} also required similar relations. Here are all known relations of this type:
\begin{itemize}
    \item For every pair $X,Y$ of topological spaces, we have 
    \[
    \conn(X \times Y) = \min\{\conn(X),\conn(Y)\}
    \]
    (well-known).
    \item For every pair $X,Y$ of topological spaces, we have 
    \[
    \conn_{\Z_2}(X \times Y) = \min\{\conn_{\Z_2}(X),\conn_{\Z_2}(Y)\}
    \]
    (K\"unneth's formula).
    \item For every pair of free $\Z_2$-spaces $X,Y$, we have 
    \[
    \coind(X \times Y) = \min\{\coind(X),\coind(Y)\}
    \]
    (easy to establish).
    \item For every pair $X,Y$ of free $\Z_2$-spaces that admit $\Z_2$-triangulations, we have 
    \[
    \hind(X \times Y) = \min\{\hind(X),\hind(Y)\}
    \]
    (proved by Daneshpajouh, Karasev, and Volovikov~\cite{daneshpajouh2023hedetniemi}).
\end{itemize}
Whether similar relations hold for the index parameter or for the cross-index parameter is open (see ~\cite{bui2023topological, matsushita2019index,wrochna2019inverse}).\label{page:hedet-param-top} A closely related open question is the following.

\begin{question}\label{q:hedet}
For which parameters $\param$ of Figure~\ref{fig}, do we have
\[
\param(G\times H) = \min\{\param(G),\param(H)\}\quad ?
\]
\end{question}
From the previous discussion (except for the clique number, but it is then immediate), we have this Hedetniemi-like relation when 
\[
\param(\cdot)\in\{\omega(\cdot),\coind(\B(\cdot)),\conn(\B(\cdot)),\conn_{\Z_2}(\B(\cdot)),\hind(\B(\cdot))\}\, .
\]
But this is open for all other bounds in Figure~\ref{fig}.\label{page:hedet-paramG}

\section{Joins}\label{sec:join}

Join is an important operation in topology, and it is especially useful for the study of topological bounds. 

\subsection{Preliminaries on the join and suspension operations in topology}

The {\em join} of two simplicial complexes $\K$ and $\L$, denoted by $\K * \L$, is the simplicial complex whose vertices are the elements of $V(\K) \uplus V(\L)$ and whose simplices are all $A \uplus B$ with $A \in \K$ and $B \in \L$. Remark that the join operation is commutative and associative (up to renaming the vertices). Since $\K * S^0$ is homeomorphic to the suspension of $\K$ (see Section~\ref{subsec:another} for the definition of the suspension of a topological space), the simplicial complex $\K * S^0$ is also called the suspension of $\K$ and denoted by $\susp(\K)$.

The join operation behaves well with respect to many parameters.
Regarding topological connectivity, we have the following relation, originally proved by Milnor~\cite{milnor1956construction}:
\begin{equation}\label{eq:conn}
\conn(\K * \L) \geq \conn(\K) + \conn(\L) +2 \, .
\end{equation}
In the case of $\Z_2$-connectivity, the K\"unneth formula provides directly (we are working with coefficients in a field):
\begin{equation}\label{eq:connZ2}
\conn_{\Z_2}(\K * \L) = \conn_{\Z_2}(\K) + \conn_{\Z_2}(\L) +2 \, .
\end{equation}

The special cases when $\L$ is $S^0$---the suspension operation---is useful in many situations, as this can already be seen by the present paper. For instance, equality~\eqref{eq:conn} implies that
\begin{equation}\label{eq:conn-susp}
\conn(\susp(\K)) \geq \conn(\K) + 1\, .
\end{equation}
In the case of homological connectivity for coefficients in a field, we have equality; this is a direct consequence of equality~\eqref{eq:connZ2} in the case of $\Z_2$. We have even the following equality (see~\cite[Exercise 32]{hatcher2005algebraic}):
\begin{equation}\label{eq:homo-susp}
\widetilde{H}_{i+1}(\susp(\K),\Z) = \widetilde{H}_i(\K,\Z) \, .
\end{equation}

We also have equality in case of the cohomological index~\cite{daneshpajouh2023hedetniemi}:
\begin{equation}\label{eq:hind}
\hind(\K * \L) = \hind(\K) + \hind(\L) +1 \, .
\end{equation}

For the index and coindex, the following is immediate:
\begin{equation}\label{eq:coind}
\coind(\K * \L) \geq \coind(\K) + \coind(\L) +1 \qquad\mbox{and} \qquad \ind(\K * \L) \leq \ind(\K) + \ind(\L) +1 \, .
\end{equation}

We finish this section by a short discussion on inequality~\eqref{eq:conn-susp} because it is sometimes misstated as an equality in the combinatorial literature (and even the more general inequality~\eqref{eq:conn} is sometimes stated erroneously as an equality). The gap can be arbitrarily large. We state it and provide a full proof.

\begin{proposition}\label{prop:susp}
For every integer $n\geq 3$, there is an $n$-dimensional simplicial complex $\K_n$ such that $\conn(\susp(\K_n))\geq n$ and $\conn(\K_n) = 0$. 
\end{proposition}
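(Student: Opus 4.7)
The plan is to construct $\K_n$ as the wedge of the standard $n$-simplex with a $2$-dimensional acyclic simplicial complex having non-trivial fundamental group. For the latter, I take the presentation $2$-complex $K$ of the balanced presentation
\[
2I=\langle a,b,c \mid a^{2}(abc)^{-1},\ b^{3}(abc)^{-1},\ c^{5}(abc)^{-1}\rangle
\]
of the binary icosahedral group. This $K$ has one $0$-cell, three $1$-cells, and three $2$-cells. Writing the three relations in abelianized form, the cellular boundary $\partial_{2}\colon \Z^{3}\to \Z^{3}$ has matrix
\[
\begin{pmatrix} 1 & -1 & -1\\ -1 & 2 & -1\\ -1 & -1 & 4\end{pmatrix},
\]
whose determinant equals $-1$. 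Hence $\partial_{2}$ is a $\Z$-isomorphism, giving $H_{1}(K)=H_{2}(K)=0$, while $\pi_{1}(K)=2I\neq 0$. After barycentric subdivision, $K$ may be assumed simplicial.

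Define $\K_n=K\vee \Delta^{n}$, obtained by identifying a vertex of $K$ with a vertex of $\Delta^{n}$. Then $\dim \K_n=\max(\dim K,n)=n$, the complex is path-connected, and by van Kampen $\pi_{1}(\K_n)\cong \pi_{1}(K)\ast \pi_{1}(\Delta^{n})\cong 2I\neq 0$; thus $\K_n$ is not simply connected and $\conn(\K_n)=0$. Since reduced homology splits over wedges, $\widetilde H_{i}(\K_n)=\widetilde H_{i}(K)\oplus \widetilde H_{i}(\Delta^{n})=0$ for every $i\geq 0$, so $\K_n$ is acyclic.

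For the suspension, path-connectedness of $\K_n$ implies that $\susp(\K_n)$ is simply connected (van Kampen applied to the two contractible cones covering $\susp(\K_n)$ and intersecting in $\K_n$). The suspension isomorphism~\eqref{eq:homo-susp} then yields $\widetilde H_{i}(\susp(\K_n))=\widetilde H_{i-1}(\K_n)=0$ for every $i\geq 1$, so the Hurewicz and Whitehead theorems imply that $\susp(\K_n)$ is contractible; in particular $\conn(\susp(\K_n))=+\infty\geq n$, which is far stronger than required.

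The only non-routine ingredient is the acyclic $2$-complex $K$ with non-trivial $\pi_{1}$, which the binary icosahedral group supplies through its balanced presentation (a classical construction available for any finite superperfect group). This is exactly what drives the gap in~\eqref{eq:conn-susp}: the suspension trivializes the fundamental group while preserving the (trivial) reduced homology, whereas $\K_n$ itself retains its failure of simple connectedness, pinning $\conn(\K_n)$ at $0$.
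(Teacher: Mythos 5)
Your proof is correct, but it takes a genuinely different route from the paper's. The paper uses an $n$-dimensional smooth homology sphere with non-trivial perfect fundamental group (existence supplied by Kervaire and related work), triangulated via Whitehead; that complex $\K$ has $\widetilde H_i(\K,\Z)=0$ for $i<n$ but $\widetilde H_n(\K,\Z)\cong\Z$, so $\susp(\K)\simeq S^{n+1}$ and $\conn(\susp(\K))=n$ exactly. You instead take the fixed $2$-dimensional, acyclic presentation complex $K$ of a balanced presentation of the binary icosahedral group $2I$ (a superperfect group), and inflate the dimension by wedging with a contractible $\Delta^n$. Since $\K_n=K\vee\Delta^n$ is homotopy equivalent to $K$, it is acyclic in \emph{all} degrees, so $\susp(\K_n)$ is simply connected and acyclic, hence contractible by Hurewicz--Whitehead, giving $\conn(\susp(\K_n))=+\infty$, strictly stronger than $\geq n$. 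What your construction buys is explicitness and economy: a single concrete $2$-complex works for every $n$, with the acyclicity checked by a $3\times3$ determinant, and no reliance on the existence theorem for homology spheres with perfect $\pi_1$ in every dimension $\geq 3$. What the paper's construction buys is a sharper example in which the gap in~\eqref{eq:conn-susp} is realized with $\conn(\susp(\K))$ \emph{finite and equal to} $n$; this is what is silently used later in the proof of inequality~\eqref{ar3}, where the authors remark that $\widetilde H_i(\K,\Z)=0$ for $i<n$ ``is a consequence of the topological connectivity of $\susp(\K)$ being exactly $n$.'' Your $\K_n$ still satisfies that vanishing (in fact for all $i$), so it would also serve in the later argument, but the parenthetical justification given there would not apply verbatim to your complex. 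One small presentational nit: ``barycentric subdivision'' of the presentation CW-complex is not quite the right operation; what you mean is simply that a finite $2$-dimensional CW-complex can be triangulated, which is standard and suffices.
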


\begin{proof}
According to \cite{kervaire1969smooth, ratcliffe2005some}, for every $n\geq 3$, there is a smooth homology sphere of dimension $n$, whose fundamental group is a non-trivial perfect group. Let $\K$ be any triangulation of such homology sphere (which exists then by Whitehead's theorem~\cite{whitehead1940c1}). It is thus a simplicial complex satisfying the following three properties:
\begin{enumerate}[label=(\roman*)]
\item\label{p} $\K$ is path-connected.
\item\label{pi1} $\pi_1(\K)$ is a non-trivial perfect group.
\item\label{H} $\widetilde{H}_i(\K,\Z)=0$ for $i=0,\ldots,n-1$.
\end{enumerate}
 The simplicial complex $\susp(\K)$ is simply connected because of item~\ref{p} and inequality~\eqref{eq:conn-susp}. We can therefore apply the Hurewicz theorem to $\susp(\K)$. Since $\widetilde{H}_{i+1}(\susp(\K),\Z) = \widetilde{H}_i(\K,\Z)$ for all $i$ (by~\eqref{eq:homo-susp}), we conclude that $\susp(\K)$ is $n$-connected.
\end{proof}

Further, considering the importance of the connectivity of the suspension for the study of box complexes, we provide here a characterization of equality in~\eqref{eq:conn-susp}.

\begin{proposition}\label{prop:susp-perfect}
For any path-connected simplicial complex $\K$, we have 
\[
\conn(\susp(\K)) = \conn(\K) + 1 \quad \Longleftrightarrow \quad  \pi_1(\K) \text{ is trivial or not a perfect group} \, .
\]
\end{proposition}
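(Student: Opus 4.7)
The plan is to separate the equivalence according to whether $\pi_1(\K)$ is trivial, reducing both cases to an application of the Hurewicz theorem to $\susp(\K)$ combined with equality~\eqref{eq:homo-susp}. The key preliminary observation is that $\susp(\K)$ is simply connected: since $\K$ is path-connected, van Kampen applied to the decomposition of the suspension into its two contractible cones $C_+\K$ and $C_-\K$ meeting along $\K$ gives $\pi_1(\susp(\K))=0$. Combined with~\eqref{eq:conn-susp}, this yields $\conn(\susp(\K))\geq\max\{1,\,\conn(\K)+1\}$, so equality in~\eqref{eq:conn-susp} can only fail when $\conn(\K)=0$ and $\conn(\susp(\K))\geq 2$.

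First I would handle the case $\pi_1(\K)=0$. Setting $c:=\conn(\K)\geq 1$ (the case $c=\infty$ being trivial, as $\K$ and hence $\susp(\K)$ are then weakly contractible), Hurewicz applied to $\K$ yields $\pi_{c+1}(\K)\cong \widetilde{H}_{c+1}(\K,\Z)\neq 0$. Iterating~\eqref{eq:homo-susp} gives $\widetilde{H}_i(\susp(\K),\Z)=0$ for $i\leq c+1$ and $\widetilde{H}_{c+2}(\susp(\K),\Z)\neq 0$. Because $\susp(\K)$ is simply connected, a Hurewicz induction shows that it is $(c+1)$-connected and $\pi_{c+2}(\susp(\K))\cong \widetilde{H}_{c+2}(\susp(\K),\Z)\neq 0$; hence $\conn(\susp(\K))=c+1$, so equality in~\eqref{eq:conn-susp} holds.

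Next, for $\pi_1(\K)\neq 0$, equivalently $\conn(\K)=0$, using simple connectedness of $\susp(\K)$ Hurewicz gives $\pi_2(\susp(\K))\cong H_2(\susp(\K),\Z)$, which by~\eqref{eq:homo-susp} equals $H_1(\K,\Z)\cong\pi_1(\K)^{\mathrm{ab}}$. Therefore $\susp(\K)$ is $2$-connected iff $\pi_1(\K)$ is perfect, equivalently $\conn(\susp(\K))=1=\conn(\K)+1$ iff $\pi_1(\K)$ is not perfect. Combining the two cases yields exactly the equivalence of the proposition.

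The only genuinely non-routine ingredient is the identification $\pi_2(\susp(\K))\cong\pi_1(\K)^{\mathrm{ab}}$, which rests on the simple connectedness of $\susp(\K)$ obtained via van Kampen and on~\eqref{eq:homo-susp}. The main point to get right is the case bookkeeping: triviality of $\pi_1(\K)$ forces equality for reasons that come entirely from higher homotopy (independently of perfectness), whereas non-triviality reduces the whole question to whether $\pi_1(\K)$ survives abelianization, so the two clauses ``trivial'' and ``not perfect'' in the statement are the correct---and non-redundant---way to describe the equality case.
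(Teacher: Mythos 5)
Your proof is correct and follows essentially the same route as the paper: establish that $\susp(\K)$ is simply connected, apply the Hurewicz theorem to $\susp(\K)$ together with the suspension isomorphism~\eqref{eq:homo-susp}, and then case-split on $\pi_1(\K)$. The only cosmetic difference is that the paper first packages the Hurewicz argument into the single identity $\conn(\susp(\K)) = \min\{n : \widetilde H_n(\K,\Z)\neq 0\}$ before splitting into three cases, whereas you reason case by case directly; the underlying mathematics is identical.
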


(Actually, this proposition is true for any topological space with the structure of a CW-complex.) If $\K$ is not path-connected, then we always have the equality in~\eqref{eq:conn-susp} (as it can easily be seen by using~\eqref{eq:homo-susp} and the Hurewicz theorem).

\begin{proof}[Proof of Proposition~\ref{prop:susp-perfect}] By inequality~\eqref{eq:conn}, $\susp(\K)$ is simply connected because $\K$ is path-connected. Therefore, the Hurewicz theorem combined with the fact that 
$\widetilde H_{i+1}(\susp(\K),\Z) = \widetilde H_i(\K,\Z)$ for all $i\geq -1$ implies the following equality:
\begin{equation}\label{eq:conn-min}
\conn(\susp(\K)) = {\min}{\left\{n \colon \widetilde H_n(\K,\Z) \neq 0\right\}} \,.
\end{equation}

Suppose now that $\pi_1(\K)$ is trivial. Then by the Hurewicz theorem, the topological connectivity of $\K$ is the smallest $n$, minus $1$, such that $\widetilde H_n(\K,\Z)$ is non-zero. With the help of equality~\eqref{eq:conn-min}, we get the desired equality.

Suppose then that $\pi_1(\K)$ is not perfect. Then the topological connectivity of $\K$ is $0$. By the Hurewicz theorem, $\widetilde H_1(\K,\Z)$ is the abelianization of $\pi_1(\K)$. Since this latter is not a perfect group, $\widetilde H_1(\K,\Z)$ is non-zero. Since $\widetilde H_0(\K,\Z)$ is zero ($\K$ is path-connected), equality~\eqref{eq:conn-min} implies the desired equality.

Suppose now that $\pi_1(\K)$ is a non-trivial perfect group. Then again the topological connectivity of $\K$ is $0$. By the Hurewicz theorem, $\widetilde H_1(\K,\Z)$ vanishes, which implies according to equality~\eqref{eq:conn-min} that $\conn(\susp(\K)) > 1$.
\end{proof}

\subsection{Box complexes of join} Given two graphs $G$ and $H$, their {\em join} $G * H$ is the graph formed from disjoint copies of $G$ and $H$ and by connecting each vertex of $G$ to each vertex of $H$. One of the main new results of this paper, mentioned in the introduction, is Theorem~\ref{thm:join-B} below. It is a result relating the box complex $\B(G * H)$ of the join of two graphs $G$ and $H$ with that of the join of their box complexes $\B(G)$ and $\B(H)$. For $\B_0(\cdot)$, we have even a stronger result, with a simpler proof.

\begin{proposition}\label{prop:join-B0}
For every pair of graphs $G, H$, the complexes $\B_0(G * H)$ and $\B_0(G) * \B_0(H)$ are $\Z_2$-homeomorphic.
\end{proposition}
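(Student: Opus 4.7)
The plan is to show that the natural identification of vertex sets induces a simplicial isomorphism $\B_0(G*H) \cong \B_0(G) * \B_0(H)$, from which $\Z_2$-homeomorphism of the geometric realizations follows at once.

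First, I would set up the vertex identification. Since $V(G*H) = V(G) \uplus V(H)$, the signed vertex set of $\B_0(G*H)$ splits canonically as
\[
V(\B_0(G*H)) = \{\pm v \colon v \in V(G)\} \uplus \{\pm v \colon v \in V(H)\} = V(\B_0(G)) \uplus V(\B_0(H)),
\]
which is precisely the vertex set of the join $\B_0(G) * \B_0(H)$.

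Next, I would analyze the simplices. Pick disjoint subsets $A', A'' \subseteq V(G*H)$ and decompose them along the partition as $A' = A'_G \cup A'_H$ and $A'' = A''_G \cup A''_H$ with the obvious meaning. Because every edge of $G*H$ joining a vertex of $G$ to a vertex of $H$ is present by construction, the only nontrivial constraints in ``$(G*H)[A',A'']$ is complete'' are that $G[A'_G,A''_G]$ and $H[A'_H,A''_H]$ are complete. Combined with the disjointness conditions (which also decouple since $V(G) \cap V(H) = \varnothing$), this yields the equivalence
\[
A' \uplus A'' \in \B_0(G*H) \quad \Longleftrightarrow \quad A'_G \uplus A''_G \in \B_0(G) \text{ and } A'_H \uplus A''_H \in \B_0(H).
\]
Under the vertex identification above, the left-hand side is exactly a simplex of $\B_0(G*H)$ and the right-hand side is exactly (the vertex set of) a simplex $\sigma \cup \tau$ of $\B_0(G) * \B_0(H)$ with $\sigma \in \B_0(G)$ and $\tau \in \B_0(H)$. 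Note that the case where one of $A'_G, A'_H, A''_G, A''_H$ is empty is handled uniformly because $\B_0(\cdot)$ contains the empty simplex and the join allows either factor to be empty.

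Finally, the identity on the vertex set therefore extends to a simplicial isomorphism, hence a homeomorphism of geometric realizations. The $\Z_2$-action on each side is the sign flip $+v \leftrightarrow -v$ applied to every vertex, which is manifestly preserved by the identification, so the homeomorphism is $\Z_2$-equivariant. I do not anticipate any real obstacle here: the content of the statement is just that the completeness condition in the join $G*H$ decouples cleanly, and the argument above pins down exactly why.
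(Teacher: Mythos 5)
Your proof is correct, and it is in fact more direct than the one in the paper. The paper's proof passes through barycentric subdivisions: it shows that $\sd(\B_0(G*H))$ and $\sd(\B_0(G))*\sd(\B_0(H))$ are $\Z_2$-isomorphic by exhibiting explicit mutually inverse simplicial $\Z_2$-maps $f$ and $g$ on the vertex sets of those subdivisions (the vertices being the nonempty simplices of the original complexes). Your argument skips the subdivision entirely: under the canonical identification of $V(\B_0(G*H))$ with $V(\B_0(G))\uplus V(\B_0(H))$, a set $A'\uplus A''$ is a simplex of $\B_0(G*H)$ precisely when its traces $A'_G\uplus A''_G$ and $A'_H\uplus A''_H$ are simplices of $\B_0(G)$ and $\B_0(H)$ respectively, because the disjointness and completeness conditions decouple along the partition $V(G)\uplus V(H)$ and the cross edges in $G*H$ are automatic. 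That exhibits $\B_0(G*H)$ and $\B_0(G)*\B_0(H)$ as the same simplicial complex (identity on vertices), and the $\Z_2$-action $+v\leftrightarrow -v$ is visibly respected. The only thing worth spelling out a bit more is the forward direction of the equivalence, namely that $(G*H)[A',A'']$ complete indeed forces $G[A'_G,A''_G]$ and $H[A'_H,A''_H]$ complete (restriction of a complete bipartite graph to sub-parts), but that is immediate. Both proofs buy the same thing ($\Z_2$-homeomorphism of geometric realizations via simplicial $\Z_2$-isomorphism); yours gets it without introducing the subdivision machinery, so it is the slicker route.
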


\begin{proof}
We prove a more general fact: $\sd(\B_0(G * H))$ and $\sd(\B_0(G)) *\sd(\B_0(H))$ are $\Z_2$-isomorphic. (Given a simplicial complex $\K$, we denote by $\sd(\K)$ its first barycentric subdivision.) 

Let $A'$ and $A''$ be two disjoint subsets of $V(G * H)$ such that $(G*H)[A',A'']$ is a complete bipartite subgraph of $G*H$. Write $A'$ as $A'_G \cup A'_H$ where $A'_G = A' \cap V(G)$ and $A'_H = A' \cap V(H)$, and do similarly for $A''$. For $A'$ and $A''$ not both empty, set
$f(A' \uplus A'') = (A'_G \uplus A''_G) \uplus (A'_H \uplus A''_H)$. It is a simplicial $\Z_2$-map from $\sd(\B_0(G * H))$ to $\sd(\B_0(G)) *\sd(\B_0(H))$. 

Let $B'$ and $B''$ (resp.\ $C'$ and $C''$) be two disjoint subsets of $V(G)$ (resp.\ $V(H)$) such that $G[B',B'']$ (resp.\ $H[C',C'']$) is a complete bipartite subgraph of $G$ (resp.\ $H$). For $B'$, $B''$, $C'$, and $C''$ not all empty, set
$g\big((B'\uplus B'') \uplus (C'\uplus C'')\big) = (B' \uplus C') \uplus (B'' \uplus C'')$. It is a simplicial $\Z_2$-map from $\sd(\B_0(G)) *\sd(\B_0(H))$ to $\sd(\B_0(G * H))$.

It is immediate that $f\circ g = g\circ f= \operatorname{id}$.
\end{proof}

A similar result is not true for the box complex $\B(G)$. (With Theorem~\ref{thm:dec_B0}, this might show that $\B_0(G)$ is a bit more handy than $\B(G)$.) Yet, we have the following weaker version.

\begin{theorem}\label{thm:join-B}
For every pair of graphs $G,H$, the complexes $\B(G * H)$ and ${\susp}{( \B(G) * \B(H))}$ are homotopy equivalent.
\end{theorem}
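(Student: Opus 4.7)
The plan is to route everything through the neighborhood complex, using Theorem~\ref{thm:NB}---which tells us that $\B(K) \simeq \N(K)$ for every graph $K$. Since the join is a homotopy functor, it therefore suffices to prove $\N(G * H) \simeq \susp\bigl(\N(G) * \N(H)\bigr)$. The key structural observation I would start with is that common neighborhoods split cleanly under the join: for any $A \subseteq V(G * H)$, writing $A_G := A \cap V(G)$ and $A_H := A \cap V(H)$, one has
\[
\CN_{G*H}(A) = \CN_G(A_G) \cup \CN_H(A_H),
\]
since in the join every vertex of $G$ is adjacent to every vertex of $H$. Consequently, $A \in \N(G * H)$ if and only if $A_G \in \N(G)$ or $A_H \in \N(H)$.

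Based on this characterization, I would introduce the two subcomplexes
\[
\K_1 := \{A \subseteq V(G*H) : A_G \in \N(G)\} = \N(G) * \triangle(V(H)),
\]
\[
\K_2 := \{A \subseteq V(G*H) : A_H \in \N(H)\} = \triangle(V(G)) * \N(H),
\]
where $\triangle(V)$ denotes the full simplex on $V$. The equivalence above ensures that $\N(G*H) = \K_1 \cup \K_2$, while $\K_1 \cap \K_2 = \N(G) * \N(H)$ by construction. Each $\K_i$ is contractible, being a join with a full simplex. To conclude, I would invoke the standard fact that the union of two contractible subcomplexes is homotopy equivalent to the suspension of their intersection---an instance of the homotopy pushout $\mathrm{pt} \leftarrow \K_1 \cap \K_2 \to \mathrm{pt}$, which computes to the unreduced suspension, with simplicial inclusions automatically being cofibrations so that the strict and homotopy pushouts coincide. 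This yields $\N(G*H) \simeq \susp(\N(G) * \N(H))$, and combining with Theorem~\ref{thm:NB} applied both to $G$ and $H$ and to $G*H$ produces the desired homotopy equivalence.

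The main obstacle I anticipate is bookkeeping for degenerate situations rather than any conceptual difficulty: the conventions for the empty simplex in $\N(K)$ (which belongs to $\N(K)$ if and only if $V(K) \neq \varnothing$, since $\CN_K(\varnothing) = V(K)$), the possibility that $V(G)$ or $V(H)$ is empty, or that one of the graphs is edgeless so that some of the subcomplexes above collapse. These boundary cases each require explicit verification so that the $\K_i$ really are contractible and so that the decomposition $\N(G*H) = \K_1 \cup \K_2$ holds as stated; apart from this, the argument is a clean assembly of the graph-theoretic identity for common neighbors in a join and of standard tools from algebraic topology.
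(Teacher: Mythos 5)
Your proof is correct, and the strategy is a genuine alternative to the paper's, though the two end up exploiting the same underlying decomposition. The paper also reduces to the neighborhood complexes via Theorem~\ref{thm:NB}, but then works at the level of face posets: it defines $\overline Q$ by adjoining two incomparable maximal elements $r,s$ to the face poset $Q$ of $\N(G)*\N(H)$ (so that $\triangle(\overline Q) \cong \susp(\N(G)*\N(H))$), builds a map $\varphi$ sending $A$ to itself when both $\CN_G(A_G)$ and $\CN_H(A_H)$ are nonempty and to $r$ or $s$ when one fails, and invokes Quillen's Fiber theorem after checking that every fiber is contractible. Your two pieces $\K_1, \K_2$ are precisely the preimages $\varphi^{-1}(\overline Q_{\preceq s})$ and $\varphi^{-1}(\overline Q_{\preceq r})$, and the paper establishes their contractibility via a closure operator $A\mapsto A\cup V(G)$ together with a poset-retraction lemma; you instead observe directly that $\K_1 = \N(G)*\triangle(V(H))$ is a join with a nonempty full simplex, hence a cone. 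Packaging the same geometry via the homotopy pushout of two cofibrant inclusions into contractible pieces is more elementary and avoids Quillen's theorem entirely, which is a real simplification; the poset formulation is, on the other hand, the idiom the paper uses consistently and generalizes more readily. Your flagging of the degenerate cases is warranted: when $V(G)=\varnothing$ or $V(H)=\varnothing$ one of your full simplices is void and $\K_i$ need not be contractible (indeed the statement itself degenerates because $\B$ of an empty graph is the void complex), and when one graph is nonempty but edgeless, $\N$ of that graph has only the empty face; both your argument and the paper's implicitly work under the tacit non-degeneracy that makes these constructions behave, so you are not at a disadvantage there.
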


\begin{example}
    Consider the case where $G$ is the complete graph $K_m$ and $H$ the complete graph $K_n$. Then $G*H$ is $K_{m+n}$. 
    
    We have $\B_0(G)$, $\B_0(H)$, and $\B_0(G*H)$ respectively $\Z_2$-homeomorphic to $S^{m-1}$, $S^{n-1}$, and $S^{m+n-1}$ (Example~\ref{ex:box0-complete}). Thus $\B_0(G*H)$ and $\B_0(G)*\B_0(H)=S^{m-1}*S^{n-1}$ are indeed $\Z_2$-homeomorphic, as expected by Proposition~\ref{prop:join-B0}, since they are equal.

    The complexes $\B(G)$, $\B(H)$, and $\B(G*H)$ are homotopy equivalent respectively to $S^{m-2}$, $S^{n-2}$, and $S^{m+n-2}$ (Example~\ref{ex:box-complete}). Thus, $\B(G*H)$ and $\susp(\B(G)*\B(H))$ are indeed homotopy equivalent.
\end{example}

When $H$ is a single vertex, Theorem~\ref{thm:join-B} is a result by Csorba~\cite[Lemma 7.2]{csorba2007homotopy}, who actually proved the stronger $\Z_2$-homotopy equivalence for this special case. We conjecture that the $\Z_2$-homotopy equivalence holds actually for the general case as well.

Note that, by applying $\susp(\cdot)$ on each of $\B(G * H)$ and ${\susp}{( \B(G) * \B(H))}$ in Theorem~\ref{thm:join-B}, we get a weaker version of Proposition~\ref{prop:join-B0}, namely that $\B_0(G * H)$ and $\B_0(G) * \B_0(H)$ are homotopy equivalent. (Here, we see the suspension as the join with $S^0$, and use the commutativity and associativity of the join.)

The main tool of the proof of Theorem~\ref{thm:join-B} is the Fiber theorem of Quillen~\cite[Proposition 1.6]{quillen1978homotopy}, which states that an order preserving map $f \colon P\to Q$ of posets is a homotopy equivalence if $f^{-1}(Q_{\preceq q})$ is
contractible for every $q\in Q$. The notation $Q_{\preceq q}$ stands for the set $\{x\in Q\colon x\preceq q\}$. (An elementary proof of Quillen's Fiber theorem has been given by Barmak~\cite{barmak2011quillen}.)

\begin{proof}[Proof of Theorem~\ref{thm:join-B}]
We actually prove that $\N(G * H)$ and $\susp( \N(G) * \N(H))$ are homotopy equivalent. Since the join operation preserves homotopy equivalence (see, e.g., \cite[Exercise 2, Section 4.2]{matousek2008using}), the desired result will follow then from Theorem~\ref{thm:NB}.

Let $P$ be the face poset of $\N(G * H)$ and $Q$ that of $\N(G) * \N(H)$ (note that face posets do not contain empty faces). Define a new poset $\overline Q$ as $Q \cup \{r,s\}$, where $r$ and $s$ are two incomparable extra elements such that $q \prec r$ and $q \prec s$ for all $q \in Q$. Since the order complex of $\overline Q$ is homeomorphic to the suspension of the order complex of $Q$, the conclusion will follow from the proof that $P$ and $\overline Q$ are homotopy equivalent. This will be done by applying Quillen's Fiber theorem to the following map.

Let $\varphi\colon P \rightarrow \overline Q$ be defined for $A \subseteq V(G*H)$ such that $\CN_{G*H}(A)\neq\varnothing$ by
\[ \varphi(A) = \begin{cases} 
      A & \text{if $\CN_{G}(A\cap V(G))\neq\varnothing$ and $\CN_{H}(A\cap V(H))\neq\varnothing$,}\\
      r & \text{if $\CN_{G}(A\cap V(G))=\varnothing$,}\\ 
       s & \text{if $\CN_{H}(A\cap V(H))=\varnothing$.}
   \end{cases}
\]
We check that $\varphi$ is well defined, namely that 
it is impossible that both $\CN_{G}(A\cap V(G))$ and $\CN_{H}(A\cap V(H))$ are simultaneously empty for $A \subseteq V(G*H)$ such that $\CN_{G*H}(A)\neq\varnothing$. This is clear when $A\cap V(G)$ or $A\cap V(H)$ are empty. (Recall that the common neighbors of the empty set is the full vertex set.) So, suppose that both $A\cap V(G)$ and $A\cap V(H)$ are nonempty. Pick $v$ in $\CN_{G*H}(A)$. Without loss of generality, $v$ is in $V(G)$, and thus $\CN_G(A\cap V(G))$ is nonempty.

The map $\varphi$ is obviously order preserving. In order to use Quillen's Fiber theorem, it remains to check that $D_q \coloneqq {\varphi}^{-1}(\overline Q_{\preceq q})$ is contractible for every $q \in \overline Q$. 

First, consider the case when $q \in Q$. In this case, $q$ is actually a subset $A$ of $V(G * H)$ such that $\CN_{G}(A\cap V(G))\neq\varnothing$ and $\CN_{H}(A\cap V(H))\neq\varnothing$. Since $\varphi(A)=A$, we have $A \in D_A$. Moreover, by definition of $D_A$, any other $B\in D_A$ is such that $\varphi(B) \preceq A$ and thus such that $\varphi(B) = B$, which implies $B \preceq A$. The set $A$ is the unique maximal element of $D_A$, which shows that $D_A$ is a cone and therefore contractible. 

Consider now the case when $q$ is $r$ or $s$. Without loss of generality, we assume that $q$ is $r$. We have already checked that it is impossible that both $\CN_{G}(A\cap V(G))$ and $\CN_{H}(A\cap V(H))$ are simultaneously empty for $A \subseteq V(G*H)$ such that $\CN_{G*H}(A)\neq\varnothing$. It means that if $A$ is in $D_r$, then $\CN_{H}(A\cap V(H))$ is nonempty, and thus $\varphi(A \cup V(G)) = A \cup V(G)$, which implies in particular that $A \cup V(G)$ belongs to $D_r$. In other words, the map $\psi\colon A \mapsto A \cup V(G)$ is an order preserving map $D_r \rightarrow D_r$. Note that $\psi(A) \succeq A$ for every $A$ is $D_r$. In other words, we have $\psi \succeq \operatorname{id}_{D_r}$ for the induced order on the self-maps of $D_r$. A standard result in combinatorial topology says then that the image of $\psi$ is homotopy equivalent to $D_r$ (see, e.g.,~\cite[Proposition 12]{vzivaljevic2005wi}). Since $V(G)$ belongs to $D_r$, it also belongs to the image of $\psi$, and it is its unique minimal element. Thus the image of $\psi$ is contractible and so is $D_r$.

We can therefore apply Quillen's Fiber theorem and we get that $\varphi$ is a homotopy equivalence, which finishes the proof.
\end{proof}

\section{Commenting Figure~\ref{fig}}\label{sec:comment}

\subsection{Arrows}\label{subsec:arrows} In this section, we review every arrow of Figure~\ref{fig}: for each arrow, we provide for the corresponding inequality a proof or a reference to the literature, and discuss the possible gaps.

\begin{proof}[Inequality \eqref{ar1}]
We prove the inequality. Any clique $K_n$ of $G$ induces a homomorphism from $K_n$ to $G$. This homomorphism becomes a simplicial $\Z_2$-map from $\B(K_n)$ to $\B(G)$ when it is read at the level of box complexes. In Example~\ref{ex:box-complete}, we have seen that $\B(K_n)$ is homotopy equivalent to $S^{n-2}$; the same argument actually shows its $\Z_2$-homotopy equivalence with $S^{n-2}$. Therefore $\coind(\B(G)) \geq n-2$.

The gap can be arbitrarily large: the clique number of the Kneser graph $\KG(3k-1,k)$ is $2$ while we have $\conn(\N(\KG(3k-1,k)))+3 = k+1$, as stated at the beginning of Section~\ref{subsec:spec-graph}; Theorem~\ref{thm:NB} shows then that $\conn(\B(\KG(3k-1,k)))+3 = k+1$ as well, which is a lower bound on ${\coind}{(\B(\KG(3k-1,k)))}+2$, established by inequality~\eqref{ar2}. (We have actually ${\coind}{(\B(\KG(3k-1,k)))}+2 = k+1$ by using the upper bound given by the chromatic number.)
\end{proof}

\begin{proof}[Inequality \eqref{ar2}]
The inequality is a special case of inequality~\eqref{a1} of Section~\ref{subsec:gen-rel-par}.

The gap can be arbitrarily large: consider for instance the graph formed by two disjoint copies of $K_{n+2}$; its box complex $\B(G)$ is $\Z_2$-homotopy equivalent to the disjoint union of two $n$-dimensional spheres; it is thus not $0$-connected, while it has a coindex equal to $n$.
\end{proof}

\begin{proof}[Inequality \eqref{ar3}]
The inequality is a direct consequence of Theorem~\ref{thm:susp} combined with inequality~\eqref{eq:conn-susp}.

We prove that the gap can be arbitrarily large. This fact, which has not been emphasized in the literature yet, is another evidence on the better behavior of $\B_0(G)$ for providing efficient lower bounds on the chromatic number.

Fix an integer $n > 5$. Choose any simplicial complex $\K\coloneqq \K_n$ as in
Proposition~\ref{prop:susp}. Consider now $\K\times S^{n+1}$ and equip this space with the $\Z_2$-action that acts on the first component trivially and on the second component as the antipodal action. By Theorem~\ref{thm:csorba}, there is a graph $G$ such that $\B(G)$ and $ \K\times S^{n+1}$ have the same $\Z_2$-homotopy type. Now, by Theorem~\ref{thm:susp}, $\B_0(G)$ and $\susp(\K \times S^{n+1})$ are $ \Z_2 $-homotopy equivalent. So, to finish the proof it is enough to show that the following two equalities hold: 
\begin{equation}\label{eq:KSn}
\conn( \K \times S^{n+1} )=0 \quad \mbox{and} \quad \conn(  \susp(\K \times S^{n+1}) ) \geq n \, .
\end{equation}
The first equality of \eqref{eq:KSn} is a direct consequence of $\K\times S^{n+1} $ being path-connected and $\pi_1( \K\times S^{n+1} )=\pi_1(\K)\times \pi_1( S^{n+1} ) = \pi_1(\K) \neq 0$. To show the second equality, note first that $ \widetilde{H}_i( \K\times S^{n+1}, \Z) = \widetilde {H}_i(\K, \Z)$ for all $i < n$: indeed, the $i$th homology of a CW-complex $X$ depends only on its $(i+1)$-skeleton; considering $S^{n+1}$ as a CW-complex with just two cells (one zero-cell and one $(n+1)$-cell), the $(i+1)$-skeleton of $\K\times S^{n+1}$ is just the $(i+1)$-skeleton of $\K$ when $i < n$. We have $ \widetilde {H}_i(\K, \Z)=0$ for all $i < n$ because $\K$ was chosen like this in the proof of Proposition~\ref{prop:susp} (and anyway it is a consequence of the topological connectivity of $\susp(\K)$ being exactly $n$). As $\widetilde{H}_{i+1}( \susp(\K\times S^{n+1}), \Z  )= \widetilde {H}_i( \K\times S^{n+1}, \Z)$ for all $i\geq 0$, the Hurewicz theorem implies the second equality of \eqref{eq:KSn} ($\susp(\K\times S^{n+1}) $ is simply connected). 
\end{proof}

\begin{proof}[Inequality \eqref{ar4}]
Any $\Z_2$-map between two spaces can be lifted to a $\Z_2$-map between their suspensions. This implies the desired inequality because the suspension of a sphere is a sphere of one dimension higher and because the suspension of $\B(G)$ is $\Z_2$-homotopy equivalent to $\B_0(G)$ (Theorem~\ref{thm:susp}).

Simonyi, Tardos, and~Vre\'cica~\cite{simonyi2009local} have shown that the coindex of any $2$-dimensional compact manifold with even genus is equal to $1$, while the coindex of its suspension is equal to $3$. Theorem~\ref{thm:csorba} implies then that there are graphs for which the gap between $\coind(\B(G))$ and $\coind(\B_0(G))$ can be equal to $2$. In Section~\ref{subsec:Briesk} below, we show the existence of a topological $\Z_2$-space with a gap of $3$ between its coindex and the coindex of its suspension. Since this $\Z_2$-space is actually smooth and compact, it admits a triangulation that is $\Z_2$-equivariant~\cite{illman1978smooth}. There exists therefore a simplicial $\Z_2$-complex with a gap of $3$ between its coindex and the coindex of its suspension, and Theorem~\ref{thm:csorba} implies then that there are graphs for which the gap between $\coind(\B(G))$ and $\coind(\B_0(G))$ can be equal to $3$. Whether this gap can be larger is unknown.
\end{proof}

\begin{proof}[Inequality \eqref{ar5}]
The inequality is a special case of inequality~\eqref{a1} of Section~\ref{subsec:gen-rel-par}.

We prove that the gap can be arbitrarily large. We take the same example as in the proof of inequality~\eqref{ar2}, namely two disjoint copies of a complete graph, which we choose here to be $K_{n+1}$. Denote this graph by $G$. Its box complex $\B_0(G)$ can be described as follows. Let $\partial \Diamond_1^{n+1}$ and $\partial \Diamond_2^{n+1}$ be two disjoint copies of the boundary of the $(n+1)$-dimensional cross-polytope. Pick in each of them two opposite facets: $F_1$ and $F_1'$ for $\partial \Diamond_1^{n+1}$ and $F_2$ and $F_2'$ for $\partial \Diamond_2^{n+1}$. Then $\B_0(G)$ is isomorphic to $\partial \Diamond_1^{n+1} \cup \partial \Diamond_2^{n+1} \cup (F_1 * F_2) \cup (F'_1 * F_2')$. The boundary of an $(n+1)$-dimensional cross-polytope being an $n$-dimensional sphere, the coindex of $\B_0(G)$ is at least $n$. On the other hand, contracting each $F_1*F_2$ and $F_1'*F'_2$ to a point shows that $\B_0(G)$ is homotopy equivalent to two $n$-spheres sharing their North and South poles. It is a standard exercise to check that such space is homotopy equivalent to the wedge of $S^1$ and two disjoint copies of $S^n$.
\end{proof}

\begin{proof}[Inequality \eqref{ar6}]
The equality between $\conn_{\Z_2}(\B(G))+1$ and $\conn_{\Z_2}(\B_0(G))$ is a consequence of equation~\eqref{eq:connZ2} and Theorem~\ref{thm:susp}. The inequality is a special case of inequality~\eqref{a2} of Section~\ref{subsec:gen-rel-par}.

We prove that the gap can be arbitrarily large. Take any simplicial complex $\K$ with $\widetilde H_1(\K,\Z)=\Z_3$ and $\widetilde H_i(\K,\Z)=0$ for $i\neq 1$ (such a simplicial complex exists; see, e.g., \cite[Example 2.40]{hatcher2005algebraic}). Fix a natural number $n\geq 3$ and set 
$X=\K\times S^{n+1}$. Equip this space with the $\Z_2$-action that acts on the first component trivially and on the second component as the antipodal action. The homology groups of $\susp(X)$ over $\Z$ vanishes in every dimension less than $n+1$, except in dimension $2$, in which it is $\Z_3$ (see equation~\eqref{eq:homo-susp}). So, by Hurewicz's theorem, its topological connectivity is at most one. On the other hand, $\widetilde H_i(\susp(X),\Z_2)=0$ for $i\leq n$ by the universal coefficient theorem, which implies $\conn_{\Z_2}(\susp(X)) \geq n$. Now, using Theorem~\ref{thm:csorba}, we are done.
\end{proof}

\begin{proof}[Inequality \eqref{ar7}]
The inequality was proved by Alishahi and Hajiabolhassan~\cite[Theorem 1]{Alishahi2018AGO}. Actually, they proved a stronger result, namely that the {\em altermatic number}---a parameter they introduced, which dominates the colorability defect---is a lower bound on $\coind(\B_0(G))+1$.

The gap can be arbitrarily large as shown by considering the case where $G=K_n$ is the complete graph and its Kneser representation $\HH$ is a graph formed by a perfect matching: in that case $\cd(\HH)=0$ and $\coind(\B_0(G))=n-1$. Yet, choosing $\HH$ as the hypergraph with $n$ vertices and $n$ edges formed by all possible singletons leads to a tight bound. 

Nevertheless, there are graphs for which the gap is arbitrarily large for every Kneser representation. An example is obtained by taking the join of an arbitrary number of $C_5$. Consider the graph $G=C_5^{*n}$.

On the one hand, the colorability defect of any Kneser representation of $C_5$ is at most one. Indeed, in such a Kneser representation, every pair of non-adjacent vertices share a common element (as edges of the Kneser representation) that is not shared by any other vertex; interpreting every such common element as an edge of a new graph between the non-adjacent vertices shows that every Kneser representation contains a $C_5$; removing any edge of this $C_5$ leads to a path, whose edges can be two-colored. Any Kneser representation of $G$ is the disjoint union of $n$ Kneser representations of $C_5$ and thus has colorability defect upper bounded by $n$. 

On the other hand, $\B(C_5)$ is homotopy equivalent to $S^1$ (Example~\ref{ex:box-C2n+1}). Theorem~\ref{thm:join-B} shows then that $\B(G)$ is homotopy equivalent to $S^{3n-2}$. Since the chromatic number of $G$ is $3n$, it makes the bound $\coind(\B(G))+2$ tight, which in turn implies that $\coind(\B_0(G))+1=3n$.
\end{proof}

\begin{proof}[Inequality \eqref{ar8}]
The equality between $\hind(\B(G))+1$ and $\hind(\B_0(G))$ is a consequence of equation~\eqref{eq:hind} and Theorem~\ref{thm:susp}. The inequality is a special case of inequality~\eqref{b1} of Section~\ref{subsec:gen-rel-par}.

The authors believe that the gap can be positive (and even arbitrarily large), but they have not been able to come up with a concrete example.
\end{proof}

\begin{proof}[Inequality \eqref{ar9}]
The inequality is a special case of inequality~\eqref{b2} of Section~\ref{subsec:gen-rel-par}.

We prove that the gap can be arbitrarily large. Consider the space $X$ formed by $S^n$ and two copies of $S^1$: one $S^1$ is attached to the North pole, and one $S^1$ is attached to the South pole. We assume that the $\Z_2$-action is the central symmetry on $S^n$ and exchanges the two $S^1$'s. We have $\hind(X) \geq n$ because $\coind(X) = n$ and $\conn_{\Z_2} (X) = 0$. We finish with Theorem~\ref{thm:csorba}.
\end{proof}

\begin{proof}[Inequality \eqref{ar10}]
The inequality is a special case of inequality~\eqref{c} of Section~\ref{subsec:gen-rel-par}. The gap can be arbitrarily large. Indeed, the odd projective space $\R P^{2n+1}$ can be equipped with a free $\Z_2$-action so that its index is at least $n$~\cite{stolz1989level}, while its cohomological index is $1$~\cite[Theorem 6.6]{conner1960}, and we finish with Theorem~\ref{thm:csorba}.
\end{proof}

\begin{proof}[Inequality \eqref{ar11}]
We always have $\ind(X) \leq \ind(\susp(X)) \leq \ind(X) + 1$ for any topological $\Z_2$-space $X$. The left-hand inequality is immediate from the definition of the index and the right-hand inequality is a consequence of the right inequality in equation~\eqref{eq:coind} (which is by the way also immediate). This explains why inequality~\eqref{ar11} holds, and also shows that the gap can be at most one. There are examples achieving this gap~\cite{csorba2007homotopy}. We finish with Theorem~\ref{thm:csorba}.
\end{proof}

\begin{proof}[Inequality \eqref{ar12}]
The inequality was proved by Csorba et al.~\cite{csorba2004box}. The gap can be arbitrarily large as shown by arbitrary complete bipartite graphs; see Example~\ref{ex:box-bipcomplete}.
\end{proof}

\begin{proof}[Inequality \eqref{ar13}]
The inequality was proved by Simonyi et al.~\cite{simonyi2013colourful}. (Note that it is not a special case of inequality~\eqref{d} of Section~\ref{subsec:gen-rel-par}, because there is an extra barycentric subdivision.) The question on how large the gap can be is completely open.
\end{proof}

\begin{proof}[Inequality \eqref{ar14}]
The inequality was proved by Simonyi et al.~\cite{simonyi2013colourful}. The question on how large the gap can be is completely open.
\end{proof}

\begin{proof}[Inequality \eqref{ar15}]
The inequality is obvious from the definition. The gap can be arbitrarily large as shown by graphs with no $C_4$ ($=K_{2,2}$) and arbitrarily large chromatic number, which exist by Erd\H{o}s's result~\cite{erdos1959graph}.
\end{proof}

For all inequalities there are graphs realizing them as equalities. Indeed, for each of the lower bounds $\omega(G)$, $\cd(\HH)$, and $\conn(\B(G))+3$, there are many graphs for which they are tight (obtained from the collection of perfect graphs or Kneser graphs). Actually, the complete graph is an example for which the three bounds are tight simultaneously (with $\HH$ being the complete $1$-uniform hypergraph).

\subsection{A topological \texorpdfstring{$\Z_2$}{Z2}-space with a gap of three between its coindex and the coindex of its suspension}\label{subsec:Briesk}

The \emph{Brieskorn space} $M_{p,q,r}$ is defined as the intersection of the complex algebraic surface $z_1^p + z_2^q + z_3^r = 0$ with the unit $5$-dimensional sphere $|z_1|^2 + |z_2|^2 + |z_3|^2 = 1$. Here, $p$, $q$, and $r$ should be integer numbers non-smaller than $2$. These spaces are well-studied objects from algebraic geometry. When $p$, $q$, and $r$ are odd, they are naturally equipped with the free $\Z_2$-action inherited from the antipodal action on the unit sphere.

Existence of spaces with the following property was implicitly asked in Section 5 of the already cited paper by Simonyi, Tardos, and Vrecica~\cite{simonyi2009local} to study the gap in inequality~\eqref{ar4}.

\begin{proposition}\label{prop:Briesk}
We have $\coind(\susp(M_{p,q,r})) = 4$ and $\coind(M_{p,q,r})=1$, whenever $p$, $q$, and $r$ are pairwise coprime odd integers larger than $2$. 
\end{proposition}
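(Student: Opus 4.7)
My plan is to dispatch the two equalities in turn, leaning on three structural facts about $M_{p,q,r}$ that follow from the hypotheses: the antipodal involution on $S^5$ restricts to a free $\Z_2$-action on $M_{p,q,r}$ because $p$, $q$, and $r$ are odd; the manifold $M_{p,q,r}$ is an integer homology $3$-sphere by Brieskorn's theorem, since $p$, $q$, and $r$ are pairwise coprime; and because $1/p+1/q+1/r<1$ is automatic for pairwise coprime odd integers larger than $2$ (the smallest such triple being $(3,5,7)$, where $1/3+1/5+1/7<1$), $M_{p,q,r}$ carries a geometric structure of $\widetilde{\operatorname{PSL}}_2(\R)$ type, whose universal cover is diffeomorphic to $\R^3$. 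In particular, $M_{p,q,r}$ is aspherical with torsion-free fundamental group $\Gamma$.

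The suspension side is straightforward. Since $M_{p,q,r}$ is path-connected, $\susp(M_{p,q,r})$ is simply connected; equality~\eqref{eq:homo-susp}, combined with $M_{p,q,r}$ having the integral homology of $S^3$, shows that $\susp(M_{p,q,r})$ has the integral homology of $S^4$. The Hurewicz theorem then yields $\conn(\susp(M_{p,q,r}))=3$, so inequality~\eqref{a1} gives $\coind(\susp(M_{p,q,r}))\geq 4$, and the reverse inequality is the standard bound $\coind(\cdot)\leq\ind(\cdot)\leq\dim(\cdot)=4$.

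For $M_{p,q,r}$ itself, the lower bound $\coind(M_{p,q,r})\geq 1$ follows by joining any point of $M_{p,q,r}$ to its image under the involution by a path, which promotes to a $\Z_2$-equivariant map $S^1\to M_{p,q,r}$. For the upper bound $\coind(M_{p,q,r})\leq 1$, assume for contradiction that there is a $\Z_2$-map $f\colon S^2\to M_{p,q,r}$. Passing to the quotient yields a map $\bar f\colon \R P^2\to M_{p,q,r}/\Z_2$ which classifies the pullback of the double cover $M_{p,q,r}\to M_{p,q,r}/\Z_2$. Since the $\Z_2$-action is free, $M_{p,q,r}/\Z_2$ is a closed $3$-manifold whose universal cover is again $\R^3$, hence is also aspherical. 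Thus $M_{p,q,r}/\Z_2$ is a $K(\Gamma',1)$ with $\Gamma'=\pi_1(M_{p,q,r}/\Z_2)$ fitting into a short exact sequence
\[
1\longrightarrow \Gamma\longrightarrow \Gamma'\longrightarrow \Z_2\longrightarrow 1.
\]
Homotopy classes of maps $\R P^2\to K(\Gamma',1)$ are in bijection with conjugacy classes of elements $\gamma\in\Gamma'$ satisfying $\gamma^2=e$, and the compatibility condition on $\bar f$ as a classifying map forces $\gamma\notin\Gamma$. Such a $\gamma$ would then have order exactly $2$ and would split the exact sequence above.

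The crux is to preclude any such splitting. A splitting would produce a subgroup $\Z_2\subset\Gamma'$ which, via restriction of the free deck action of $\Gamma'$ on the universal cover $\R^3$, acts freely on $\R^3$. But Smith theory forbids free $\Z_2$-actions on $\Z_2$-acyclic spaces: the fixed set of any $\Z_2$-action on such a space is itself $\Z_2$-acyclic and hence nonempty. This contradiction gives $\coind(M_{p,q,r})\leq 1$, completing the computation. The main obstacle I anticipate is presenting cleanly the two black-box inputs—the asphericity of $M_{p,q,r}$ with $\R^3$ as universal cover, and Smith's fixed-point theorem—so that the fundamental-group dichotomy is transparent; once these inputs are in place, the rest of the argument is essentially bookkeeping.
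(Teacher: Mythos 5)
Your proof is correct, and the suspension side ($\coind(\susp(M_{p,q,r}))=4$) follows the paper's argument essentially verbatim: simple connectivity of the suspension, homology of a suspension, Hurewicz, and the dimension bound. For the upper bound $\coind(M_{p,q,r})\leq 1$, though, you take a genuinely different route. Both arguments start from the same input—Milnor's identification of $M_{p,q,r}$ as $\mathsf{G}/\mathsf{\Pi}$ with $\mathsf{G}$ the (contractible) universal cover of $\operatorname{SL}(2,\R)$—but use it differently. The paper extracts from this that $\pi_2(M_{p,q,r})=\pi_3(M_{p,q,r})=0$ (Lemma~\ref{lem:Briesk}) and then, given a hypothetical $\Z_2$-map $S^2\to M_{p,q,r}$, equivariantly extends it up the skeleta ($S^2\to B^3\to S^3\to B^4\to S^4$) to produce a $\Z_2$-map $S^4\to M_{p,q,r}$, contradicting $\coind\leq\dim=3$. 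You instead pass to the quotient: the $\Z_2$-map $S^2\to M_{p,q,r}$ descends to $\bar f\colon\R P^2\to M_{p,q,r}/\Z_2$, the target is a closed aspherical $3$-manifold and hence a $K(\Gamma',1)$, the classification of maps into Eilenberg--MacLane spaces identifies $\bar f$ with a conjugacy class of elements $\gamma\in\Gamma'$ with $\gamma^2=e$, the nontriviality of the pulled-back double cover (it is $S^2\to\R P^2$) forces $\gamma\notin\Gamma$, and Smith theory (equivalently, finiteness of the cohomological dimension of $\Gamma'$) rules out a nontrivial torsion element. Both routes are valid. The paper's hemisphere-gluing argument stays elementary and is tailored to generalize as stated in Remark~\ref{rem:coind-susp} (vanishing homotopy groups in a range); your argument is more structural, replaces the explicit equivariant extension by a group-cohomological obstruction, and would generalize along different lines (for instance to any free $\Z_2$-space whose quotient is a finite-dimensional aspherical complex).
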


We need a preliminary property about Brieskorn spaces $M_{p,q,r}$. This property is actually common knowledge in algebraic topology but we were not able to find any bibliographical reference. For sake of reader's understanding, we write here a complete proof.

\begin{lemma}\label{lem:Briesk}
The second and third homotopy groups of $M_{p,q,r}$ vanish when $1/p + 1/q + 1/r - 1$ is negative.
\end{lemma}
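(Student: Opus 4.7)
The plan is to prove the stronger statement that $M_{p,q,r}$ is aspherical, i.e., that its universal cover is contractible. Since $M_{p,q,r}$ is a closed $3$-manifold, asphericity immediately forces $\pi_n(M_{p,q,r})=0$ for every $n\geq 2$, which is much more than what is needed.

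The first step is to recognize $M_{p,q,r}$ as a Seifert fibered $3$-manifold. The standard way to do this is to exploit the weighted $S^1$-action $\lambda\cdot(z_1,z_2,z_3)=(\lambda^{qr}z_1,\lambda^{pr}z_2,\lambda^{pq}z_3)$ (suitably rescaled so that it preserves the unit $5$-sphere after a diffeomorphism). The action preserves $M_{p,q,r}$, and when $p$, $q$, $r$ are pairwise coprime the orbit space is the $2$-orbifold $S^2(p,q,r)$: the sphere with three cone points of orders $p$, $q$, and $r$, coming from the three orbits with non-trivial isotropy over the points $z_i=0$. This identifies $M_{p,q,r}$ as a Seifert fibered space over $S^2(p,q,r)$.

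The second step is to use the hypothesis. The orbifold Euler characteristic of the base is
\[
\chi^{\mathrm{orb}}\bigl(S^2(p,q,r)\bigr)=-1+\frac{1}{p}+\frac{1}{q}+\frac{1}{r},
\]
which is negative exactly under our assumption. Consequently $S^2(p,q,r)$ is a hyperbolic $2$-orbifold, and a Seifert fibered $3$-manifold over a hyperbolic base orbifold carries Thurston's $\widetilde{\mathrm{SL}}_2(\mathbb{R})$-geometry (see, e.g., Scott's survey on the eight geometries). Its universal cover is therefore $\widetilde{\mathrm{SL}}_2(\mathbb{R})$, which is diffeomorphic to $\mathbb{R}^3$ and in particular contractible. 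Thus $M_{p,q,r}$ is a $K(\pi,1)$, and all higher homotopy groups vanish.

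The main obstacle is making rigorous the identification of $M_{p,q,r}$ with a quotient $\widetilde{\mathrm{SL}}_2(\mathbb{R})/\Gamma$ for a discrete cocompact subgroup $\Gamma$ (a central extension of the triangle group $\Delta(p,q,r)$). The cleanest approach is to quote the Seifert classification together with the fact that the only Seifert fibered $3$-manifolds with negative orbifold Euler characteristic admit the $\widetilde{\mathrm{SL}}_2(\mathbb{R})$-structure; this collapses the argument into a citation. A more self-contained alternative would be to exhibit directly a proper free action of $\Gamma$ on $\widetilde{\mathrm{SL}}_2(\mathbb{R})$ whose quotient is $M_{p,q,r}$, using the classical uniformization of the triangle orbifold by $\mathbb{H}^2$ and lifting the resulting unit-tangent-bundle description to the universal cover; this is longer but elementary, and avoids invoking geometrization.
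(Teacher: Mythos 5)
Your proof is correct and reaches the same core fact as the paper: the paper cites Milnor's result that in this regime $M_{p,q,r}$ is diffeomorphic to $\widetilde{\mathrm{SL}}_2(\mathbb{R})/\mathsf{\Pi}$ for a discrete $\mathsf{\Pi}$, checks that $\widetilde{\mathrm{SL}}_2(\mathbb{R})\cong\mathbb{R}^3$ is contractible, and applies covering-space theory, which is exactly what your argument produces after the Seifert-fibration/orbifold-Euler-characteristic detour. Your route merely unpacks the geometry behind Milnor's citation rather than invoking it directly, so it is essentially the same approach.
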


\begin{proof}
In this case, $M_{p,q,r}$ is diffeomorphic to a coset space of the form $\mathsf{G}/\mathsf{\Pi}$ where $\mathsf{G}$ is the universal cover of $\operatorname{SL}(2,\mathbb R )$ and $\mathsf{\Pi}$ is a discrete subgroup of $\mathsf{G}$~\cite[Section 6]{milnor1975}. As a topological space, $\operatorname{SL}(2,\mathbb R)$ is homeomorphic to $S^1\times\mathbb{R}^2$. (This can be derived quite easily from the Iwasawa decomposition of $\operatorname{SL}(2,\mathbb R)$; see \cite{iwasawa}.) Since the universal cover of $S^1$ is $\mathbb{R}$, the space $\mathsf{G}$ is $\mathbb{R}^3$ and thus contractible. The subgroup $\mathsf{\Pi}$ being discrete, it acts in a properly discontinuous way on $\mathsf{G}$, which implies that the quotient map $\mathsf{G} \rightarrow \mathsf{G}/\mathsf{\Pi}$ is a covering map (see \cite[Proposition 1.40]{hatcher2005algebraic} where the author prefers to use ``covering space action'' instead of ``properly discontinuous action''). This implies in turn that there is a isomorphism between the homotopy groups $\pi_n(\mathsf{G})$ and $\pi_n(\mathsf{G}/\mathsf{\Pi})$ for $n\geq 2$~\cite[Proposition 4.1]{hatcher2005algebraic}. In particular, $\mathsf{G}$ being contractible, the second and third homotopy groups of $M_{p,q,r}$ vanish.
\end{proof}

\begin{proof}[Proof of Proposition~\ref{prop:Briesk}]
To ease the notation, denote by $X$ any Brieskorn space $M_{p,q,r}$ with $p$, $q$, and $r$ pairwise coprime integers larger than $2$. Then $X$ has the following properties:
\begin{enumerate}[label=(\alph*)]
    \item\label{sphere} it is an integral $3$-dimensional homology sphere (see the ``Historical Remarks'' in the introduction of the paper by Milnor~\cite{milnor1975}).
    \item\label{homotop} the homotopy groups $\pi_2(X)$ and $\pi_3(X)$ are $0$ (Lemma~\ref{lem:Briesk}).
\end{enumerate}

We claim that $\coind(X)=1$ and $\conn(\susp(X))\geq 3$, which implies that $\coind(\susp(X) = 4$ (by inequalities~\eqref{a1} and~\eqref{e} from Section~\ref{subsec:gen-rel-par}). 
We finish the proof by showing these two equalities.

 By property~\ref{sphere}, the reduced $0$-dimensional homology of $X$ vanishes, which makes it path-connected. By inequality~\eqref{a1}, we have then $\coind(X) \geq 1$. Assume for a contradiction that $\coind(X) \geq 2$. This implies the existence of a continuous $\Z_2$-map $S^2 \rightarrow X$. Because of property~\ref{homotop}, we can extend this map to a continuous $\Z_2$-map $S^4 \rightarrow X$ (connectivity is used to get a $B^3\rightarrow X$ map that is a $\Z_2$-map on the boundary, which allows then to get a $\Z_2$-map $S^3\rightarrow X$, and then we repeat again this construction for the next dimension), which is not possible because $X$ is $3$-dimensional (we use again inequality~\eqref{e}).

The space $\susp(X)$ is simply connected because $X$ is path-connected (by inequality~\eqref{eq:conn-susp}). Hence, Hurewicz's theorem applies and equality~\eqref{eq:homo-susp} together with property~\ref{sphere} imply that $\conn(\susp(X))\geq 3$. 
\end{proof}

\begin{remark}\label{rem:coind-susp}
Proposition~\ref{prop:Briesk} ensures that there exists a topological $\Z_2$-space with a gap of $3$ between the coindex of its suspension and its coindex. To achieve an arbitrary gap of $k$, the same proof works provided there exists a triangulable topological $\Z_2$-space with
\begin{itemize}
    \item vanishing reduced homology up to dimension $k-1$.
    \item vanishing homotopy groups from dimension $2$ to $k$.
\end{itemize}
We believe that such a space should exist for arbitrarily large $k$.
\end{remark}

\subsection{Absent arrows}

Let two graph parameters be {\em comparable} if for all graphs they are ordered in the same way. Some parameters are not compared in Figure~\ref{fig}. They come in two categories: some pairs of such parameters are really not comparable; for other pairs, the question whether they are comparable or not is still open. The next proposition shows that Figure~\ref{fig} is exhaustive for $\omega(G)$: if there is no path in the figure from $\omega(G)$ to some parameter, this latter is not comparable with $\omega(G)$.

\begin{proposition}\label{prop:uncomp}
None of the parameters $\max_{\HH}\cd(\HH)$ (over all Kneser representations), $\conn(\B(G))+3$, $\conn(\B_0(G))+2$, and $\conn_{\Z_2}(\B(G))+3)$ is comparable with $\omega(G)$. Moreover, each of them can be 
arbitrarily larger than $\omega(G)$ and also arbitrarily smaller (the difference can be arbitrarily large in one direction or the other).
\end{proposition}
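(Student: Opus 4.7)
The plan is to establish incomparability using two uniform families of graphs that simultaneously handle all four parameters.

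For the direction where each parameter exceeds $\omega$ by an arbitrarily large amount, I would take the Kneser graphs $G_k \coloneqq \KG(3k-1,k)$ with $k \geq 2$, exactly as in the discussion of inequality~\eqref{ar1}. Three pairwise disjoint $k$-subsets of $[3k-1]$ would require $3k$ elements, so $\omega(G_k) = 2$. By De Longueville's theorem from Section~\ref{subsec:spec-graph}, $\B(G_k)$ is homotopy equivalent to a wedge of $(k-1)$-spheres; this immediately yields $\conn(\B(G_k))+3 = \conn_{\Z_2}(\B(G_k))+3 = k+1$, and combining with Theorem~\ref{thm:susp} (suspension commuting with wedges of spheres up to homotopy) gives $\conn(\B_0(G_k))+2 = k+1$ as well. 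Finally, the natural Kneser representation $\mathcal{K}^k_{3k-1}$, whose colorability defect was already computed to be $k+1$ in the example following the definition of $\cd$, witnesses $\max_\HH \cd(\HH) \geq k+1$. Each parameter therefore exceeds $\omega(G_k)=2$ by $k-1$, which is unbounded.

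For the opposite direction, I would take $H_n \coloneqq K_n \sqcup K_n$ with $n \geq 3$, so $\omega(H_n) = n$. Since every biclique is contained in a single connected component, $\B(H_n) = \B(K_n) \sqcup \B(K_n) \simeq S^{n-2} \sqcup S^{n-2}$ by Example~\ref{ex:box-complete}. This space is disconnected, so $\conn(\B(H_n)) = \conn_{\Z_2}(\B(H_n)) = -1$, giving the value $2$ for both $+3$ parameters. For $\B_0$, Theorem~\ref{thm:susp} produces $\susp(S^{n-2} \sqcup S^{n-2})$, namely two $(n-1)$-spheres glued at the two suspension poles; a standard van Kampen--style argument (unglue at one pole to get $S^{n-1} \vee S^{n-1}$, then identify two points in a path-connected space to add an $S^1$) shows this is homotopy equivalent to $S^{n-1} \vee S^{n-1} \vee S^1$, whose fundamental group $\mathbb{Z}$ forces $\conn(\B_0(H_n))+2 = 2$. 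All three topological parameters are thus bounded by $2$, while $\omega(H_n) = n$.

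The most delicate step is showing $\max_\HH \cd(\HH) = 0$ for every Kneser representation $\HH$ of $H_n$. Such a representation provides edges $U_1,\ldots,U_n$ and $V_1,\ldots,V_n$ corresponding to the two cliques: the $U_i$'s are pairwise disjoint, the $V_j$'s are pairwise disjoint, and $U_i \cap V_j \neq \varnothing$ for all $i,j$. The last condition excludes $|U_i|=1$, since a singleton $U_i=\{x\}$ would force $x \in V_j$ for every $j$, contradicting pairwise disjointness of the $V_j$'s for $n \geq 2$; symmetrically $|V_j| \geq 2$. I would then pick $a_i \in U_i \cap V_i$ for each $i$ (automatically distinct, as the $U_i$'s are disjoint), color $\{a_1,\ldots,a_n\}$ red and every other vertex of $V(\HH)$ blue. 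Each $U_i$ meets the red set exactly in $\{a_i\}$ (since $a_k \in U_k$ with $U_k \cap U_i = \varnothing$ for $k \neq i$) and contains at least one blue vertex, and the same argument applies symmetrically to the $V_j$'s. Hence $\HH$ is $2$-colorable, $\cd(\HH)=0$, so $\max_\HH \cd(H_n) = 0$ and the gap with $\omega(H_n) = n$ is unbounded, completing the proof.
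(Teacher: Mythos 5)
Your proposal is correct, and for three of the four parameters it takes the same route as the paper: Kneser graphs with clique number $2$ for the ``larger'' direction (the paper uses $\KG(3k,k+1)$, you use $\KG(3k-1,k)$, an inessential shift), and disjoint unions of two complete graphs for the ``smaller'' direction of $\conn(\B(G))+3$, $\conn(\B_0(G))+2$, and $\conn_{\Z_2}(\B(G))+3$. The genuine divergence is in showing that $\max_{\HH}\cd(\HH)$ can be arbitrarily smaller than $\omega(G)$. The paper switches to a second graph family, $C_5^{*n}$, whose clique number is $2n$ while its colorability defect is bounded by $n$ for every Kneser representation (via a structural analysis of Kneser representations of $C_5$ together with the box-complex computation of inequality~\eqref{ar7}). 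You instead keep $K_n\sqcup K_n$ and prove directly that \emph{every} Kneser representation of it is $2$-colorable, so the colorability defect is exactly $0$: using that the $U_i$'s (resp.\ $V_j$'s) are pairwise disjoint and every $U_i$ meets every $V_j$, you deduce $|U_i|,|V_j|\geq 2$, pick representatives $a_i\in U_i\cap V_i$, and two-color accordingly; disjointness guarantees each edge then has both colors. This is a correct and clean argument, and it unifies the ``smaller'' direction under a single graph family, whereas the paper needs two. (Minor note: the paper states the disjoint-union family gives a ``bound of $3$'' for the topological parameters, whereas your computation giving $2$ for all three is the accurate value; the discrepancy does not affect either proof since only boundedness matters.)
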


\begin{proof} For each parameter, we give a first family that shows that it can be made arbitrarily larger, and then we give a second family that shows that it can be made arbitrarily smaller.

$\max_{\HH}\cd(\HH)$: for Kneser graphs $\KG(3k,k+1)$ (see Section~\ref{subsec:spec-graph}), this bound is tight (equal to $k$) while their clique number is $2$; in the proof of inequality~\eqref{ar7}, it has been shown that $C_5^{*n}$ has $\max_{\HH}\cd(\HH))$ at most $n$ for every Kneser representation, while its clique number is equal to $2n$.

$\conn(\B(G))+3$, $\conn(\B_0(G))+2$, and $\conn_{\Z_2}(\B(G))+3$: again, for Kneser graphs $\KG(3k,k+1)$, these bounds are tight (equal to $k$, see Section~\ref{subsec:spec-graph}) while their clique number is $2$; however, disjoint union of two complete graphs gives a bound of $3$ while the clique number is arbitrarily high.
\end{proof}

One of the messages of Proposition~\ref{prop:uncomp} is that none of $\omega(G)$ and $\conn(\B(G))+3$ provides a better bound on the chromatic number than the other. This must however be mitigated: it is known that ``generically'' the clique number provides a better bound. This is for instance formalized by a result due to Kahle~\cite{kahle2007neighborhood}: 
while the clique number of the Erd\H{o}s--R\'enyi graph $G(n,1/2)$ is almost surely asymptotically equivalent to $2\log_2(n)$, the value of $\conn(\B(G(n,1/2)))/\log_2(n)$ lies almost surely asymptotically between $1$ and $4/3$. (The result is stated for the neighborhood complex, which is equivalent because of Theorem~\ref{thm:NB}.) It is worth noticing that the bound $b(G)$ is used in that paper to get the probabilistic upper bound on the connectivity of the neighborhood complex. Anyway, the bound provided by $\omega(G)$ is very weak for $G(n,1/2)$ because, by a celebrated result by Bollob\'as~\cite{bollobas1988chromatic}, the chromatic number of this latter is almost surely asymptotically equivalent to $n/(2\log_2(n))$.

Regarding parameters $b(G)$ and $\chi(G)$: complete bipartite graphs show that $b(G)$ can be arbitrarily larger than $\chi(G)$; and $C_4$-free graphs with high chromatic number (see proof of inequality~\eqref{ar15} where such graphs have already been used) show that $\chi(G)$ can be arbitrarily larger than $b(G)$. Unfortunately, for the other pairs that are not compared, we were only able to obtain partial answers, like the following ones.

\begin{proposition}\label{prop:conn-coind}
Neither $\conn(\B_0(G))$, nor $\conn_{\Z_2}(\B_0(G))$ is comparable with $\coind(\B(G))$. Moreover, each of them can be 
arbitrarily smaller than $\coind(\B(G))$.
\end{proposition}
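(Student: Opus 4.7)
The plan is to establish the two claims in turn by explicit constructions. For the ``arbitrarily smaller'' assertion, I would reuse the family $G_n$ consisting of two disjoint copies of $K_{n+1}$ already analyzed in the proof of inequality~\eqref{ar5}. The box complex $\B(G_n)$ decomposes as two disjoint copies of $\B(K_{n+1}) \simeq_{\Z_2} S^{n-1}$, each carrying its own antipodal $\Z_2$-action (the $\Z_2$-action on $\B(G_n)$ does not swap components, because no simplex bridges them), so the inclusion of one copy yields a $\Z_2$-map $S^{n-1} \to \B(G_n)$, giving $\coind(\B(G_n)) = n-1$ (with the matching upper bound coming from $\dim \B(G_n) = n-1$). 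Meanwhile, the proof of~\eqref{ar5} already exhibits $\B_0(G_n)$ as homotopy equivalent to $S^1 \vee S^n \vee S^n$; since this space has non-vanishing $\widetilde H_1$ with both $\Z$ and $\Z_2$ coefficients, one gets $\conn(\B_0(G_n)) = \conn_{\Z_2}(\B_0(G_n)) = 0$ for $n \geq 2$, and letting $n \to \infty$ makes the gap with $\coind(\B(G_n))$ arbitrarily large.

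For the non-comparability claim, I still need graphs $G$ with $\conn(\B_0(G)) > \coind(\B(G))$, and similarly for $\conn_{\Z_2}(\B_0(G))$. Here I would appeal to the Brieskorn manifold $M = M_{p,q,r}$ with $p, q, r$ pairwise coprime odd integers strictly greater than $2$: Proposition~\ref{prop:Briesk} yields $\coind(M) = 1$, and its proof establishes en route that $\conn(\susp M) \geq 3$. Invoking Illman's theorem (as in the discussion of~\eqref{ar4}) furnishes a $\Z_2$-equivariant triangulation of the free $\Z_2$-manifold $M$; Csorba's universality Theorem~\ref{thm:csorba} then produces a graph $G$ with $\B(G) \simeq_{\Z_2} M$; and Theorem~\ref{thm:susp} identifies $\B_0(G)$ with $\susp M$ up to $\Z_2$-homotopy. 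Therefore $\coind(\B(G)) = 1$ while $\conn(\B_0(G)) \geq 3$, hence also $\conn_{\Z_2}(\B_0(G)) \geq 3$ by inequality~\eqref{a2}. Combined with the first family, this establishes non-comparability for both parameters.

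The main obstacle is the second step: we need a triangulable free $\Z_2$-space whose suspension is strictly more connected than the space itself has coindex, and this requires a sufficiently exotic construction. The Brieskorn manifolds are tailor-made for exactly this purpose, so once Section~\ref{subsec:Briesk} is in place the proposition reduces to the elementary calculation for $K_{n+1} \sqcup K_{n+1}$ together with a direct citation of the Brieskorn construction via Csorba's theorem. Everything else is routine homology and dimension bookkeeping.
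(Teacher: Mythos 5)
Your proposal is correct and close in spirit to the paper's proof; the key difference is in how the ``arbitrarily smaller'' direction is handled. The paper takes a space $Y$ built from $S^n$ with two copies of $S^1$ attached at the poles (the $\Z_2$-action swapping the circles), computes $\coind(Y)=n$ and $\conn_{\Z_2}(\susp(Y))=1$, and then invokes Csorba's Theorem~\ref{thm:csorba} to get a graph. You instead reuse the concrete graph $G_n = K_{n+1}\sqcup K_{n+1}$ already analyzed for inequality~\eqref{ar5}, where $\B(G_n)$ is two disjoint copies of a $\Z_2$-sphere $S^{n-1}$ (and, as you correctly observe, the $\Z_2$-action preserves components) while $\B_0(G_n)\simeq S^1\vee S^n\vee S^n$. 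This bypasses Csorba's universality theorem entirely for this direction and is a bit more elementary, at the cost of slightly different numbers ($\coind = n-1$, $\conn = \conn_{\Z_2}=0$ versus the paper's $n$ and $1$); both yield the same arbitrarily large gap. The Brieskorn step for the other direction of non-comparability is identical to the paper's. One small slip: you write that the upper bound $\coind(\B(G_n))\leq n-1$ ``comes from $\dim\B(G_n)=n-1$,'' but $\dim\B(G_n)=n$ (the boundary of an $(n+1)$-cross-polytope minus two facets is $n$-dimensional); the correct reason is that $\B(G_n)$ is $\Z_2$-homotopy equivalent to two disjoint $\Z_2$-spheres $S^{n-1}$, and a $\Z_2$-map from a sphere $S^d$ lands in one component. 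This does not affect the argument, since only the lower bound $\coind(\B(G_n))\geq n-1$ is needed.
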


\begin{proof}
We are going to see that there exists a (triangulable) topological $\Z_2$-space $X$ such that $\coind(X)=1$ and $\conn(\susp(X)) = 3$, and a (triangulable) topological $\Z_2$-space $Y$ such that $\coind(Y)= n$ and $\conn_{\Z_2} (\susp(Y)) = 1$. As usual, this provides then the desired conclusion with Theorem~\ref{thm:csorba}.

Such a space $X$ was actually described in the proof of Proposition~\ref{prop:Briesk} since we established there that, for some values of $p,q,r$, the coindex of the Brieskorn space $M_{p,q,r}$ is $1$, while the connectivity of its suspension is $3$ (see proof of Proposition~\ref{prop:Briesk}).

The space $Y$ is formed by $S^n$ and two copies of $S^1$: one $S^1$ is attached to the North pole, and one $S^1$ is attached to the South pole. We assume that the $\Z_2$-action is the central symmetry on $S^n$ and exchanges the two $S^1$'s. We have $\coind(Y)= n$ and $\conn_{\Z_2} (\susp(Y)) = 1$.
\end{proof}

It is plausible that each of $\conn(\B_0(G))$ and $\conn_{\Z_2}(\B_0(G))$ can also be arbitrarily larger than $\coind(\B(G))$; this would actually be a consequence of the existence of complexes as described in Remark~\ref{rem:coind-susp}. Proposition~\ref{prop:conn-coind} shows that $\conn_{\Z_2}(\B_0(G))+2$ can be smaller than $\coind(\B_0(G))+1$. Whether it can be larger is open. \label{page:coind-conn2}

\begin{question}\label{q:conncoindB0}
     Do there exist graphs for which $\conn_{\Z_2}(\B_0(G))+2$ is larger than $\coind(\B_0(G))+1$?
\end{question}

There are also several questions about the location of $\cd(\HH)$ with respect to the other topological bounds below the Borsuk--Ulam boundary. The most intriguing is probably the relation of $\cd(\HH)$ and $\conn(\B(G))+3$. The following result provides a partial answer.

\begin{proposition}
    The bound $\max_{\HH}\cd(\HH)$ can be arbitrarily smaller than $\conn(\B(G))+3$.
\end{proposition}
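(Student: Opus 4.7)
The plan is to reuse the family $G_n = C_5^{*n}$ already analyzed in the discussion of inequality~\eqref{ar7}: the upper bound $\max_{\HH}\cd(\HH)\leq n$ established there applies, while the connectivity $\conn(\B(G_n))+3$ turns out to equal $3n$, so the gap between the two quantities grows linearly in $n$.

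First I would recall the homotopy type of $\B(G_n)$. Starting from $\B(C_5)\simeq S^1$ (Example~\ref{ex:box-C2n+1}), I would iterate Theorem~\ref{thm:join-B} together with the classical identities $S^a * S^b \simeq S^{a+b+1}$ and $\susp(S^c)\simeq S^{c+1}$ to obtain, by induction on $n$, the homotopy equivalence $\B(G_n)\simeq S^{3n-2}$---exactly the computation already performed for inequality~\eqref{ar7}. In particular $\conn(\B(G_n)) = 3n-3$, so $\conn(\B(G_n))+3 = 3n$.

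Next I would invoke the two ingredients of the colorability-defect bound from the proof of inequality~\eqref{ar7}: (i) $\cd(\HH_0)\leq 1$ for every Kneser representation $\HH_0$ of $C_5$, since such a representation contains a $5$-cycle formed by the private common elements of non-adjacent pairs, and deleting any one of its vertices produces a $2$-colorable hypergraph; and (ii) every Kneser representation of a join $G_1*G_2$ splits as a disjoint union of Kneser representations of $G_1$ and of $G_2$, because adjacency of every vertex of $G_1$ with every vertex of $G_2$ forces the corresponding hyperedge supports to be disjoint, and $\cd$ is additive over such disjoint unions. Iterating~(ii) on $G_n=C_5*\cdots*C_5$ and applying~(i) to each factor yields $\cd(\HH)\leq n$ for every Kneser representation $\HH$ of $G_n$.

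Combining the two estimates produces a gap $\conn(\B(G_n))+3 - \max_{\HH}\cd(\HH) \geq 3n-n = 2n$, which tends to $+\infty$ with $n$, proving the proposition. The only mildly delicate step is the decomposition of Kneser representations of a join claimed in~(ii); this is however already implicit in the treatment of inequality~\eqref{ar7}, so the proof essentially amounts to a careful bookkeeping of facts already established there.
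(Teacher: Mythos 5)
Your proposal is correct and follows essentially the same route as the paper: it reuses the family $G_n=C_5^{*n}$ and the two facts already established in the discussion of inequality~\eqref{ar7}, namely that every Kneser representation of $G_n$ has colorability defect at most $n$ and that $\B(G_n)$ is homotopy equivalent to $S^{3n-2}$, hence $\conn(\B(G_n))+3=3n$. The only difference is expository: you spell out the inductive application of Theorem~\ref{thm:join-B} and the splitting of Kneser representations across joins in slightly more detail than the paper, which simply points back to the earlier computation.
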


\begin{proof}
    For $G=C_5^{*n}$, we have seen in the proof of inequality~\eqref{ar7} that $\max_{\HH}\cd(C_5^{*n})$ is equal to $n$ and that $\B(G)$ is homotopy equivalent to $S^{3n-2}$, which implies that $\conn(\B(G))+3$ is equal to $3n$. 
\end{proof}

However, we have the following open question.\label{page:cdB}

\begin{question}\label{q:cdB}
    Do there exist graphs for which $\cd(\HH)$ is larger than $\conn(\B(G))+3$?
\end{question}

Note that the similar questions with $\conn(\B_0(G))+2$ and $\conn_{\Z_2}(\B_0(G))+2$ in place of $\conn(\B(G))+3$ are also open.

\section{Complementary remarks}\label{sec:compl}

\subsection{Further topological bounds}

The Hom complex $\Hom(K_2,G)$ can be alternatively defined as the poset of ``multihomomorphisms'' from $K_2$ to $G$, and replacing $K_2$ by another arbitrary graph $T$ provides another Hom complex $\Hom(T,G)$; see, e.g.,~\cite[Chapter 5, Section 5.9]{matousek2008using} or~\cite[Chapter 2, Section 2.3]{de2012course} for details. (More insight on the structure of Hom complexes is given by Dochtermann~\cite{Dochtermann09}.) Bj\"orner and Lov\'asz conjectured that $\conn(\Hom(T,G))+\chi(T)+1$ is a lower bound on $\chi(G)$ (remember that $\B(G)$ is homotopy equivalent to the order complex of $\Hom(K_2,G)$---Theorem~\ref{thm:BH}---and thus the conjecture generalizes the bound based on the connectivity of $\B(G)$).

This conjecture generated a lot of activity. Hoory and Linial~\cite{hoory2005counterexample} showed that the conjecture cannot be true for all graphs $T$, and Babson and Kozlov~\cite{babson2007proof} established it when $T$ is an odd cycle or a complete graph. This latter result has been considered as a breakthrough in topological combinatorics. Simplifications of the proof and complementary results were found soon after by {\v Z}ivaljevi\'c~\cite{zivaljevic2005parallel}, Schultz~\cite{schultz2009graph}, and Kozlov~\cite{kozlov2006cobounding}. Anyway, Schultz proved that we always have $\hind(\B(G))+2 \geq \hind(\Hom(C_{2r+1},G))+3$ for any $r$, which shows that box complexes are not yet outdated by more general Hom complexes.

Hom complexes have been recently extended to directed graphs by Dochtermann and Singh~\cite{dochtermann2023homomorphism}. They show how methods from equivariant topology can be used to provide obstructions to digraph homomorphism, for example by considering Hom complexes from a directed 3-cycle (the classical case of $K_2$ is recovered as a directed $2$-cycle). They leave it as an open problem whether these results could be fruitfully applied to naturally arising questions in directed graph theory.

\subsection{Open questions}\label{subsec:open} We collect here the main open questions met in the survey.

\subsubsection*{Improving Csoba's construction} Is it possible to improve Csorba's construction, which shows that every free $\Z_2$-simplicial complex is $\Z_2$-homotopy equivalent to a certain $\B(G)$? See Section~\ref{subsec:univers}, p.\pageref{page:csorba}.

\subsubsection*{Box complex of $s$-stable Kneser graphs} Not much is known about the topological nature of the box complex of $s$-stable Kneser graphs. Generalizing Theorem~\ref{thm:schultz} is a challenging and natural question, whose answer could be beneficial to the conjecture on the chromatic number of $s$-stable Kneser graphs (Section~\ref{subsec:spec-graph}, p.\pageref{page:sstable}). 

\subsubsection*{Decidability of topological parameters} Which of the parameters $\coind(\B(G))$, $\coind(\B_0(G))$, $\ind(\B(G))$, and $\ind(\B_0(G))$ are decidable? Which are not?
See Section~\ref{subsec:dec-compl}, p.\pageref{page:dec}.

\subsubsection*{Computational complexity of parameters}
 What is the complexity status of the computation of the parameters of Figure~\ref{fig}? Apart for $\chi(G)$, $b(G)$, $\cd(\HH)$, and $\omega(G),$ which are \NP-hard, the complexity status is open. The combinatorial parameters $\zig(G)$ and $\Xind(\Hom(K_2,G))$ might be appealing challenges. See Section~\ref{subsec:dec-compl}, p.\pageref{page:compl}.

\subsubsection*{Computational complexity and the Borsuk--Ulam theorem}
 Regarding complexity questions, maybe more fundamental than those mentioned above is Question~\ref{q:kneser} (Section~\ref{subsec:BU}, p.\pageref{q:kneser}), asking whether proving that Kneser graphs have chromatic number $n-2k+2$ is as hard as establishing the Borsuk--Ulam theorem.

\subsubsection*{Hedetniemi's conjecture beyond the chromatic number} For some parameters of Figure~\ref{fig}, a Hedetniemi-type relation is known to hold. But it is still open for several of them. This is Question~\ref{q:hedet} of Section~\ref{sec:product}, p.\pageref{page:hedet-paramG}. 

Moreover, similar questions can be asked for topological parameters attached to topological spaces, without any reference to graphs. See Section~\ref{sec:product}, p.\pageref{page:hedet-param-top}.

\subsubsection*{Gap between $\coind(\B(G))$ and $\coind(\B_0(G))$ and generalized Brieskorn spaces}
Brieskorn spaces offer examples of free $\Z_2$-spaces with coindex equal to $1$ and with the coindex of their suspension equal to $4$ (Proposition~\ref{prop:Briesk}). They are used to show that the gap in inequality~\eqref{ar4} can be at least $2$. Generalizations of Brieskorn spaces, as suggested in Remark~\ref{rem:coind-susp}, p.\pageref{rem:coind-susp}, would show the existence of a space with coindex $1$ and arbitrarily large connectivity of its suspension, and would show in turn that the gap in inequality~\eqref{ar4} can be arbitrarily large (a question that is still open).

\subsubsection*{Coindex and connectivity mod 2 for suspensions} It is not known whether $\conn_{\Z_2}(\B_0(G))+1$ can be larger than $\coind(\B_0(G))$; see Question~\ref{q:conncoindB0}, p.\pageref{page:coind-conn2}. This question can be formulated in a pure topological way: is there any topological $\Z_2$-space $X$ for which $\conn_{\Z_2}(\susp(X))+1$ is larger than $\coind(\susp(X))$?

\subsubsection*{Colorability defect and coindex} Whether the colorability defect lower bound can be sometimes better than $\coind(\B(G))+2$ is open; see Question~\ref{q:cdB}, p.\pageref{page:cdB}. Note that a positive answer would also contribute to the question about the gap between $\coind(\B(G))$ and $\coind(\B_0(G))$.

\bibliographystyle{amsplain}
\bibliography{box-complex-survey}

\appendix

\section{Proof of Theorem~\ref{thm:complex-coind}}

We first establish two technical lemmas. We denote the $d$-dimensional cross-polytope by $\Diamond^d$ (notation already met in Section~\ref{subsec:BU} when we proved the Borsuk--Ulam theorem from the inequality $\conn(\B(G))+3\leq\chi(G)$).

\begin{lemma}\label{lem:bk}
There exists a (polynomially computable) simplicial $\Z_2$-map $\sd(\B(K_{d+1})) \rightarrow \partial \Diamond^d$ (encoded as a Boolean $\Z_2$-circuit). 
\end{lemma}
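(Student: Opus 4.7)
The plan is to build $\phi : \sd(\B(K_{d+1})) \to \partial \Diamond^d$ explicitly, vertex by vertex. Recall from Example~\ref{ex:box-complete} that $\B(K_{d+1})$ is the boundary of the $(d+1)$-dimensional cross-polytope $\partial \Diamond^{d+1}$ with the two opposite open facets $F_+ = \{+1,\dots,+(d+1)\}$ and $F_- = \{-1,\dots,-(d+1)\}$ removed. Its underlying space is a ``cylinder'' $S^{d-1} \times [0,1]$ and deformation retracts onto the equatorial subcomplex $\partial \Diamond^d$, which sits inside $\B(K_{d+1})$ as the subcomplex on vertices $\{\pm 1,\dots,\pm d\}$. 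The single barycentric subdivision on the domain provides enough flexibility to realize this retraction as a simplicial map.

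The rule assigns to each non-empty simplex $(A',A'')$ of $\B(K_{d+1})$ (equivalently, each vertex of $\sd(\B(K_{d+1}))$) a signed index $\epsilon \cdot k \in \{\pm 1,\dots,\pm d\}$, with sign $\epsilon$ determined by whether a certain distinguished element of $(A' \cup A'') \cap [d]$ lies in $A'$ or in $A''$; this makes $\phi(A'',A') = -\phi(A',A'')$ by construction, yielding $\Z_2$-equivariance automatically. The two exceptional vertices $(\{d+1\},\varnothing)$ and $(\varnothing,\{d+1\})$, for which $(A' \cup A'') \cap [d]$ is empty, need to be assigned by a separate definition compatible with the generic rule.

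Once the rule is stated, three verifications must be carried out: (i) $\Z_2$-equivariance, immediate from construction; (ii) simpliciality, that is, for every chain $(A'_1,A''_1) \prec \cdots \prec (A'_r,A''_r)$ of non-empty simplices of $\B(K_{d+1})$, the image $\{\phi(A'_i,A''_i)\}$ forms a simplex of $\partial \Diamond^d$---equivalently, contains no antipodal pair $\{+k,-k\}$---done by case analysis on how chains interact with the rule; and (iii) polynomial computability, which holds because the rule consists only of elementary operations (membership tests, minimums, comparisons of cardinalities) on the $O(d)$-bit encodings of $A'$ and $A''$, and $\Z_2$-equivariance of the resulting Boolean circuit can be enforced syntactically by building it from a core sub-circuit together with its negation on the swapped input.

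The main obstacle is designing the rule so that simpliciality survives along chains passing through the exceptional vertices $(\{d+1\},\varnothing)$ and $(\varnothing,\{d+1\})$. The most natural rule---``sign times the minimum of $(A'\cup A'') \cap [d]$'', with any fixed fallback $+k_0$ or $-k_0$ at the exceptional vertices---cannot work: for any choice $+k_0$ of $\phi(\{d+1\},\varnothing)$, the chain $(\{d+1\},\varnothing) \prec (\{d+1\},\{k_0\})$ produces the antipodal pair $\{+k_0,-k_0\}$, and the analogous chain $(\{d+1\},\varnothing) \prec (\{d+1,k_0\},\varnothing)$ rules out $-k_0$. A more subtle rule---likely using a secondary criterion involving $|A'|,|A''|$ or a case split based on whether $d+1 \in A' \cup A''$---is therefore needed, tailored to guarantee that the sign component of $\phi$ is monotonous enough along chains to avoid such conflicts.
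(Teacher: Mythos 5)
Your geometric framing is right: $\B(K_{d+1})$ is $\partial\Diamond^{d+1}$ with the all-positive and all-negative facets removed, the target is the equatorial $\partial\Diamond^d$, and your analysis of why the naive ``signed minimum of $(A'\cup A'')\cap[d]$'' rule fails on chains through $(\{d+1\},\varnothing)$ is correct and well argued. But the proposal stops exactly where the proof must start. You observe that ``a more subtle rule is needed'' and float some candidate ingredients ($|A'|$, $|A''|$, a case split on whether $d+1\in A'\cup A''$), but you never write down a rule, and the simpliciality verification --- which is the entire content of the lemma --- is never carried out. As it stands this is a correct diagnosis that the problem is nontrivial, not a proof.

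The paper's construction is a Tucker/Fan-style sign-change count with a parity twist, which is genuinely different from anything you sketch. Identify the vertices of $\B(K_{d+1})$ with $\{\pm 1,\ldots,\pm(d+1)\}$ and set $\mu(v)=(-1)^{v}v$ (flip the sign of the odd-indexed vertices). For a simplex $\sigma$, list its vertices in increasing order of absolute value, let $k-1$ be the number of sign changes in the $\mu$-sequence, and let $s$ be the sign of its first term; define $\lambda(\sigma)=s\,e_k$. The $\mu$-twist is the key idea you are missing: it swaps the role of the two \emph{forbidden} facets (all-positive, all-negative) with the two \emph{fully alternating} facets, so after the twist only the forbidden facets would yield $k=d+1$, and every simplex actually in $\B(K_{d+1})$ satisfies $k\in\{1,\ldots,d\}$. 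Simpliciality then follows from a monotonicity argument: inserting a vertex into a $\mu$-sequence cannot decrease the sign-change count, and if the inserted vertex changes the sign of the first term it must strictly increase the count, so along any chain $\sigma\subseteq\sigma'$ either $k$ strictly increases or $s$ is preserved --- never an antipodal pair. Equivariance is automatic because negating all of $\sigma$ negates every $\mu$-value, leaving the sign-change count intact and flipping $s$; polynomial computability is clear. Without the $\mu$-twist or an equivalent device that handles the two missing facets, a rule built purely from minima, cardinalities, or membership of $d+1$ will keep running into the obstruction you already identified.
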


\begin{proof}
    Identify the vertices of $K_{d+1}$ with the integers in $\{1,2,\ldots,d+1\}$. This way, the vertices of $\B(K_{d+1})$ are identified with the integers in $\{\pm 1,\pm 2,\ldots,\pm (d+1)\}$. For a vertex $v$ of $\B(K_{d+1})$, set $\mu(v) \coloneqq (-1)^vv$.

    Consider now a simplex $\sigma$ of $\B(K_{d+1})$ and order its vertices in the increasing order of their absolute values: $v_0,v_1,\ldots,v_{\dim\sigma}$. Since no simplex of $\B(K_{d+1})$ can have vertices of opposite values, there is no ambiguity in the definition of this ordering. Let $k$ be the number of sign changes in the sequence $\mu(v_0),\mu(v_1),\ldots,\mu(v_{\dim\sigma})$, plus one. Let $s$ be the sign of $\mu(v_0)$. Then, set $\lambda(\sigma)\coloneqq s e_k$ (where the $e_i$ are the unit vector of the standard basis of $\R^d$). By definition of $\B(K_{d+1})$, the integer $k$ is in $\{1,\ldots,d\}$. The map $\lambda$ is obviously a $\Z_2$-map, and it is simplicial $\sd(\B(K_{d+1})) \rightarrow \partial \Diamond^d$ because when $\sigma \subseteq \sigma'$ are two simplices of $\B(K_{d+1})$, the definition of $k$ and $s$ makes that $\lambda(\sigma)$ and $\lambda(\sigma')$ cannot be mapped to opposite vectors.
\end{proof}

\begin{lemma}\label{lem:tech}
There exists a (polynomially computable) piecewise affine arithmetic $\Z_2$-circuit $h\colon \R^{V(K_{d+1})}\rightarrow \R^d$ such that we have $\|z\|_{\infty} \leq d \|h(z)\|_{\infty}$ for those $z\in \R^{V(K_{d+1})}$ whose components are neither all positive, nor all negative.
\end{lemma}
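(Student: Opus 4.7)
The plan is to extend the simplicial $\Z_2$-map $\lambda\colon\sd(\B(K_{d+1}))\to\partial\Diamond^d$ of Lemma~\ref{lem:bk} to all of $\R^{V(K_{d+1})}$ by piecewise-linear interpolation over the barycentric subdivision of the cross-polytope $\partial\Diamond^{d+1}\subset\R^{V(K_{d+1})}$, and then to read off the inequality from the simplicial property of $\lambda$. First, I would identify $V(K_{d+1})$ with $\{1,\ldots,d+1\}$, so that $+v$ (resp.\ $-v$) becomes $e_v$ (resp.\ $-e_v$) in $\R^{d+1}$. Given $z\in\R^{d+1}\setminus\{0\}$, sort the coordinates in decreasing order of absolute value as $|z_{v_0}|\geq|z_{v_1}|\geq\cdots$ (ties broken arbitrarily), let $k$ be the largest index with $z_{v_k}\neq 0$, set $s_i=\operatorname{sgn}(z_{v_i})$, and form the chain $\tau_i=\{s_0v_0,\ldots,s_iv_i\}$ for $i=0,\ldots,k$. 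Extend $\lambda$ to the two facets $\{+1,\ldots,+(d+1)\}$ and $\{-1,\ldots,-(d+1)\}$ (the only simplices of $\partial\Diamond^{d+1}$ outside $\B(K_{d+1})$) by declaring $\lambda\coloneqq 0$ on both, and define
\[
h(z)\coloneqq\sum_{i=0}^{k}(i+1)\bigl(|z_{v_i}|-|z_{v_{i+1}}|\bigr)\,\lambda(\tau_i),
\]
with the conventions $|z_{v_{k+1}}|=0$ and $h(0)\coloneqq 0$.

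The core step is to establish the norm bound when the components of $z$ are neither all positive nor all negative. In that case $\tau_k$ contains both a $+$- and a $-$-vertex, and since $K_{d+1}$ is complete the condition $\CN(A)\neq\varnothing$ is equivalent to $A\subsetneq V(K_{d+1})$; hence each $\tau_i\subseteq\tau_k$ belongs to $\B(K_{d+1})$, so every $\lambda(\tau_i)$ is a genuine vertex of $\partial\Diamond^d$. Setting $a_i\coloneqq(i+1)(|z_{v_i}|-|z_{v_{i+1}}|)\geq 0$, a telescoping computation will give $\sum_{i=0}^{k}a_i=\|z\|_1$. The decisive step is to exploit that $\lambda$ is simplicial: the vertices $\lambda(\tau_0),\ldots,\lambda(\tau_k)$ span a common face of $\partial\Diamond^d$, and such a face cannot contain two antipodal points $\pm e_j$. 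Hence for each coordinate $j$ all contributions to $h(z)_j$ carry the same sign, and the pigeonhole principle over the $d$ axes yields
\[
\|h(z)\|_\infty=\max_{1\leq j\leq d}\sum_{i\colon\lambda(\tau_i)\in\{\pm e_j\}}a_i\geq\frac{1}{d}\sum_{i=0}^{k}a_i\geq\frac{\|z\|_\infty}{d},
\]
which is the claimed inequality.

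The remaining verifications should be routine. The map $h$ will be $\Z_2$-equivariant because negating $z$ flips every $s_i$, replaces each $\tau_i$ by its $\Z_2$-image, and $\lambda$ is a $\Z_2$-map; it will be well defined at ties $|z_{v_i}|=|z_{v_{i+1}}|$ because a tie-breaking swap only alters the simplex $\tau_i$, whose coefficient $a_i$ vanishes; it will be piecewise linear on each cone determined by a fixed sign pattern of $z$ and a fixed ordering of $(|z_v|)_v$, hence continuous globally; and it will be implementable as a polynomial-size arithmetic $\Z_2$-circuit by composing a sorting network (built from $\max$, $\min$, and linear operations) with the polynomially computable circuit for $\lambda$ provided by Lemma~\ref{lem:bk}. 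The main conceptual obstacle is the no-cancellation argument: it is precisely the simplicial nature of $\lambda$---that a face of $\partial\Diamond^d$ never contains two antipodal vertices---which produces the factor $d$ in the inequality.
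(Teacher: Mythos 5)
Your $h$ is exactly the paper's construction made explicit: the formula $h(z)=\sum_{i=0}^{k}(i+1)(|z_{v_i}|-|z_{v_{i+1}}|)\lambda(\tau_i)$ is precisely the degree-one homogeneous extension of the affine extension of the paper's $\lambda'$ over the barycentric subdivision of $\partial\Diamond^{d+1}$ (this is the standard barycentric-coordinate expansion), and your no-cancellation/pigeonhole argument is exactly the reason behind the bound $\|y\|_\infty\geq 1/d$ for $y\in\partial\Diamond^d$ that the paper invokes, so the approach is the same. One small slip worth tidying: when $z$ has some zero coordinates and all nonzero ones share a sign, $\tau_k$ does \emph{not} contain vertices of both signs as you assert, but each $\tau_i$ still lies in $\B(K_{d+1})$ because then $|\tau_k|\leq\#\{v\colon z_v\neq 0\}<d+1$, so neither part of any $\tau_i$ can equal all of $V(K_{d+1})$; with that case handled, the proof is correct.
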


\begin{proof}
    The simplicial complex $\sd(\B(K_{d+1}))$ is a subcomplex of $\sd\partial\Diamond^{d+1}$. Let $\K$ be the simplicial complex $\{0\} * \partial\Diamond^d$ (which can be seen as the triangulation of $\Diamond^d$ obtained by adding a vertex at its center). We extend $\lambda$ defined by Lemma~\ref{lem:bk} to a circuit $\lambda'$ defined on $\sd(\partial\Diamond^{d+1})$ by setting $\lambda'(v) \coloneqq 0$ on the two vertices $v$ of $\sd\partial\Diamond^{d+1}$ that are not in $\sd(\B(K_{d+1}))$ (namely, the vertices corresponding to the facets containing the all-positive and all-negative points). The circuit $\lambda'$ is a simplicial $\Z_2$-map $\sd(\partial\Diamond^{d+1}) \rightarrow \K$.

    Now, for $z \in \R^{V(K_{d+1})} \setminus\{0\}$, set $h(z) \coloneqq \|z\|_{\infty}\lambda'(z/\|z\|_{\infty})$ (where $\lambda'$ is identified with its affine extension), and set $h(0)\coloneqq 0$. This map $h$ is a piecewise affine $\Z_2$-map $\R^{V(K_{d+1})}\rightarrow \R^d$, which can be encoded as a (polynomially computable) arithmetic $\Z_2$-circuit. Since $\|y\|_{\infty} \geq \frac 1 d$ for all points $y \in \partial \Diamond^d$, the conclusion follows.
\end{proof}

\begin{proof}[Proof of Theorem~\ref{thm:complex-coind}] We establish the two reductions.

\begin{easylist}\ListProperties(Style1*=\scshape$\bullet$, Style2*=$-$, Hide=2, Indent=0.5cm, Space1=0.4cm, Space1*=0.4cm, Space2=0.2cm, Space2*=0.2cm)

# {\em Reduction of \textsc{Coind-lower-bound} to \textsc{$\varepsilon$-Borsuk--Ulam}}

## {\em Transforming the instances.}
Consider an instance of \textsc{Coind-lower-bound}. Denote by $c$ the coloring of $G$, encoded as a Boolean $\Z_2$-circuit $V(G) \rightarrow V(K_{d+1})$. Whether $c$ is proper can be checked in polynomial time. If it is not, we are done, and to formally satisfy the prescriptions of a reduction, we build any instance for \textsc{$\varepsilon$-Borsuk--Ulam}. If it is a proper coloring, then proceed as follows. For $y=\sum_{v\in V(G)}y_ve_v$, set $\bar c(y)\coloneqq  \sum_{v\in V(G)}y_ve_{c(v)}$. This gives rise to an arithmetic $\Z_2$-circuit $\bar c\colon \R^{V(G)} \rightarrow \R^{V(K_{d+1})}$. Set then $f \coloneqq h \circ \bar c \circ g$ and $\varepsilon\coloneqq \delta/d$. This way we get an instance of \textsc{$\varepsilon$-Borsuk--Ulam}.

## {\em Transforming the solutions.} If the coloring is not proper, then any monochromatic edge is returned, independently of the solution of \textsc{$\varepsilon$-Borsuk--Ulam} (type-\eqref{item:monoc} solution). Otherwise, consider a solution $x \in S^d$ of \textsc{$\varepsilon$-Borsuk--Ulam}. We have $\|f(x)-f(-x)\|_{\infty} \leq \varepsilon$, i.e., $\|f(x)\|_{\infty}\leq \frac 1 2 \varepsilon$ since $f$ is a $\Z_2$-map by construction. Write $g(x)$ as $\sum_{v\in V(G)}y_ve_v$. Set $\sigma\coloneqq \{s\cdot v\colon s\in \{+,-\},\, v \in V(G),\, s y_v >\delta/(2q)\}$, with $q=|V(G)|+1$. If $\sigma$ is not a simplex of $\B(G)$, then $x$ is a type-\eqref{item:non-embed} solution. If $\sigma$ is a simplex of $\B(G)$, then proceed as follows. For each $u \in V(K_{d+1})$, set $z_u \coloneqq \sum_{v\in c^{-1}(u)}y_v$. Because $c$ is a proper coloring, we actually have $|z_u| \geq  \sum_{v\in c^{-1}(u)\colon 2q|y_v|>\delta}|y_v| - \frac 1 2 \delta$, 
which implies $\|z\|_{\infty}+\frac 1 2 \delta \geq \|y\|_{\infty}$. Since $f(x)=h(z)$, we get $\|h(z)\|_{\infty} \leq \frac 1 2 \varepsilon$, which in turn leads to $\|z\|_{\infty} \leq\frac 1 2 \delta$, with Lemma~\ref{lem:tech} (since $\sigma$ is a simplex of $\B(G)$, the components of $z$ are neither all positive, nor all negative). We have thus $\|y\|_{\infty} \leq \delta$ and therefore $\|g(x)\|_{\infty} \leq \delta$, which means that $x$ is a type-\eqref{item:zero} solution.

# {\em Reduction of \textsc{$\varepsilon$-Borsuk--Ulam} to \textsc{Coind-lower-bound}}

## {\em Transforming the instances.} Consider an instance of \textsc{$\varepsilon$-Borsuk--Ulam}. Set $G \coloneqq K_{d+1}$ (complete graph with $d+1$ vertices), and color its vertices properly with $d+1$ colors. Set $g\colon x\in\R^{d+1} \mapsto \imath(f(-x) - f(x)) \in \R^{V(K_{d+1})}$, where $\imath\colon\R^d \xhookrightarrow{}\R^{V(K_{d+1})}$ (by identifying $\R^{V(K_{d+1})}$ with $\R^{d+1}$), and $\delta  \coloneqq \varepsilon$. This way we get an instance of \textsc{Coind-lower-bound}.

## {\em Transforming the solutions.} Consider a solution of \textsc{Coind-lower-bound}. It cannot be a type-\eqref{item:monoc} solution. It cannot be a type-\eqref{item:non-embed} solution either by property of the range of $\imath$. Thus it is a type-\eqref{item:zero} solution $x\in S^d$, such that $\|g(x)\|_{\infty}\leq \varepsilon$. Since $\imath$ is non-expansive, we have $\|f(-x)-f(x)\|_{\infty}\leq \varepsilon$, as desired.\qedhere
\end{easylist}
\end{proof}
\end{document}